\begin{document}

\title{ On the iteratively regularized Gauss--Newton method in Banach spaces with
applications to parameter identification problems
%\thanks{Grants or other notes
%about the article that should go on the front page should be
%placed here. General acknowledgments should be placed at the end of the article.}
}
%\subtitle{Do you have a subtitle?\\ If so, write it here}

%\titlerunning{Short form of title}        % if too long for running head

\author{Qinian Jin   \and Min Zhong %\and Bernd Hofmann     %etc.
}

%\authorrunning{Short form of author list} % if too long for running head

\institute{Qinian Jin \at
Mathematical Sciences Institute,
Australian National University, Canberra, ACT 0200, Australia\\
%Department of Mathematics, Virginia Tech, Blacksburg, VA 24060, USA\\
\email{Qinian.Jin@anu.edu.au}\\
%              Tel.: +123-45-678910\\
%              Fax: +123-45-678910\\
%             \emph{Present address:} of F. Author  %  if needed
\\
Min Zhong \at
School of Mathematical Sciences, Fudan University, Shanghai 200433, China\\
\email{09110180007@fudan.edu.cn}\\
%              Tel.: +123-45-678910\\
%              Fax: +123-45-678910\\
%             \emph{Present address:} of F. Author  %  if needed
%\\
%Bernd Hofmann \at
%Fakult\"{a}t f\"{u}r Mathematik, Technische Universit\"{a}t Chemnitz,
%09107 Chemnitz, Germany \\
%\email{bernd.hofmann@mathematik.tu-chemnitz.de}
}

%\date{Received: date / Accepted: date}
% The correct dates will be entered by the editor

\newtheorem{Assumption}{Assumption}[section]
\newtheorem{Rule}{Rule}[section]
\newtheorem{Test}{Test}[section]

\def\l{\langle}
\def\r{\rangle}
\def\a{\alpha}
\def\b{\beta}
\def\d{\delta}
\def\la{\lambda}
\def\p{\partial}
\def\vep{{\mathcal E}}
\def\R{{\mathcal R}}

\def\N{\mathcal N}
\def\R{\mathcal R}
\def\D{\mathcal D}
\def\X{\mathcal X}
\def\Y{\mathcal Y}
\def\B{\mathcal B}
\def\A{\mathcal A}

\maketitle

\begin{abstract}
%\keywords{}

In this paper we propose an extension of the iteratively regularized Gauss--Newton method
to the Banach space setting by defining the iterates via convex optimization problems. We
consider some a posteriori stopping rules to terminate the iteration and present the detailed
convergence analysis. The remarkable point is that in each convex optimization problem
we allow non-smooth penalty terms including $L^1$ and total variation (TV) like penalty functionals.
This enables us to reconstruct special features of solutions such as sparsity and discontinuities
in practical applications. Some numerical experiments on parameter identification in partial
differential equations are reported to test the performance of our method.

\subclass{65J15 \and 65J20 \and 47H17}
\end{abstract}

%\catcode`@=11 \@addtoreset{equation}{chapter}
\def\theequation{\thesection.\arabic{equation}}
\catcode`@=12

\section{\bf Introduction}
\setcounter{equation}{0}

Inverse problems arise from many practical applications whenever one searches for unknown
causes based on observation of their effects. A characteristic property of inverse problems
is their ill-posedness in the sense that their solutions do not depend continuously on the data.
Due to errors in the measurements, in practical applications one never has the exact data;
instead only noisy data are available. Therefore, how to use the noisy data to produce a stable
approximate solution is an important topic.

We are interested in solving nonlinear inverse problems in Banach spaces which
can be formulated as the nonlinear operator equation
\begin{equation}\label{1.1}
F(x)=y,
\end{equation}
where $F: D(F)\subset \X\mapsto \Y$ is a nonlinear operator between two Banach spaces $\X$ and $\Y$
with domain $D(F)\subset \X$. We will use the same notation $\|\cdot \|$ to denote the norms of $\X$
and $\Y$ which should be clear from the context.  Let $y^\d$ be the only available
approximate data to $y$ satisfying
\begin{equation}\label{1.3}
\|y^\d-y\|\le \d
\end{equation}
with a given small noise level $\delta> 0$. Due to the ill-posedness, regularization methods should be
employed to produce from $y^\d$ a stable approximate solution.

When both $\X$ and $\Y$ are Hilbert spaces and $F$ is Fr\'{e}chet differentiable, a lot of regularization methods have been developed
during the last two decades, see \cite{BK04,Jin2011,Jin2012,JT09,KNS2008} and the references therein.
The iteratively regularized Gauss--Newton method is one of the well known methods and it takes
the form (\cite{B92})
$$
x_{n+1}^\d=x_n^\d- \left(\a_n  +F'(x_n^\d)^* F'(x_n^\d) \right)^{-1}
\left(F'(x_n^\d)^*(F(x_n^\d)-y^\d) +\a_n (x_n^\d-x_0)\right),
$$
where $F'(x)$ denotes the Fr\'{e}chet derivative of $F$ at $x$, $F'(x)^*$ denotes the adjoint
of $F'(x)$, $x_0^\d:=x_0$ is an initial guess, and $\{\a_n\}$ is a sequence of positive numbers satisfying
\begin{equation}\label{1.5}
\a_n>0, \qquad 1\le \frac{\a_n}{\a_{n+1}}\le \theta \qquad \mbox{and}\qquad \lim_{n\rightarrow\infty} \a_n=0
\end{equation}
for some constant $\theta>1$. When terminated by the discrepancy principle, the regularization property of
the iteratively regularized Gauss--Newton method has been studied extensively, see \cite{JT09,KNS2008} and
references therein. It is worthwhile to point out that $x_{n+1}^\d$ is the unique minimizer of the quadratic functional
\begin{equation}\label{4.1.1}
\|y^\d-F(x_n^\d) -F'(x_n^\d) (x-x_n^\d)\|^2 +\a_n \|x-x_0\|^2 \qquad \mbox{over } \X.
\end{equation}

Regularization methods in Hilbert spaces can produce good results
when the sought solution is smooth. However, because such methods have a tendency to over-smooth solutions,
they may not produce good results in applications where the sought solution has special features such as sparsity
or discontinuities. In order to capture the special features, the methods in Hilbert spaces must be modified
by incorporating the information of some adapted penalty functionals such as the $L^1$ and the total
variation (TV) like functionals, for which the theories in Hilbert space setting are no longer applicable.
On the other hand, due to their intrinsic features, many inverse problems are more natural to formulate in Banach
spaces than in Hilbert spaces. Therefore, it is necessary to develop regularization methods to solve inverse
problems in the framework of Banach spaces with general penalty function.

In this paper we will extend the iteratively regularized Gauss--Newton method to the Banach space setting.
Motivated by the variational formulation (\ref{4.1.1}) in Hilbert spaces, it is natural to use convex optimization
problems to define the iterates. To this end, we take a proper, lower semi-continuous, convex function
$\Theta: \X\to (-\infty, \infty]$ whose sub-differential is denoted as $\p \Theta$. By picking an initial guess
$x_0\in D(F)\cap D(\Theta)$ and $\xi_0\in \p \Theta(x_0)$, we define
\begin{equation}\label{4.5.3}
x_{n+1}^\d:=\arg \min_{x\in \X} \left\{ \|y^\d-F(x_n^\d)-F'(x_n^\d) (x-x_n^\d)\|^p
+\a_n D_{\xi_0} \Theta (x, x_0)\right\}
\end{equation}
where $1\le p<\infty$, $x_0^\d=x_0$, and $D_{\xi_0} \Theta(x,x_0)$ denotes the Bregman distance induced by $\Theta$
at $x_0$ in the direction $\xi_0$. When $\Theta(x)=\|x-x_0\|^p$ and $\xi_0=0$, this method has been considered
in \cite{KH2010} under essentially the nonlinearity condition
\begin{equation}\label{4.5.2}
\|(F'(x)-F'(z)) h\| \le \kappa \|F'(z) (x-z)\|^{1/2} \|F'(z) h\|^{1/2}
\end{equation}
with the iteration terminated by an a priori stopping rule. It turns out that (\ref{4.5.2}) is difficult
to verify for nonlinear inverse problems, and the restriction of $\Theta$ to the special choice
may prevent the method from capturing the special features of solutions. Moreover, since a priori
stopping rules depend crucially on the unknown source conditions, it is useless in practical applications.
In this paper we will develop a convergence theory on the iteratively regularized Gauss--Newton method
in Banach spaces with general convex penalty function $\Theta$. We will propose some a posteriori stopping rules,
including the discrepancy principle, to terminate the method and give detailed convergence analysis
under reasonable nonlinearity conditions.

This paper is organized as follows. In section \ref{Sect2} we give some preliminary facts on convex analysis.
In section \ref{Sect3} we then formulate the iteratively regularized Gauss--Newton method in Banach spaces and
propose some a posteriori stopping rules. We show that the method is well-defined and obtain a weak convergence
result. In section \ref{Sect4} we derive the rates of convergence when the solution satisfies certain
source conditions formulated as variational inequalities. In section \ref{Sect5} we prove a strong convergence
result without assuming any source conditions when $\Y$ is a Hilbert spaces and $\Theta$ is a $2$-convex function,
which is useful for sparsity reconstruction and discontinuity detection. Finally, in section \ref{Sect6} we
present some numerical experiments to test our method for parameter identification in partial differential equations.

\section{\bf Preliminaries}\label{Sect2}
\setcounter{equation}{0}

Let $\X$ be a Banach space with norm $\|\cdot\|$. We use $\X^*$ to denote its dual space. Given $x\in \X$
and $\xi \in \X^*$ we write $\l \xi, x\r=\xi (x)$ for the duality pairing. If $\Y$ is another Banach space
and $A: \X\to \Y$ is a bounded linear operator, we use $A^*: \Y^*\to \X^*$ to denote its adjoint, i.e.
$\l A^* \zeta, x\r=\l \zeta, A x\r$ for any $x\in \X$ and $\zeta\in \Y^*$.

Let $\Theta: \X \to (-\infty, \infty]$ be a convex function. We use $D(\Theta):=\{x\in \X: \Theta(x)<+\infty\}$
to denote its effective domain. We call $\Theta$ proper if $D(\Theta)\ne \emptyset$. Given $x\in \X$ we define
$$
\p \Theta(x):=\{\xi\in \X^*: \Theta(z)-\Theta(x)-\l \xi, z-x\r \ge 0 \mbox{ for all } z\in \X\}
$$
which is called the subgradient of $\Theta$ at $x$. It is clear that $\p \Theta(x)$ is convex and closed in $\X^*$ for
each $x\in \X$. The multi-valued mapping $\p \Theta: \X\to 2^{\X^*}$ is called the subdifferential of $\Theta$.
It could happen that $\p \Theta(x)=\emptyset$ for some $x\in D(\Theta)$. We set
$$
D(\p\Theta):=\{x\in D(\Theta): \p \Theta(x)\ne \emptyset\}.
$$
For $x \in D(\p \Theta)$ and $\xi\in \p \Theta(x)$
we define
$$
D_\xi \Theta(z,x):=\Theta(z)-\Theta(x)-\l \xi, z-x\r, \qquad \forall z\in \X
$$
which is called the Bregman distance induced by $\Theta$ at $x$ in the direction $\xi$. Clearly $D_\xi \Theta(z,x)\ge 0$.
By direct calculation we can see that
\begin{equation}\label{4.3.1}
D_\xi \Theta(x_2,x)-D_\xi \Theta(x_1, x) =D_{\xi_1} \Theta(x_2,x_1) +\l \xi_1-\xi, x_2-x_1\r
\end{equation}
for all $x, x_1, x_2\in D(\p \Theta)$, $\xi\in \p \Theta(x)$, and $\xi_1\in \p \Theta(x_1)$.

A proper function $\Theta: \X\to (-\infty, \infty]$ is said to be $p$-convex for some $p\ge 2$
if there is a constant $C_0>0$ such that for all $x, z\in \X$ and $\la\in (0,1)$ there holds
$$
\Theta(\la z+(1-\la)x) +C_0 \la (1-\la) \|z-x\|^p \le \la \Theta(z) +(1-\la) \Theta(x).
$$
It can be shown that $\Theta$ is $p$-convex if and only if there is a constant $\gamma>0$ such that
\begin{equation}\label{pconv}
\|z-x\| \le \gamma \left[D_\xi \Theta(z,x)\right]^{\frac{1}{p}}
\end{equation}
for all $z\in \X$, $x\in D(\p \Theta)$ and $\xi\in \p \Theta(x)$.

For a proper, lower semi-continuous, convex function $\Theta: \X \to (-\infty, \infty]$ we can
define its Fenchel conjugate
$$
\Theta^*(\xi):=\sup_{x\in \X} \left\{\l\xi, x\r -\Theta(x)\right\}, \quad \xi\in \X^*.
$$
It is well known that $\Theta^*$ is also proper, lower semi-continuous, and convex. If, in addition,
$\X$ is reflexive, then $\xi\in \p \Theta(x)$ if and only if $x\in \p \Theta^*(\xi)$. When $\Theta$ is $p$-convex
satisfying (\ref{pconv}) with $p\ge 2$, it follows from \cite[Corollary 3.5.11]{Z2002} that $D(\Theta^*)=\X^*$, $\Theta^*$ is Fr\'{e}chet
differentiable and its gradient $\nabla \Theta^*: \X^*\to \X$ satisfies
\begin{equation}\label{3.29.1}
\|\nabla \Theta^*(\xi)-\nabla \Theta^*(\eta) \|\le \gamma^{\frac{p}{p-1}} \|\xi-\eta\|^{\frac{1}{p-1}},
\quad \forall \xi, \eta\in \X^*.
\end{equation}

Many examples of $p$-convex  functions can be provided by functions of the norms in $p$-convex Banach spaces.
We say a Banach space $\X$ is $p$-convex with $p\ge 2$ if there is a positive constant $c_p$ such that
$\d_{\X}(\varepsilon)\ge c_p \varepsilon^p$ for all $0\le \varepsilon \le 2$, where
$$
\d_{\X} (\varepsilon): =\inf \left\{ 2-\|x+z\|: x, z\in \X, \, \|x\|=\|z\|=1 \mbox{ and } \|x-z\|\ge \varepsilon\right\}
$$
is the modulus of convexity of $\X$. According to a characterization of uniform convexity of Banach spaces in
\cite{XR91}, it is easy to see that, for any $x_0\in \X$, the functional
$$
\Theta(x):=\|x-x_0\|^p
$$
is $p$-convex and its subgradient at $x$ is given by $\p \Theta(x) =p J_p(x-x_0)$, where $J_p: \X \to 2^{\X^*}$
denotes the duality mapping of $\X$ with gauge function $t\to t^{p-1}$ which is defined for each $x\in \X$ by
$$
J_p(x):=\left\{\xi\in \X^*: \|\xi\|=\|x\|^{p-1} \mbox{ and } \l \xi, x\r=\|x\|^p\right\}.
$$
The sequence spaces $l^q$, the Lebesgue spaces $L^q$, the Sobolev spaces $W^{k,q}$ and the Besov spaces $B^{s,q}$
with $1<q<\infty$ are the most commonly used function spaces that are $\max\{q,2\}$-convex (\cite{Adams,C1990}).

Given a proper, lower semi-continuous, $p$-convex function $\Theta$ on $\X$, we can produce such new functions
$\Theta':=\Theta +\Psi$ by adding any available proper, lower semi-continuous, convex functions $\Psi$ to $\Theta$.
In this way, we can construct non-smooth $p$-convex functions that can be used to detect special features
of solutions when solving inverse problems. For instance, let $\X=L^2(\Omega)$, where $\Omega\subset {\mathbb R}^N$
is a bounded domain in ${\mathbb R}^N$. It is clear that the functional
$$
x\to \int_\Omega |x(\omega)|^2 d\omega
$$
is $2$-convex on $L^2(\Omega)$. By adding the function $\int_\Omega |x(\omega)| d\omega$ to the multiple of the above function we can
obtain the $2$-convex function
$$
\Theta_1(x):=\lambda \int_\Omega |x(\omega)|^2 d\omega +\int_\Omega |x(\omega)| d\omega
$$
with small $\lambda>0$ which is useful for sparsity recovery (\cite{T96}). Similarly, we may produce on $L^2(\Omega)$
the $2$-convex function
$$
\Theta_2(x):= \lambda \int_\Omega |x(\omega)|^2 d\omega +\int_\Omega |D x|,
$$
where $\int_\Omega|D x|$ denotes the total variation of $x$ over $\Omega$ that is defined by (\cite{G84})
$$
\int_\Omega |D x| :=\sup\left\{ \int_\Omega x \mbox{div} \varphi d\omega: \varphi\in C_0^1(\Omega; {\mathbb R}^N) \mbox{ and }
\|\varphi\|_{L^\infty(\Omega)}\le 1\right\}.
$$
This functional is useful for detecting the discontinuities, in particular, when the solutions are piecewise-constant (\cite{ROF92}).

\section{\bf The method and its weak convergence}\label{Sect3}
\setcounter{equation}{0}

In this section we formulate the iteratively regularized Gauss--Newton method in the framework of Banach spaces
to produce a stable approximate solution of (\ref{1.1}) from an available noisy data $y^\d$ satisfying
(\ref{1.3}). In order to capture the features of solutions, we take a proper, lower semi-continuous, $p$-convex function
$\Theta: \X\to (-\infty, \infty]$ with $p\ge 2$; we assume that $\Theta$ satisfies (\ref{pconv})
and $D(F)\cap D(\p \Theta) \ne \emptyset$. We will work under the following conditions on the nonlinear operator $F$.

\begin{Assumption} \label{A1}

\begin{enumerate}
\item[]
\begin{enumerate}

\item[(a)] $D(F)$ is a closed convex set in $\X$ and the equation (\ref{1.1}) has a solution $x^\dag\in D(F)\cap D(\p \Theta)$;

\item[(b)] There is $\rho>0$ such that for each $x\in B_\rho(x^\dag)\cap D(F)$ there is a bounded linear operator
$F'(x): \X\to \Y$ such that
$$
\lim_{t\searrow 0} \frac{F(x+t(z-x))-F(x)}{t} =F'(x) (z-x), \qquad \forall z \in B_\rho(x^\dag) \cap D(F),
$$
where $B_\rho(x^\dag):=\{x\in \X: \|x-x^\dag\|< \rho\}$;

\item[(c)] The operator $T:=F'(x^\dag)$ is properly scaled so that $\|T\|\le \a_0^{1/p}/\gamma$;

\item[(d)] There exist two constants $K_0$ and $K_1$ such that
$$
\|[F'(z)-F'(x)] w\|\le K_0\|z-x\| \|F'(x) w\| +K_1\|F'(x)(z-x)\| \|w\|
$$
for all $w\in \X$ and $x, z\in B_\rho(x^\dag)\cap D(F)$.

\end{enumerate}
\end{enumerate}
\end{Assumption}

It is easy to see that condition (b) in Assumption \ref{A1} implies, for any $x, z\in B_\rho(x^\dag)\cap D(F)$, that
the function  $t\in (0,1) \to  F(x+t(z-x))\in \Y$ is differentiable and
$$
\frac{d}{d t} F(x+t(z-x)) =F'(x+t(z-x)) (z-x).
$$
The condition (d) was first formulated in \cite{JT09}.  In section 6 we will present several examples from the parameter
identification in partial differential equations to indicate that this condition indeed can be verified for a wide range
of applications. As direct consequences of (b) and (d), we have for $x, z\in B_\rho(x^\dag)\cap D(F)$ that
$$
\|F(z)-F(x)-F'(x) (z-x)\| \le \frac{1}{2} (K_0+K_1) \|z-x\| \|F'(x) (z-x)\|
$$
and
$$
\|F(z)-F(x)-F'(z) (z-x)\| \le \frac{3}{2} (K_0+K_1) \|z-x\| \|F'(x) (z-x)\|.
$$

In order to formulate the method, let
$$
\chi_{D(F)}(x)=\left\{\begin{array}{lll}
0, & x\in D(F),\\
+\infty, \quad & x\not\in D(F)
\end{array}\right.
$$
be the characteristic function of $D(F)$ and define
\begin{equation}\label{theta_F}
\Theta_F(x):=\Theta(x) +\chi_{D(F)}(x).
\end{equation}
Since $D(F)$ is closed and convex, $\chi_{D(F)}$ is a proper, lower semi-continuous, convex function on $\X$.
Consequently, $\Theta_F$ is a proper, lower semi-continuous, $p$-convex function on $\X$ satisfying
\begin{equation}\label{9.16.1}
\|z-x\|\le \gamma \left[D_\xi \Theta_F (z,x)\right]^{\frac{1}{p}}, \quad \forall z\in \X, x\in D(\p \Theta_F) \mbox{ and } \xi\in \p \Theta_F(x).
\end{equation}
We pick $\xi_0\in \X^*$ and define $x_0:=\nabla \Theta_F^*(\xi_0)$, where
$\Theta_F^*$ denotes the Fenchel conjugate of $\Theta_F$ and is known to be Fr\'{e}chet differentiable with
gradient $\nabla \Theta_F^*: \X^*\to \X$.  We have $x_0\in D(\p\Theta_F):=D(F)\cap D(\p\Theta)$ and $\xi_0\in \p \Theta_F(x_0)$.
Consequently
$$
x_0=\arg\min_{x\in \X} \left\{\Theta_F(x)-\l \xi_0, x\r\right\} =\arg \min_{x\in D(F)} \left\{\Theta(x)-\l \xi_0, x\r\right\}.
$$
We use $\xi_0$ and $x_0$ as initial data. We then pick a sequence of positive numbers $\{\a_n\}$ satisfying (\ref{1.5})
and define $\{x_n^\d\}$ successively by setting $x_0^\d:=x_0$ and letting $x_{n+1}^\d$ be the unique minimizer of the
convex minimization problem
\begin{equation}\label{IRGN}
\min_{x\in \X} \left\{\|y^\d-F(x_n^\d)-F'(x_n^\d) (x-x_n^\d)\|^p +\a_n D_{\xi_0} \Theta_F(x, x_0)\right\}.
\end{equation}
By the properties of $\Theta_F$, $x_{n+1}^\d$ is uniquely defined and $x_{n+1}^\d\in D(F)$.

Considering the practical applications, the iteration must be terminated by some a posteriori stopping rule
to output an integer $n_\d$ and hence $x_{n_\d}^\d$ which is used as an approximate solution of (\ref{1.1}).
In this paper we will consider the following three stopping rules.

\begin{Rule}\label{Rule1}
Let $\tau>1$ be a given number. We define $n_\d$ to be the integer such that
$$
\|F(x_{n_\d}^\d)-y^\d\| \le \tau \d <\|F(x_n^\d)-y^\d\|, \quad 0\le n<n_\d.
$$
\end{Rule}

\begin{Rule}\label{Rule2}
Let $\tau>1$ be a given number. If $\|F(x_0)-y^\d\|\le \tau \d$ we define $n_\d=0$;
otherwise we define $n_\d \ge 1$ to be the first integer such that
$$
\frac{1}{2} \left(\|F(x_{n_\d}^\d)-y^\d\| +\|F(x_{n_\d-1}^\d)-y^\d\|\right) \le \tau \d.
$$
\end{Rule}

\begin{Rule}\label{Rule3}
Let $\tau>1$ be a given number. If $\|F(x_0)-y^\d\|\le \tau \d$ we define $n_\d=0$;
otherwise we define $n_\d \ge 2$ to be the first integer such that
\begin{align}\label{MDP}
\max  \left\{\|F(x_{n_\d}^\d)-y^\d\|, \|F(x_{n_\d-1}^\d)-y^\d\| \right\} \le \tau \d.
\end{align}
\end{Rule}

Rule \ref{Rule1} is known as the discrepancy principle and is widely used to terminate
regularization methods. Rule \ref{Rule3} appeared first in \cite{K99} to deal with
some Newton-type regularization methods in Hilbert spaces. It is easy to see that
Rule \ref{Rule1} terminates the iteration no later than Rule \ref{Rule2}, and Rule \ref{Rule2}
terminates the iteration no later than Rule \ref{Rule3}. Most of the results in this paper are
true for Rule \ref{Rule1} except the ones in Section \ref{Sect4} concerning the rates of convergence
under certain source conditions formulated as variational inequalities; the convergence rates,
however, can be derived when the iteration is terminated by either Rule \ref{Rule2} or Rule \ref{Rule3}.

In this section we show that the method together with any one of the above three stopping rules with
$\tau>1$ is well-defined. To this end, we introduce the integer $\hat{n}_\d$ defined by
\begin{align}\label{nhat}
\a_{\hat{n}_\d}\le \frac{\mu^{-p} \d^p}{ \|\xi_0-\xi^\dag\|^{p^*}} <\a_n, \quad 0\le n<\hat{n}_\d,
\end{align}
where $p^*$ is the number conjugate to $p$, i.e. $1/p+1/p^*=1$, the number $\mu>0$ is chosen to satisfy
\begin{equation}\label{gamma}
\gamma^{\frac{1}{p-1}} \theta^{\frac{2}{p}} \mu^{-1} < \frac{\tau-1}{2}.
\end{equation}
and $\xi^\dag\in \p \Theta_F(x^\dag)$ is the unique element that realizes the distance $d(\xi_0, \p\Theta_F(x^\dag))$
from $\xi_0$ to the closed convex set $\p\Theta_F(x^\dag)$ in $\X^*$, i.e.
$$
d(\xi_0, \p\Theta_F(x^\dag)) =\|\xi_0-\xi^\dag\|.
$$
Because the sequence $\{\a_n\}$ satisfies (\ref{1.5}), the integer $\hat{n}_\d$ exists and is finite.
We will show that $x_n^\d \in B_\rho(x^\dag)\cap D(F)$ for all $0\le n\le \hat{n}_\d$ and $n_\d\le \hat{n}_\d$
for the integer $n_\d$ defined by any one of the above three stopping rules. For simplicity of presentation,
we use the notation $e_n^\d:=x_n^\d-x^\dag$. We also use $C$ to denote a universal constant that is
independent of $n$ and $\d$ when its explicit formula is not important.

\begin{lemma}\label{L3.1.1}
Let $\X$ and $\Y$ be Banach spaces, let $\Theta:\X\to (-\infty, \infty]$ be a proper,
lower semi-continuous, $p$-convex function with $p\ge 2$, let $\{\a_n\}$ be a
sequence satisfying (\ref{1.5}), and let $F$ satisfy Assumption \ref{A1}.
If $(p^* \gamma\mu + 2 \gamma^{\frac{p}{p-1}})  \|\xi_0-\xi^\dag\|^{\frac{1}{p-1}} <\rho$ and
$\vep:=(K_0+K_1) \|\xi_0-\xi^\dag\|^{\frac{1}{p-1}}$ is sufficiently small, then $x_n^\d\in
B_\rho(x^\dag)\cap D(F)$ and
\begin{align}
\|x_n^\d- x^\dag\| &\le \left(p^* \gamma\mu + 2 \gamma^{\frac{p}{p-1}}\right)  \|\xi_0-\xi^\dag\|^{\frac{1}{p-1}}, \label{est1}\\
\|T (x_n^\d-x^\dag)\|  & \le  \left(3 \mu +\gamma^{\frac{1}{p-1}} \right) \theta^{\frac{1}{p}}
\|\xi_0-\xi^\dag\|^{\frac{1}{p-1}} \a_n^{\frac{1}{p}}  \label{est2}
\end{align}
for all $0\le n\le \hat{n}_\d$. Moreover, $n_\d\le \hat{n}_\d$ for the integer $n_\d$ defined by either Rule \ref{Rule1},
\ref{Rule2}, or \ref{Rule3} with $\tau>1$.
\end{lemma}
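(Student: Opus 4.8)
The key quantity to track is the Bregman distance $D_{\xi_n^\d}\Theta_F(x^\dag, x_n^\d)$ together with the dual variable $\xi_n^\d$ arising from the optimality condition for the minimization problem (\ref{IRGN}). The induction hypothesis will assert that $x_n^\d \in B_\rho(x^\dag)\cap D(F)$ and that bounds (\ref{est1}), (\ref{est2}) hold up to step $n$; one also needs an auxiliary bound on $\|\xi_n^\d - \xi_0\|$ or equivalently on $D_{\xi_0}\Theta_F(x^\dag, \cdot)$ along the iteration. The base case $n=0$ is immediate: $x_0^\d = x_0 \in D(F)\cap D(\p\Theta)$ by construction, and (\ref{9.16.1}) with $\xi = \xi_0$, $z = x^\dag$ gives $\|x_0 - x^\dag\| \le \gamma [D_{\xi_0}\Theta_F(x^\dag, x_0)]^{1/p} = \gamma[\Theta_F(x^\dag) - \Theta_F(x_0) - \l\xi_0, x^\dag - x_0\r]^{1/p}$, which by the definition of $\xi^\dag \in \p\Theta_F(x^\dag)$ and a standard convexity estimate is controlled by $\gamma^{p/(p-1)}\|\xi_0 - \xi^\dag\|^{1/(p-1)}$ up to constants; the hypothesis on $\rho$ then places $x_0$ in the ball.

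For the inductive step, I would first write down the Euler--Lagrange equation for (\ref{IRGN}): there exist $\xi_{n+1}^\d \in \p\Theta_F(x_{n+1}^\d)$ and an element of the subdifferential of the norm term such that
$$
\xi_{n+1}^\d = \xi_0 - \frac{p}{\a_n} F'(x_n^\d)^* j_{n+1}^\d,
$$
where $j_{n+1}^\d$ is a duality-type functional associated with the residual $y^\d - F(x_n^\d) - F'(x_n^\d)(x_{n+1}^\d - x_n^\d)$. Using the identity (\ref{4.3.1}) for Bregman distances, I would derive a recursion of the form
$$
D_{\xi_{n+1}^\d}\Theta_F(x^\dag, x_{n+1}^\d) \le D_{\xi_n^\d}\Theta_F(x^\dag, x_n^\d) + (\text{residual-type term}) - (\text{positive term}),
$$
then estimate the residual term by inserting and removing $F(x_n^\d) + F'(x_n^\d)(x^\dag - x_n^\d)$ and invoking Assumption \ref{A1}(d) (and its consequences stated after the assumption) together with (\ref{1.3}). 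The role of condition (c), $\|T\| \le \a_0^{1/p}/\gamma$, and the smallness of $\vep$ is to absorb the cross terms and close the monotonicity-type inequality; the condition on $\mu$ in (\ref{gamma}) is precisely what makes the discrepancy-type quantities comparable to $\a_n^{1/p}$. From the resulting bound on $D_{\xi_0}\Theta_F(x^\dag, x_{n+1}^\d)$ I recover (\ref{est1}) via (\ref{9.16.1}), confirming $x_{n+1}^\d \in B_\rho(x^\dag)$, and then (\ref{est2}) follows from the estimate on $\|T e_{n+1}^\d\|$ built along the way, with the factor $\theta^{1/p}$ coming from the ratio condition $\a_n/\a_{n+1} \le \theta$ in (\ref{1.5}).

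For the final assertion $n_\d \le \hat n_\d$, I would argue by contradiction: if the iteration has not stopped by step $\hat n_\d$ under Rule \ref{Rule1} (the argument for Rules \ref{Rule2}, \ref{Rule3} is similar since those stop no earlier), then $\|F(x_n^\d) - y^\d\| > \tau\d$ for all $n < \hat n_\d$, yet combining (\ref{est2}) at $n = \hat n_\d$, the consequences of Assumption \ref{A1}(d) relating $\|F(x^\dag) - F(x_n^\d) - F'(x_n^\d)(x^\dag - x_n^\d)\|$ to $\|T e_n^\d\|$-type quantities, (\ref{1.3}), and the defining inequality $\a_{\hat n_\d} \le \mu^{-p}\d^p/\|\xi_0 - \xi^\dag\|^{p^*}$ of (\ref{nhat}) yields $\|F(x_{\hat n_\d}^\d) - y^\d\| \le \tau\d$ (using (\ref{gamma}) to beat the constant $\tau$), a contradiction.

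\textbf{The main obstacle} will be controlling the residual term in the Bregman recursion: the term $\|y^\d - F(x_n^\d) - F'(x_n^\d)(x_{n+1}^\d - x_n^\d)\|$ must be compared both with $\d$ and with $\|T e_n^\d\|$, and this requires carefully exploiting the minimizing property of $x_{n+1}^\d$ (comparing its objective value against the competitor $x = x^\dag$) in tandem with the nonlinearity estimate, while keeping every constant explicit enough that the smallness conditions on $\vep$ and the inequality (\ref{gamma}) genuinely close the induction. Getting the bookkeeping of the constants $K_0, K_1, \gamma, \mu, \theta$ right — so that they propagate uniformly in $n$ rather than degrading geometrically — is the delicate heart of the argument.
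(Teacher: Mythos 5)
Your base case and your treatment of the stopping index are broadly in line with the paper, but the core of your inductive step rests on a mechanism that does not fit this method and, as far as I can see, cannot be made to close. You propose to extract from the Euler--Lagrange condition of (\ref{IRGN}) a recursion of the form $D_{\xi_{n+1}^\d}\Theta_F(x^\dag,x_{n+1}^\d)\le D_{\xi_n^\d}\Theta_F(x^\dag,x_n^\d)+(\mbox{residual})-(\mbox{positive term})$. Such telescoping/quasi-monotonicity arguments are natural for iterations whose penalty is anchored at the previous iterate (Landweber or proximal-type schemes), but in (\ref{IRGN}) the Bregman penalty is anchored at $x_0$ at every step, so there is no proximal structure and no reason for the Bregman distances along the iteration to be (quasi-)monotone. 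Even granting the recursion formally, for $n\le\hat{n}_\d$ the per-step residual term would be of size $\a_n^{-1}$ times the $p$-th power of the data-plus-Taylor error, which by (\ref{nhat}) is only bounded by a constant multiple of $\|\xi_0-\xi^\dag\|^{p^*}$ per step and hence not summable: accumulating it over the $\hat{n}_\d$ steps (with $\hat{n}_\d\to\infty$ as $\d\to 0$) destroys the uniform bound (\ref{est1}) and, a fortiori, the decay $\|T e_n^\d\|\lesssim \a_n^{1/p}$ of (\ref{est2}).

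The paper's proof uses no optimality condition at all. At each step it compares the objective value of (\ref{IRGN}) at $x_{n+1}^\d$ with its value at the competitor $x^\dag$, and applies the identity (\ref{4.3.1}) not between consecutive iterates but to shift the anchor from $(x_0,\xi_0)$ to $(x^\dag,\xi^\dag)$, giving (\ref{2.20.1}); Young's inequality and the $p$-convexity of $\Theta_F$ then produce, at that single step, the bounds on $\|e_{n+1}^\d\|$ and on the new linearized residual. The induction hypothesis enters only through the Taylor-remainder estimate $\|y^\d-F(x_n^\d)-F'(x_n^\d)(x^\dag-x_n^\d)\|\le \d+\frac{3}{2}(K_0+K_1)\|e_n^\d\|\,\|T e_n^\d\|$ combined with $\d\le\mu\|\xi_0-\xi^\dag\|^{1/(p-1)}\a_n^{1/p}$ from (\ref{nhat}), so nothing accumulates in $n$. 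This competitor comparison, which your final paragraph mentions only as an auxiliary device for the "residual term", must in fact be the backbone of the induction. A smaller point: for $n_\d\le\hat{n}_\d$ you argue via Rule \ref{Rule1} and call Rules \ref{Rule2}, \ref{Rule3} "similar since those stop no earlier"; since those rules stop later, the Rule \ref{Rule1} conclusion does not transfer, and one actually needs $\|F(x_n^\d)-y^\d\|\le\tau\d$ at both indices $n=\hat{n}_\d$ and $n=\hat{n}_\d-1$, which is precisely why the factor $\theta^{2/p}$ appears in the condition (\ref{gamma}) on $\mu$.
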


\begin{proof}
Since $\xi^\dag\in \p \Theta_F(x^\dag)$ implies $x^\dag=\nabla \Theta_F^*(\xi^\dag)$, from the definition of
$x_0$ and (\ref{3.29.1}) it follows that
\begin{equation*}
\|x_0-x^\dag\| \le \gamma^{\frac{p}{p-1}} \|\xi_0-\xi^\dag\|^{\frac{1}{p-1}} < \rho.
\end{equation*}
Thus $x_0\in B_\rho(x^\dag)\cap D(F)$ and (\ref{est1}) holds. In view of the scaling condition
$\|T\|\le \a_0^{\frac{1}{p}}/\gamma$ we can obtain
\begin{equation}\label{9.15.2}
\|T e_0\| \le \gamma^{\frac{1}{p-1}}\a_0^{\frac{1}{p}}\|\xi_0-\xi^\dag\|^{\frac{1}{p-1}}.
\end{equation}
Therefore the result holds for $n=0$.
Now  we assume that the estimates for $x_n^\d$ have been proved for some $n<\hat{n}_\d$ and show that the estimates
for $x_{n+1}^\d$ are also true. By the minimizing property of $x_{n+1}^\d$ we have
\begin{align*}
\|y^\d-F&(x_n^\d)-F'(x_n^\d) (x_{n+1}^\d-x_n^\d)\|^p +\a_n D_{\xi_0} \Theta_F(x_{n+1}^\d, x_0)\\
& \le  \|y^\d-F(x_n^\d)-F'(x_n^\d) (x^\dag-x_n^\d)\|^p +\a_n D_{\xi_0} \Theta_F(x^\dag, x_0).
\end{align*}
By using the identity (\ref{4.3.1}) we have
$$
D_{\xi_0} \Theta_F(x_{n+1}^\d, x_0)-D_{\xi_0} \Theta_F(x^\dag, x_0) =D_{\xi^\dag} \Theta_F(x_{n+1}^\d, x^\dag)
-\l\xi_0-\xi^\dag, x_{n+1}^\d-x^\dag\r.
$$
Therefore, it follows from the above inequality that
\begin{align}\label{2.20.1}
 \|y^\d & - F(x_n^\d)-F'(x_n^\d) (x_{n+1}^\d-x_n^\d)\|^p +\a_n D_{\xi^\dag} \Theta_F(x_{n+1}^\d, x^\dag) \nonumber\\
& \le \|y^\d-F(x_n^\d)-F'(x_n^\d) (x^\dag-x_n^\d)\|^p +\a_n \l \xi_0-\xi^\dag, x_{n+1}^\d-x^\dag\r.
\end{align}
In view of the Young's inequality $ab\le \frac{1}{s} a^s +\frac{1}{t} b^t$ for $a, b\ge 0$, $s>1$ and
$\frac{1}{s}+\frac{1}{t}=1$, we have
\begin{align*}
\l \xi_0-\xi^\dag, x_{n+1}^\d-x^\dag\r & \le \frac{1}{p} \left(\gamma^{-1} \|e_{n+1}^\d\|\right)^p
+\frac{1}{p^*} \left(\gamma \|\xi_0-\xi^\dag\|\right)^{p^*}.
\end{align*}
Combining this with (\ref{2.20.1}) and using the $p$-convexity of $\Theta_F$, we can obtain
\begin{align*}
\|y^\d& - F(x_n^\d)-F'(x_n^\d) (x_{n+1}^\d-x_n^\d)\|^p
 + \frac{1}{p^*} \a_n \left(\gamma^{-1} \|e_{n+1}^\d\|\right)^p  \nonumber\\
& \le \|y^\d-F(x_n^\d)-F'(x_n^\d) (x^\dag-x_n^\d)\|^p
+ \frac{1}{p^*} \a_n \left(\gamma \| \xi_0-\xi^\dag\|\right)^{p^*}.
\end{align*}
By using the fact that $(a+b)^t\le a^t +b^t$ for $a, b\ge 0$ and $0\le t\le 1$, we have from
the above inequality that
\begin{align}\label{2.20.2}
\|e_{n+1}^\d\| \le \gamma \left(\frac{p^*}{ \a_n}\right)^{\frac{1}{p}}  \|y^\d-F(x_n^\d)-F'(x_n^\d) (x^\dag-x_n^\d)\|
 +\left(\gamma^p \|\xi_0-\xi^\dag\|\right)^{\frac{1}{p-1}}
\end{align}
and
\begin{align}\label{2.20.3}
\|  y^\d -F(x_n^\d)-F'(x_n^\d) (x_{n+1}^\d-x_n^\d)\|
&\le \|y^\d-F(x_n^\d)-F'(x_n^\d) (x^\dag-x_n^\d)\| \nonumber\\
&\quad \, +\left(\frac{1}{p^*} \gamma^{p^*} \a_n \|\xi_0-\xi^\dag\|^{p^*} \right)^{\frac{1}{p}}.
\end{align}
By using $\|y^\d-y\|\le \d$ and Assumption \ref{A1} we have
\begin{equation}\label{2.20.4}
\|y^\d-F(x_n^\d)-F'(x_n^\d) (x^\dag-x_n^\d)\|\le \d+ \frac{3}{2}(K_0+K_1) \|e_n^\d\| \|T e_n^\d\|.
\end{equation}
Since $n<\hat{n}_\d$, it follows from (\ref{nhat}) that
\begin{equation}\label{2.21.1}
\d\le \mu \|\xi_0-\xi^\dag\|^{\frac{1}{p-1}} \a_n^{\frac{1}{p}}.
\end{equation}
In view of the induction hypotheses we thus have
\begin{align*}
\|& y^\d-F(x_n^\d)-F'(x_n^\d) (x^\dag-x_n^\d)\|
\le \left(\mu + C \vep \right) \|\xi_0-\xi^\dag\|^{\frac{1}{p-1}} \a_n^{\frac{1}{p}}.
\end{align*}
Combining this with (\ref{2.20.2}) gives
\begin{align*}
\|e_{n+1}^\d\| & \le \left(\left(p^*\right)^{\frac{1}{p}} \gamma \mu + \gamma^{\frac{p}{p-1}} +C \vep \right)
\|\xi_0-\xi^\dag\|^{\frac{1}{p-1}}.
\end{align*}
Therefore, if $\vep$ is sufficiently small, then
$$
\|e_{n+1}^\d\| \le \left(p^* \gamma\mu + 2 \gamma^{\frac{p}{p-1}} \right)  \|\xi_0-\xi^\dag\|^{\frac{1}{p-1}} < \rho.
$$
Next we estimate $\|T e_{n+1}^\d\|$. From (\ref{2.20.3}) and (\ref{2.20.4}) it follows that
\begin{align}\label{2.19.2}
\| y^\d -  F(x_n^\d)-F'(x_n^\d) (x_{n+1}^\d-x_n^\d)\|
&  \le \d+ \frac{3}{2} (K_0+K_1) \|e_n^\d\| \|T e_n^\d\| \nonumber\\
& \quad \, +\left(\frac{1}{p^*} \gamma^{p^*} \a_n \|\xi_0-\xi^\dag\|^{p^*} \right)^{\frac{1}{p}}.
\end{align}
Observing that
\begin{align}\label{3.8.20}
\|y^\d-y -T e_{n+1}^\d \| & \le \|y^\d-F(x_n^\d)- F'(x_n^\d) (x_{n+1}^\d-x_n^\d)\| \nonumber \\
& \quad \, + \|y-F(x_n^\d)- F'(x_n^\d) (x^\dag-x_n^\d)\| \nonumber \\
& \quad \, + \|(T-F'(x_n^\d) ) e_{n+1}^\d\|.
\end{align}
Thus, we may use Assumption \ref{A1}, (\ref{2.19.2}), and the estimates on $\|e_n^\d\|$ and $\|e_{n+1}^\d\|$
to derive that
\begin{align}\label{2.20.5}
\|y^\d-y-T e_{n+1}^\d\| & \le \d+ C \vep \|T e_n^\d\| + C \vep \|T e_{n+1}^\d\|
 +\left(\frac{1}{p^*} \gamma^{p^*} \a_n \|\xi_0-\xi^\dag\|^{p^*} \right)^{\frac{1}{p}}.
\end{align}
Therefore, by using the induction hypothesis on $\|T e_n^\d\|$, the fact $\a_n\le \theta\a_{n+1}$, and (\ref{2.21.1}),
we can obtain for sufficiently small $\vep$ that
\begin{align*}
\|T e_{n+1}^\d\| &\le \left(3 \mu +\gamma^{\frac{1}{p-1}}\right)
\left(\theta \a_{n+1} \|\xi_0-\xi^\dag\|^{p^*} \right)^{\frac{1}{p}}.
\end{align*}
We therefore obtain the desired estimates (\ref{est1}) and (\ref{est2}).

Finally we show that $n_\d\le \hat{n}_\d$. We first claim that for $0\le n\le \hat{n}_\d$ there holds
\begin{align*}
\| y^\d-y-T e_n^\d\| & \le \d + \left(\gamma^{\frac{1}{p-1}} \theta^{\frac{1}{p}}+ C \vep \right)
\|\xi_0-\xi^\dag\|^{\frac{1}{p-1}} \a_n^{\frac{1}{p}}.
\end{align*}
In fact, for $n=0$ this inequality follows from (\ref{1.3}) and (\ref{9.15.2}), and for $1\le n\le \hat{n}_\d$
it follows from (\ref{2.20.5}), (\ref{est2})  and (\ref{1.5}).
Therefore, by using Assumption \ref{A1} and the estimates (\ref{est1}) and (\ref{est2}), we can obtain
\begin{align}\label{9.15.3}
\|y^\d-F(x_n^\d)\| & \le \|y^\d-y -T e_n^\d\| +\|y-F(x_n^\d)+ T e_n^\d\| \nonumber\\
& \le \d + \left(\gamma^{\frac{1}{p-1}} \theta^{\frac{1}{p}} + C \vep \right)
\|\xi_0-\xi^\dag\|^{\frac{1}{p-1}} \a_n^{\frac{1}{p}}.
\end{align}
If $\hat{n}_\d=0$, then $\a_0\le \mu^{-p} \d^p \|\xi_0-\xi^\dag\|^{-p^*}$. Therefore
\begin{align*}
\|F(x_0)-y^\d\| \le \d +\left(\gamma^{\frac{1}{p-1}} \theta^{\frac{1}{p}} \mu^{-1} +C\vep\right) \d.
\end{align*}
In view of (\ref{gamma}) we have for sufficiently small $\vep$ that $\|F(x_0)-y^\d\|\le \tau \d$.
Consequently $n_\d=0$.

In the following we assume that $\hat{n}_\d\ge 1$. Observing from (\ref{1.5}) and (\ref{nhat}) that
for $n=\hat{n}_\d$ and $\hat{n}_\d-1$ there holds
$$
\a_n^{\frac{1}{p}} \le \left(\theta \a_{\hat{n}_\d}\right)^{\frac{1}{p}}
\le \mu^{-1}\theta^{\frac{1}{p}} \|\xi_0-\xi^\dag\|^{-\frac{1}{p-1}} \d.
$$
Thus, from (\ref{9.15.3}) we have for $n=\hat{n}_\d$ and $\hat{n}_\d-1$ that
\begin{align*}
\|y^\d-F(x_n^\d)\| & \le (1+C\vep) \d + \gamma^{\frac{1}{p-1}} \theta^{\frac{2}{p}} \mu^{-1} \d.
\end{align*}
Since $\mu$ is chosen to satisfy (\ref{gamma}), we have for sufficiently small $\vep$ that
$$
\|y^\d-F(x_n^\d)\|\le \tau \d \quad \mbox{for } n=\hat{n}_\d \mbox{ and } \hat{n}_\d-1.
$$
Therefore, by the definition of $n_\d$ we have $n_\d\le \hat{n}_\d$. \hfill $\Box$
\end{proof}

\begin{remark}\label{remark3.1}
We use $\Theta_F$ in (\ref{IRGN}) to guarantee that $\{x_n^\d\}\subset D(F)$ without assuming
$x^\dag$ is an interior point of $D(F)$. If $x^\dag$ is an interior point of $D(F)$ so that
$B_\rho(x^\dag)\subset D(F)$ for a ball $B_\rho(x^\dag)$ of radius $\rho>0$, we can replace $\Theta_F$ in (\ref{IRGN})
by $\Theta$ and define $x_{n+1}^\d$ to be the unique minimizer of the convex minimization problem
\begin{equation*}
\min_{x\in \X} \left\{\|y^\d-F(x_n^\d)-F'(x_n^\d) (x-x_n^\d)\|^p +\a_n D_{\xi_0} \Theta(x, x_0)\right\}.
\end{equation*}
The same argument in the proof of Lemma \ref{L3.1.1} can be used to show that for sufficiently small $\|\xi_0-\xi^\dag\|$ there
holds $x_n^\d\in B_\rho(x^\dag)\subset D(F)$ for all $0\le n\le \hat{n}_\d$. Therefore, the modified method
is well-defined and all the results in this paper still hold.

\end{remark}

As a byproduct of the estimates in Lemma \ref{L3.1.1}, we can prove a weak convergence result of our method.

\begin{theorem}\label{T4.5.1}
Assume that  the conditions in Lemma \ref{L3.1.1} hold. Assume also that $\X$ is reflexive and $F$ is weakly closed.
If the method (\ref{IRGN}) is terminated by either Rule \ref{Rule1}, \ref{Rule2}, or \ref{Rule3} with $\tau>1$,
then for any sequence $\{y^{\d_k}\}$ satisfying $\|y^{\d_k}-y\|\le \d_k$ with $\d_k\rightarrow 0$ as $k\rightarrow \infty$,
$\{x_{n_{\d_k}}^{\d_k}\}$ has a subsequence that converges weakly in $\X$ to a solution of (\ref{1.1}) in
$\overline{B_\rho(x^\dag)}\cap D(F)$. If $x^\dag$ is the unique solution of (\ref{1.1}) in
$\overline{B_\rho(x^\dag)}\cap D(F)$, then $x_{n_\d}^\d$ converges weakly in $\X$ to $x^\dag$ as $\d\rightarrow 0$.
\end{theorem}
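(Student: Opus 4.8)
The plan is to reduce the statement to the uniform estimates already established in Lemma \ref{L3.1.1}, together with reflexivity of $\X$ and weak closedness of $F$, following the usual pattern of convergence proofs for regularization methods under a posteriori stopping rules.

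First I would fix a sequence $\{y^{\d_k}\}$ with $\|y^{\d_k}-y\|\le \d_k$ and $\d_k\to 0$. By Lemma \ref{L3.1.1}, the stopping index $n_{\d_k}$ produced by Rule \ref{Rule1}, \ref{Rule2}, or \ref{Rule3} satisfies $n_{\d_k}\le \hat n_{\d_k}<\infty$, so $x_{n_{\d_k}}^{\d_k}$ is well-defined; moreover, the estimate (\ref{est1}) shows that $x_{n_{\d_k}}^{\d_k}$ lies in a fixed ball $B_r(x^\dag)$ with $r:=(p^*\gamma\mu+2\gamma^{p/(p-1)})\|\xi_0-\xi^\dag\|^{1/(p-1)}<\rho$, and $x_{n_{\d_k}}^{\d_k}\in D(F)$. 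Hence $\{x_{n_{\d_k}}^{\d_k}\}$ is bounded in the reflexive space $\X$ and, after passing to a subsequence (not relabeled), it converges weakly to some $x^*\in\X$. Since $\overline{B_\rho(x^\dag)}\cap D(F)$ is closed and convex, hence weakly closed, we get $x^*\in \overline{B_\rho(x^\dag)}\cap D(F)$.

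Next I would show $x^*$ solves (\ref{1.1}). The key observation is that each of the three rules forces a uniform residual bound $\|F(x_{n_{\d_k}}^{\d_k})-y^{\d_k}\|\le c\tau\d_k$ for a fixed constant $c$ (with $c=1$ for Rules \ref{Rule1} and \ref{Rule3}, and $c=2$ for Rule \ref{Rule2}, since the two summands there are nonnegative), and this holds whether or not $n_{\d_k}$ stays bounded, so the cases "$n_{\d_k}$ bounded" and "$n_{\d_k}\to\infty$" need no separate treatment. Combining with (\ref{1.3}) gives $\|F(x_{n_{\d_k}}^{\d_k})-y\|\le(c\tau+1)\d_k\to 0$, i.e.\ $F(x_{n_{\d_k}}^{\d_k})\to y$ strongly in $\Y$. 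Since $x_{n_{\d_k}}^{\d_k}\rightharpoonup x^*$ and $F$ is weakly closed, it follows that $x^*\in D(F)$ and $F(x^*)=y$, which proves the first assertion. For the second assertion I would use the standard subsequence argument: if $x_{n_\d}^\d$ did not converge weakly to $x^\dag$, there would be a weakly open neighbourhood $U$ of $x^\dag$ and a sequence $\d_k\to 0$ with $x_{n_{\d_k}}^{\d_k}\notin U$; applying the first part to $\{\d_k\}$ yields a further subsequence converging weakly to a solution in $\overline{B_\rho(x^\dag)}\cap D(F)$, which by uniqueness must be $x^\dag$, so that subsequence eventually lies in $U$, a contradiction.

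The heavy lifting is already contained in Lemma \ref{L3.1.1} (the uniform boundedness of the iterates and the bound $n_\d\le\hat n_\d$), so there is no serious analytic obstacle here; the only point requiring care is to record that all three stopping rules deliver the residual bound $\|F(x_{n_\d}^\d)-y^\d\|=O(\d)$ uniformly, so the argument goes through with a single weak-compactness extraction, and then to invoke the weak closedness of $F$ correctly in combination with the strong convergence $F(x_{n_{\d_k}}^{\d_k})\to y$.
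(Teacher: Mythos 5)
Your argument is correct and coincides with the paper's own proof: Lemma \ref{L3.1.1} places $x_{n_{\d_k}}^{\d_k}$ in $B_\rho(x^\dag)\cap D(F)$, reflexivity yields a weakly convergent subsequence, closed convexity of $\overline{B_\rho(x^\dag)}\cap D(F)$ keeps the weak limit there, the stopping rules give $\|F(x_{n_{\d_k}}^{\d_k})-y^{\d_k}\|\le 2\tau\d_k$ so that $F(x_{n_{\d_k}}^{\d_k})\to y$, and weak closedness of $F$ identifies the limit as a solution, with the uniqueness case handled by the standard subsequence argument. Your bookkeeping of the residual constants for the three rules is a slightly finer observation than the paper's uniform bound $2\tau\d_k$, but the proof is essentially identical.
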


\begin{proof}
It follows from Lemma \ref{L3.1.1} that $\{x_{n_{\d_k}}^{\d_k}\}\subset B_\rho(x^\dag)\cap D(F)$.
Since $\X$ is reflexive, $\{x_{n_{\d_k}}^{\d_k}\}$ has a
subsequence that converges weakly in $\X$ to some $\bar{x}\in \X$. By using the weak lower semi-continuity of
norms in Banach spaces and the convexity and closedness of $D(F)$, we have $\bar{x} \in \overline{B_\rho(x^\dag)}\cap D(F)$.
Moreover, since $\|F(x_{n_{\d_k}}^{\d_k})-y^{\d_k}\|\le 2\tau \d_k$, we have $\|F(x_{n_{\d_k}}^{\d_k})-y\|\rightarrow 0$
as $k\rightarrow \infty$. By the weakly closedness of $F$ we have $F(\bar{x})=y$, i.e. $\bar{x}$ is a solution of
(\ref{1.1}) in $\overline{B_\rho(x^\dag)}\cap D(F)$. \hfill $\Box$
\end{proof}

\begin{remark}
In Theorem \ref{T4.5.1} we only obtain the weak convergence. The proof of strong convergence remains open in general.
However, in section \ref{Sect5} we will prove a strong convergence result when $\Y$ is a Hilbert space and $\Theta$ is a
$2$-convex function. Moreover, in some situations we are interested in the strong convergence in a Banach space ${\mathcal Z}$ in
which $\X$ can be compactly embedded, the weak convergence in $\X$ is already enough for the purpose.
\end{remark}

\section{\bf Rates of convergence}\label{Sect4}
\setcounter{equation}{0}

In this section we will derive rates of convergence for $x_{n_\d}^\d$ to $x^\dag$ under certain source
conditions. In Hilbert space setting, the usual source conditions are
\begin{equation}\label{s1}
x_0-x^\dag =(T^*T)^{\frac{\nu}{2}} \omega
\end{equation}
for some $0<\nu\le 1$ and $\omega\in \X$. By the interpolation inequality it is easy to see that
(\ref{s1}) implies
\begin{equation}\label{s2}
\l x_0-x^\dag, x-x^\dag\r \le \|\omega\| \|x-x^\dag\|^{1-\nu} \|T(x-x^\dag)\|^\nu, \quad \forall x\in \X.
\end{equation}
In Banach space setting, the formulation (\ref{s1}) for source conditions does not make sense in general.
However, we may use (\ref{s2}) to propose the replacement of the form
$$
\l \xi_0-\xi^\dag, x-x^\dag\r \le \beta \|x-x^\dag\|^{1-\nu} \|T(x-x^\dag)\|^\nu, \quad \forall x\in \X.
$$
Considering the $p$-convexity of $\Theta_F$, we may further modify this into the form
\begin{equation}\label{source}
\l \xi_0-\xi^\dag, x-x^\dag\r \le \beta \left[D_{\xi^\dag} \Theta_F(x, x^\dag)\right]^{\frac{1-\nu}{p}} \|T(x-x^\dag)\|^\nu,
\quad \forall x\in \X
\end{equation}
for some $0<\nu\le 1$ and $\beta \ge 0$. We therefore obtain source conditions formulated as
variational inequalities, whose analog have already been introduced in \cite{KH2010}. We will use (\ref{source})
as our source conditions to derive convergence rates.

\begin{theorem}\label{T3.1.1}
Let $\X$ and $\Y$ be Banach spaces, let $\Theta:\X\to (-\infty, \infty]$ be a proper, lower
semi-continuous, $p$-convex function for some $p\ge 2$, let $\{\a_n\}$ be a sequence
satisfying (\ref{1.5}), and let $F$ satisfy Assumption \ref{A1}.
If the source condition (\ref{source}) is satisfied with $0<\nu\le 1$ and if
$\vep:=(K_0+K_1)\|\xi_0-\xi^\dag\|^{\frac{1}{p-1}}$ is sufficiently small, then for the integer $n_\d$
determined by either Rule \ref{Rule2} or Rule \ref{Rule3} with $\tau>1$ there holds
$$
D_{\xi^\dag} \Theta_F(x_{n_\d}^\d, x^\dag)\le C \beta^{\frac{p}{p-1+\nu}} \d^{\frac{p\nu}{p-1+\nu}}
$$
and thus
$$
\|x_{n_\d}^\d-x^\dag\| \le C \beta^{\frac{1}{p-1+\nu}} \d^{\frac{\nu}{p-1+\nu}},
$$
where $C$ is a constant depending only on $p$, $\gamma$, $\theta$, $\tau$ and $\nu$.
\end{theorem}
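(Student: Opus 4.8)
The plan is to establish a recursive inequality for the Bregman distances $D_n := D_{\xi^\dag}\Theta_F(x_n^\d,x^\dag)$ along the iteration, valid for $0\le n\le\hat n_\d$, that exhibits how the source condition (\ref{source}) forces these distances to decay geometrically until the noise term $\delta$ dominates, and then to cash this out at $n=n_\d$ using the information that the stopping rule gives control at two consecutive indices. First I would return to inequality (\ref{2.20.1}) from the proof of Lemma~\ref{L3.1.1}, which reads
$$
\|y^\d-F(x_n^\d)-F'(x_n^\d)(x_{n+1}^\d-x_n^\d)\|^p+\a_n D_{n+1}
\le \|y^\d-F(x_n^\d)-F'(x_n^\d)(x^\dag-x_n^\d)\|^p+\a_n\l\xi_0-\xi^\dag,x_{n+1}^\d-x^\dag\r,
$$
and, instead of bounding the inner product crudely by Young's inequality, apply the source condition (\ref{source}) to the term $\l\xi_0-\xi^\dag,x_{n+1}^\d-x^\dag\r$ (after writing $x_{n+1}^\d-x^\dag=e_{n+1}^\d$). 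This produces a bound of the shape $\beta [D_{n+1}]^{(1-\nu)/p}\|Te_{n+1}^\d\|^\nu$. Then I would use (\ref{3.8.20}) together with Assumption~\ref{A1}(d) and the already-established estimates (\ref{est1})--(\ref{est2}) to replace $\|Te_{n+1}^\d\|$ by a multiple of $\|y^\d-y-Te_{n+1}^\d\|+\|y^\d-F(x_n^\d)-F'(x_n^\d)(x_{n+1}^\d-x_n^\d)\|+\vep(\|Te_n^\d\|+\|Te_{n+1}^\d\|)$, absorbing the $\vep$-terms for $\vep$ small.

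Next I would estimate the right-hand residual $\|y^\d-F(x_n^\d)-F'(x_n^\d)(x^\dag-x_n^\d)\|$ by $\delta+\tfrac32(K_0+K_1)\|e_n^\d\|\|Te_n^\d\|$ as in (\ref{2.20.4}), and combine with the source-condition bound via the split Young inequality $\beta a^{(1-\nu)/p}b^\nu\le \eta a + C_\eta(\beta,\nu,p)b^{p\nu/(p-1+\nu)}$ applied with $a=D_{n+1}$, so that $\eta\a_n D_{n+1}$ is absorbed into the left side $\a_n D_{n+1}$ with a fixed small $\eta<1$. After this absorption the recursion takes the form
$$
D_{n+1}\le \frac{C}{\a_n}\Big(\delta^p+\vep^p\|Te_n^\d\|^p\Big)+C\beta^{\frac{p}{p-1+\nu}}\Big(\frac{\|y^\d-F(x_n^\d)-F'(x_n^\d)(\cdot)\|+\delta}{\a_n}\cdot\text{(bounded)}\Big)^{\cdots},
$$
which, after substituting the already-proven bounds $\|Te_n^\d\|\le C\|\xi_0-\xi^\dag\|^{1/(p-1)}\theta^{1/p}\a_n^{1/p}$ from (\ref{est2}) and $\delta\le\mu\|\xi_0-\xi^\dag\|^{1/(p-1)}\a_n^{1/p}$ from (\ref{2.21.1}) (valid since $n<\hat n_\d$), simplifies to a clean bound of the form $D_{n+1}\le C\beta^{p/(p-1+\nu)}\a_n^{\nu/(p-1+\nu)}+C\,\delta^p/\a_n$ — in fact I expect the argument to show directly that $D_n\le C\beta^{p/(p-1+\nu)}\a_{n-1}^{\nu/(p-1+\nu)}+C\delta^p/\a_{n-1}$ for all $1\le n\le\hat n_\d$, perhaps after an auxiliary induction to control the $\vep$-feedback term.

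Finally I would evaluate at $n=n_\d$. For Rule~\ref{Rule2} or Rule~\ref{Rule3}, at the index $m:=n_\d-1$ one has $\|F(x_m^\d)-y^\d\|>\tau\delta$ combined with the stopping inequality, and since $m<\hat n_\d$ (because $n_\d\le\hat n_\d$ by Lemma~\ref{L3.1.1}, and one checks $n_\d-1<\hat n_\d$ as well) one has $\delta\le\mu\|\xi_0-\xi^\dag\|^{1/(p-1)}\a_m^{1/p}$; conversely the stopping rule at index $n_\d$ (and the companion index) gives $\|F(x_{n_\d}^\d)-y^\d\|\le\tau\delta$, which via Assumption~\ref{A1} and the residual estimates translates into a \emph{lower} bound $\a_{n_\d}^{1/p}\le C\delta/\|\xi_0-\xi^\dag\|^{1/(p-1)}$ up to the $\vep$-correction, i.e. $\a_{n_\d}\approx\delta^p$. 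Plugging $\a_{n_\d-1}\le\theta\a_{n_\d}\le C\delta^p$ into the recursive bound $D_{n_\d}\le C\beta^{p/(p-1+\nu)}\a_{n_\d-1}^{\nu/(p-1+\nu)}+C\delta^p/\a_{n_\d-1}$ and using also $\a_{n_\d-1}\ge C^{-1}\delta^p$ (from the definition of $n_\d-1$ via $\delta\le\mu\|\xi_0-\xi^\dag\|^{1/(p-1)}\a_{n_\d-1}^{1/p}$) to bound the second term by $C$, one obtains $D_{\xi^\dag}\Theta_F(x_{n_\d}^\d,x^\dag)\le C\beta^{p/(p-1+\nu)}\delta^{p\nu/(p-1+\nu)}$, and then (\ref{9.16.1}) yields the norm estimate. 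The main obstacle I anticipate is the bookkeeping around the $\vep=(K_0+K_1)\|\xi_0-\xi^\dag\|^{1/(p-1)}$ feedback terms: because $\|Te_{n+1}^\d\|$ appears on both sides through (\ref{3.8.20}), and because the Young-type splitting must be done so that the absorbed constant $\eta$ stays strictly below $1$ uniformly in $n$, some care is needed to see that the recursion closes; the reason it works is precisely the two-index control built into Rules~\ref{Rule2} and~\ref{Rule3}, which is why Rule~\ref{Rule1} alone does not suffice for the rate.
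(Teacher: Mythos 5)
Your opening move coincides with the paper's: plug the source condition (\ref{source}) into the minimality inequality (\ref{2.20.1}) and split by Young's inequality so that the $D_{\xi^\dag}\Theta_F(x_{n+1}^\d,x^\dag)$-term is absorbed (this is the paper's (\ref{3.7.2}), leading to the key bound (\ref{3.8.10})). But the endgame has genuine gaps. First, your claim that the stopping rule ``translates into $\a_{n_\d}^{1/p}\le C\d/\|\xi_0-\xi^\dag\|^{1/(p-1)}$, i.e.\ $\a_{n_\d}\approx\d^p$'' is unjustified and false in general: a small discrepancy does not force $\a_n$ to be small (the iteration may stop at $n_\d=0$ with $\a_0$ fixed while $\d\to0$, and the theorem still holds in that case). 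The correct use of the two-index rules \ref{Rule2}/\ref{Rule3} is the opposite direction: they give $\|F(x_{n_\d}^\d)-y^\d\|+\|F(x_{n_\d-1}^\d)-y^\d\|\le 2\tau\d$, hence (for small $\vep$) $\|Te_{n_\d}^\d\|+\|Te_{n_\d-1}^\d\|\le C\d$, so that in (\ref{3.8.10}) both the source term $(\beta\|Te_{n_\d}^\d\|^\nu)^{p/(p-1+\nu)}$ and the feedback term $\vep^p\|Te_{n_\d-1}^\d\|^p/\a_{n_\d-1}$ are of the right order; no upper bound on $\a_{n_\d}$ is needed or available. Second, your treatment of the noise term is fatal to the rate: you bound $\d^p/\a_{n_\d-1}$ by a constant using only $\a_{n_\d-1}\gtrsim\d^p$ (which comes from $n_\d-1<\hat n_\d$ and depends on $\|\xi_0-\xi^\dag\|$, not $\beta$), and then assert the conclusion $D_{\xi^\dag}\Theta_F(x_{n_\d}^\d,x^\dag)\le C\beta^{p/(p-1+\nu)}\d^{p\nu/(p-1+\nu)}$; but ``desired rate plus a fixed constant'' is not the desired rate. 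What is missing is precisely the paper's Lemma \ref{L8.3.5}: for all $n<n_\d$ one has $\a_n\ge c_1(\d^{p-1}/\beta)^{p/(p-1+\nu)}$, which turns $\d^p/\a_{n_\d-1}$ into $C\beta^{p/(p-1+\nu)}\d^{p\nu/(p-1+\nu)}$. That lower bound is obtained by combining $\tau\d\le\|F(x_n^\d)-y^\d\|$ (before stopping) with the refined residual estimate $\|F(x_n^\d)-y^\d\|\le(\frac{\tau+1}{2}+C\vep)\d+C(\beta\a_n^{\frac{p-1+\nu}{p}})^{\frac{1}{p-1}}$, which in turn requires the source-condition-improved bound $\|Te_n^\d\|\le C(\d+(\beta\a_n^{\frac{p-1+\nu}{p}})^{\frac{1}{p-1}})$ of Lemma \ref{L3.7.1} together with Lemma \ref{L3.8.2} — none of which your proposal establishes.

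A related, smaller defect: your intermediate ``clean recursion'' $D_{n+1}\le C\beta^{p/(p-1+\nu)}\a_n^{\nu/(p-1+\nu)}+C\d^p/\a_n$ does not follow from substituting (\ref{est2}) into the term $\vep^p\|Te_n^\d\|^p/\a_n$, since that substitution yields a $\d$-independent constant of size $\vep^p\|\xi_0-\xi^\dag\|^{p^*}$, again destroying the rate; one must either use the refined estimate of Lemma \ref{L3.7.1} for $\|Te_n^\d\|$ or, as the paper does, evaluate only at $n=n_\d$ and use the two-index stopping information to bound $\|Te_{n_\d-1}^\d\|\le C\d$. (You would also need to treat the case $n_\d=0$ separately via (\ref{9.15.5}), and note that the paper's constant depends only on $p,\gamma,\theta,\tau,\nu$, whereas your intermediate constants pick up $\|\xi_0-\xi^\dag\|$.) So the overall strategy points in the right direction, but as written the proof does not close.
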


We will complete the proof of Theorem \ref{T3.1.1} by proving a series of lemmas.

\begin{lemma}\label{L3.7.1}
Under the same conditions in Theorem \ref{T3.1.1}, if the source condition (\ref{source}) holds
and $\vep:=(K_0+K_1)\|\xi_0-\xi^\dag\|^{\frac{1}{p-1}}$ is sufficiently small, then there holds
\begin{align*}
\|T(x_n^\d-x^\dag)\| \le C \left(\d +\left(\beta \a_n^{\frac{p-1+\nu}{p}}\right)^{\frac{1}{p-1}}\right)
\end{align*}
for all $0\le n\le \hat{n}_\d$, where $\hat{n}_\d$ is the integer defined by (\ref{nhat}).
\end{lemma}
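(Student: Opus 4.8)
The plan is to refine the a~priori estimate \eqref{est2} for $\|T e_n^\d\|$ from Lemma~\ref{L3.1.1}, which only gives a bound of order $\a_n^{1/p}$, into the sharper bound claimed here by exploiting the source condition \eqref{source}. The starting point is inequality \eqref{2.20.1}, which after applying the identity \eqref{4.3.1} reads
\begin{align*}
\|y^\d-F(x_n^\d)-F'(x_n^\d)(x_{n+1}^\d-x_n^\d)\|^p +\a_n D_{\xi^\dag}\Theta_F(x_{n+1}^\d,x^\dag)
\le \|y^\d-F(x_n^\d)-F'(x_n^\d)(x^\dag-x_n^\d)\|^p +\a_n \l\xi_0-\xi^\dag, x_{n+1}^\d-x^\dag\r.
\end{align*}
Instead of estimating the last term by Young's inequality as in Lemma~\ref{L3.1.1}, I would now insert the source condition \eqref{source} to get $\l\xi_0-\xi^\dag,x_{n+1}^\d-x^\dag\r \le \beta [D_{\xi^\dag}\Theta_F(x_{n+1}^\d,x^\dag)]^{(1-\nu)/p}\|Te_{n+1}^\d\|^\nu$. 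Together with the tangential-cone-type consequences of Assumption~\ref{A1}(d) (namely $\|y^\d-F(x_n^\d)-F'(x_n^\d)(x^\dag-x_n^\d)\|\le \d+\frac32(K_0+K_1)\|e_n^\d\|\|Te_n^\d\|$ and the triangle inequality \eqref{3.8.20} relating $\|y^\d-y-Te_{n+1}^\d\|$ to the residual of the linearized problem), one obtains a recursive inequality that couples $\|Te_{n+1}^\d\|$, $\|Te_n^\d\|$, $D_{\xi^\dag}\Theta_F(x_{n+1}^\d,x^\dag)$, and $\d$, with small coefficients $\vep$ in front of the $\|Te_n^\d\|$ terms.

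Next I would disentangle this coupled inequality by Young's inequality, splitting the $\beta[D_{\xi^\dag}\Theta_F]^{(1-\nu)/p}\|Te_{n+1}^\d\|^\nu$ cross term so that the $D_{\xi^\dag}\Theta_F(x_{n+1}^\d,x^\dag)$ piece can be absorbed into the left-hand side of the displayed inequality (using exponents $p/(1-\nu)$ and its conjugate) and the remaining power of $\|Te_{n+1}^\d\|$ stays controllable. After absorption, one is left with an estimate of the form
\begin{align*}
\|Te_{n+1}^\d\| \le C\left(\d + \vep\|Te_n^\d\| + \left(\beta\,\a_n^{(p-1+\nu)/p}\right)^{1/(p-1)}\right),
\end{align*}
valid for $0\le n<\hat n_\d$, with the $n=0$ case coming from the scaling condition \eqref{9.15.2}. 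Then an induction on $n$, choosing $\vep$ small enough that the factor $C\vep$ is, say, below $1/2$, and using that $\a_n\le\theta\a_{n+1}$ from \eqref{1.5} to pass the $\a_n$-dependence down the recursion without blowing up the geometric sum, yields the stated bound $\|T(x_n^\d-x^\dag)\|\le C(\d+(\beta\a_n^{(p-1+\nu)/p})^{1/(p-1)})$ for all $0\le n\le\hat n_\d$.

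I expect the main obstacle to be the bookkeeping in the Young's-inequality splitting of the cross term: one must choose the split so that simultaneously (i) the Bregman-distance term is absorbed with a coefficient strictly less than the $\a_n$ coming from the left side, (ii) the leftover power of $\|Te_{n+1}^\d\|$ is at most linear (or absorbable against another small-$\vep$ term), and (iii) the $\beta$- and $\a_n$-dependence comes out with exactly the exponents $\frac{1}{p-1}$ and $\frac{p-1+\nu}{p}\cdot\frac{1}{p-1}$ claimed. Getting all three to hold at once forces the particular conjugate exponents, and tracking the induction hypothesis on $\|Te_n^\d\|$ through this (so that the $\vep\|Te_n^\d\|$ term really is a lower-order perturbation and the constant $C$ does not grow with $n$) is the delicate part; everything else is the same tangential-cone estimates already used in Lemma~\ref{L3.1.1}.
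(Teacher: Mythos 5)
Your recursion is exactly the paper's argument: insert the source condition (\ref{source}) into (\ref{2.20.1}), split the cross term by Young's inequality with exponents $p/(1-\nu)$ and $p/(p-1+\nu)$ so the Bregman term is absorbed on the left, take $p$-th roots, pass through (\ref{3.8.20}) and Assumption \ref{A1}(d) to get the coupled inequality in $\|Te_{n+1}^\d\|$, $\|Te_n^\d\|$, $\d$, apply Young's inequality once more so the leftover power of $\|Te_{n+1}^\d\|$ becomes linear with coefficient $\nu/(p-1+\nu)<1$ and can be absorbed, and then induct using $\a_n\le\theta\a_{n+1}$ and smallness of $\vep$. All of that is fine and coincides with the paper's proof of this lemma.

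The gap is the base case. You propose to start the induction from (\ref{9.15.2}), which gives $\|Te_0\|\le \gamma^{\frac{1}{p-1}}\a_0^{\frac{1}{p}}\|\xi_0-\xi^\dag\|^{\frac{1}{p-1}}$. This is not of the claimed form $C\bigl(\d+(\beta\a_0^{\frac{p-1+\nu}{p}})^{\frac{1}{p-1}}\bigr)$ with $C$ depending only on $p,\gamma,\theta,\tau,\nu$: no relation between $\|\xi_0-\xi^\dag\|$ and $\beta$ (or $\d$) is assumed, so the claim already fails at $n=0$, and the spurious term propagates through the recursion as $(C\vep)^n\gamma^{\frac{1}{p-1}}\a_0^{\frac{1}{p}}\|\xi_0-\xi^\dag\|^{\frac{1}{p-1}}$, which you cannot dominate by $\d$ or $(\beta\a_n^{\frac{p-1+\nu}{p}})^{\frac{1}{p-1}}$ either. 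The correct base case must itself come from the source condition: taking $x=x_0$ in (\ref{source}) and using $D_{\xi^\dag}\Theta_F(x_0,x^\dag)\le D_{\xi^\dag}\Theta_F(x_0,x^\dag)+D_{\xi_0}\Theta_F(x^\dag,x_0)=\l\xi_0-\xi^\dag,x_0-x^\dag\r$ gives $D_{\xi^\dag}\Theta_F(x_0,x^\dag)\le\bigl(\beta\|Te_0\|^\nu\bigr)^{\frac{p}{p-1+\nu}}$; then the $p$-convexity yields $\|e_0\|\le\gamma\bigl(\beta\|Te_0\|^\nu\bigr)^{\frac{1}{p-1+\nu}}$, and combining $\|Te_0\|\le\|T\|\,\|e_0\|$ with the scaling $\gamma\|T\|\le\a_0^{\frac{1}{p}}$ and solving for $\|Te_0\|$ gives $\|Te_0\|\le\bigl(\beta\a_0^{\frac{p-1+\nu}{p}}\bigr)^{\frac{1}{p-1}}$, which is the bound the induction needs. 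With that replacement your argument goes through; as written, the $n=0$ step would fail.
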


\begin{proof}
We will use (\ref{2.20.1}). In view of the Young's inequality, it follows from (\ref{source}) that
\begin{align*}
\l  \xi&_0-\xi^\dag, x_{n+1}^\d-x^\dag\r
 \le \frac{1-\nu}{p} D_{\xi^\dag} \Theta_F(x_{n+1}^\d, x^\dag)
 +\frac{p-1+\nu}{p} \left(\beta  \|T e_{n+1}^\d\|^\nu\right)^{\frac{p}{p-1+\nu}}.
\end{align*}
Plugging this into (\ref{2.20.1}) gives
\begin{align}\label{3.7.2}
& \|  y^\d -F(x_n^\d)-F'(x_n^\d) (x_{n+1}^\d-x_n^\d)\|^p +\frac{p-1+\nu}{p} \a_n D_{\xi^\dag} \Theta_F(x_{n+1}^\d, x^\dag) \nonumber\\
& \le \|y^\d-F(x_n^\d)-F'(x_n^\d) (x^\dag-x_n^\d)\|^p
+\frac{p-1+\nu}{p} \a_n \left(\beta \|T e_{n+1}^\d\|^\nu\right)^{\frac{p}{p-1+\nu}}.
\end{align}
This inequality implies immediately that
\begin{align}\label{3.7.3}
\| y^\d  -F(x_n^\d)-F'(x_n^\d) (x_{n+1}^\d-x_n^\d)\|
& \le \|y^\d-F(x_n^\d)-F'(x_n^\d) (x^\dag-x_n^\d)\| \nonumber\\
& \quad \, + \a_n^{\frac{1}{p}} \left(\beta \|T e_{n+1}^\d\|^\nu\right)^{\frac{1}{p-1+\nu}}.
\end{align}
In view of (\ref{3.8.20}), we can obtain from (\ref{3.7.3}) that
\begin{align*}
\|y^\d-y -T e_{n+1}^\d \| & \le \d + \|(T-F'(x_n^\d)) e_{n+1}^\d\|
+ \a_n^{\frac{1}{p}} \left(\beta \|T e_{n+1}^\d\|^\nu\right)^{\frac{1}{p-1+\nu}}\\
&\quad \, + 2\|y-F(x_n^\d)- F'(x_n^\d) (x^\dag-x_n^\d)\|.
\end{align*}
With the help of Assumption \ref{A1} we then obtain
\begin{align}\label{3.7.4}
\|  y^\d  -y -T e_{n+1}^\d \| & \le \d + 3(K_0+K_1) \|e_n^\d\| \|T e_n^\d\|  + K_0 \|e_n^\d \| \|T e_{n+1}^\d\| \nonumber\\
& \quad \, + K_1\|e_{n+1}^\d\| \|T e_n^\d\|  + \a_n^{\frac{1}{p}} \left(\beta \|T e_{n+1}^\d\|^\nu\right)^{\frac{1}{p-1+\nu}}.
\end{align}
By employing the estimate on $\|e_n^\d\|$ from Lemma \ref{L3.1.1}, we can obtain from (\ref{3.7.4}) that
\begin{align*}
\|T e_{n+1}^\d \| & \le 2 \d + C \vep \|T e_n^\d\|  + C \vep  \|T e_{n+1}^\d\|
+ \a_n^{\frac{1}{p}} \left(\beta \|T e_{n+1}^\d\|^\nu\right)^{\frac{1}{p-1+\nu}}.
\end{align*}
By using the Young's inequality again we can derive that
\begin{align*}
\|T e_{n+1}^\d \| & \le 2 \d + C \vep \|T e_n^\d\|  + \left(\frac{\nu}{p-1+\nu} +C \vep\right) \|T e_{n+1}^\d\| \\
& \quad \, + \frac{p-1}{p-1+\nu} \left(\beta \a_n^{\frac{p-1+\nu}{p}} \right)^{\frac{1}{p-1}}.
\end{align*}
Therefore if $\vep$ is sufficiently small, then we can obtain
\begin{align*}
\|T e_{n+1}^\d \| & \le \frac{3p}{p-1} \d + C \vep \|T e_n^\d\|
+ 2 \left( \beta \a_n^{\frac{p-1+\nu}{p}} \right)^{\frac{1}{p-1}}.
\end{align*}
Thus, in view of $\a_n\le \theta \a_{n+1}$, if we further assume that $\vep$ is sufficiently small, then
an induction argument would show that
$$
\|T e_n^\d\| \le \frac{4 p}{p-1} \d +  3 \left(\beta (\theta\a_n)^{\frac{p-1+\nu}{p}} \right)^{\frac{1}{p-1}}
$$
for all $0\le n\le \hat{n}_\d$ if we could show that this is also true for $\|T e_0\|$. Observing that
\begin{align}\label{9.15.5}
D_{\xi^\dag} \Theta_F(x_0, x^\dag)
&\le D_{\xi^\dag}\Theta_F(x_0, x^\dag) + D_{\xi_0}\Theta_F(x^\dag, x_0) = \l \xi_0-\xi^\dag, x_0-x^\dag\r \nonumber\\
&\le \beta \left[D_{\xi^\dag} \Theta_F(x_0, x^\dag)\right]^{\frac{1-\nu}{p}} \|T e_0\|^\nu.
\end{align}
This implies that $D_{\xi^\dag} \Theta_F(x_0, x^\dag) \le \left(\beta \|T e_0\|^\nu\right)^{\frac{p}{p-1+\nu}}$
and consequently by the $p$-convexity of $\Theta$ we have
$$
\|x_0-x^\dag\|\le \gamma (\beta\|T e_0\|^\nu)^{\frac{1}{p-1+\nu}}.
$$
Therefore
$$
\|T e_0\|\le \|T\| \|e_0\| \le \gamma \|T\| \left(\beta \|T e_0\|^\nu\right)^{\frac{1}{p-1+\nu}}.
$$
In view of $\|T\|\le \a_0^{\frac{1}{p}}/\gamma$ we can obtain
\begin{equation}\label{3.8.3}
\|T e_0\| \le \left( \gamma \|T\| \beta^{\frac{1}{p-1+\nu}}\right)^{\frac{p-1+\nu}{p-1}}
\le \left(\beta \a_0^{\frac{p-1+\nu}{p}}\right)^{\frac{1}{p-1}}.
\end{equation}
We therefore complete the proof. \hfill $\Box$
\end{proof}

\begin{lemma}\label{L3.8.2}
Under the same conditions in Theorem \ref{T3.1.1}, if $\vep:=(K_0+K_1)\|\xi_0-\xi^\dag\|^{\frac{1}{p-1}}$
is sufficiently small, then there holds
\begin{equation}\label{3.8.1}
\|y^\d-y-T e_n^\d\| \le \left(\frac{\tau+1}{2} +C\vep \right) \d
+ C \left(\beta \a_n^{\frac{p-1+\nu}{p}} \right)^{\frac{1}{p-1}}
\end{equation}
for all $0\le n\le \hat{n}_\d$.
\end{lemma}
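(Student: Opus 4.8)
The plan is to upgrade the residual bound (\ref{3.7.4}) obtained inside the proof of Lemma \ref{L3.7.1}, now that we have at our disposal both the estimate $\|e_n^\d\|\le C\|\xi_0-\xi^\dag\|^{1/(p-1)}$ from Lemma \ref{L3.1.1} and the estimate $\|Te_n^\d\|\le C(\d+(\beta\a_n^{(p-1+\nu)/p})^{1/(p-1)})$ from Lemma \ref{L3.7.1}; no further induction is required. The case $n=0$ is immediate: by (\ref{1.3}) and the bound $\|Te_0\|\le(\beta\a_0^{(p-1+\nu)/p})^{1/(p-1)}$ recorded in (\ref{3.8.3}), the triangle inequality gives $\|y^\d-y-Te_0\|\le\d+(\beta\a_0^{(p-1+\nu)/p})^{1/(p-1)}$, which already has the asserted form since $1\le(\tau+1)/2$.

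For $1\le n\le\hat{n}_\d$ I would apply (\ref{3.7.4}) with $n$ replaced by $n-1$, which is legitimate because $x_{n-1}^\d,x_n^\d,x^\dag\in B_\rho(x^\dag)\cap D(F)$ by Lemma \ref{L3.1.1}. This bounds $\|y^\d-y-Te_n^\d\|$ by $\d$ plus the three ``mixed'' quantities $3(K_0+K_1)\|e_{n-1}^\d\|\,\|Te_{n-1}^\d\|$, $K_0\|e_{n-1}^\d\|\,\|Te_n^\d\|$ and $K_1\|e_n^\d\|\,\|Te_{n-1}^\d\|$, plus the term $\a_{n-1}^{1/p}(\beta\|Te_n^\d\|^\nu)^{1/(p-1+\nu)}$. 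In each mixed quantity I would estimate the $\|e^\d\|$-factor by Lemma \ref{L3.1.1} (so that $(K_0+K_1)\|e^\d\|\le C\vep$ and likewise with $K_0$ or $K_1$ alone) and the $\|Te^\d\|$-factor by Lemma \ref{L3.7.1}, and use the comparison (\ref{1.5}) between $\a_{n-1}$ and $\a_n$; each mixed quantity is then bounded by $C\vep\d+C\vep(\beta\a_n^{(p-1+\nu)/p})^{1/(p-1)}$, so collectively they contribute only a $C\vep\,\d$ to the coefficient of $\d$.

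The delicate point is the term $\a_{n-1}^{1/p}(\beta\|Te_n^\d\|^\nu)^{1/(p-1+\nu)}$, which must not simply be swallowed by a generic constant. Using $\|Te_n^\d\|\le C(\d+(\beta\a_n^{(p-1+\nu)/p})^{1/(p-1)})$ together with the elementary inequality $(a+b)^s\le a^s+b^s$ for $0\le s\le 1$ (applied with $s=\nu$ and then with $s=1/(p-1+\nu)$, both legitimate since $p\ge 2$ and $0<\nu\le1$), I would split this term into a piece proportional to $\a_{n-1}^{1/p}\bigl(\beta\,(\beta\a_n^{(p-1+\nu)/p})^{\nu/(p-1)}\bigr)^{1/(p-1+\nu)}$ and a piece proportional to $\a_{n-1}^{1/p}\beta^{1/(p-1+\nu)}\d^{\nu/(p-1+\nu)}$. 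For the first piece, collecting the exponents of $\beta$ and of $\a$ and invoking (\ref{1.5}) shows it is $\le C(\beta\a_n^{(p-1+\nu)/p})^{1/(p-1)}$. For the second piece I would use Young's inequality in the weighted form $ab\le\eta\,a^{s}+C(\eta)\,b^{t}$, valid for every $\eta>0$, with $s=(p-1+\nu)/\nu$ and $t=(p-1+\nu)/(p-1)$ (so $1/s+1/t=1$), taking $a=\d^{\nu/(p-1+\nu)}$ and $b=\a_{n-1}^{1/p}\beta^{1/(p-1+\nu)}$: since $a^{s}=\d$ and $b^{t}=(\beta\a_{n-1}^{(p-1+\nu)/p})^{1/(p-1)}$, and again using (\ref{1.5}), this yields the bound $\eta'\d+C(\eta')(\beta\a_n^{(p-1+\nu)/p})^{1/(p-1)}$ with $\eta'$ proportional to $\eta$, hence as small as we please.

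Choosing $\eta$ (equivalently $\eta'$) so that $\eta'\le(\tau-1)/2$, the coefficient of $\d$ becomes $1+(\tau-1)/2+C\vep=(\tau+1)/2+C\vep$, while every remaining term has been absorbed into $C(\beta\a_n^{(p-1+\nu)/p})^{1/(p-1)}$; this is exactly (\ref{3.8.1}). The main point, rather than a genuine obstacle, is precisely this last balancing act: one must extract from the residual--nonlinearity term $\a_{n-1}^{1/p}(\beta\|Te_n^\d\|^\nu)^{1/(p-1+\nu)}$ only an \emph{arbitrarily small} multiple of $\d$, dumping the surplus into the harmless $\a_n$-dependent term, so that the coefficient of $\d$ lands at $(\tau+1)/2$ up to the $C\vep$ correction. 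It is this sharpness (a constant strictly below $\tau$ in front of $\d$) that is later needed to run the stopping-rule comparison for Rule \ref{Rule2} and Rule \ref{Rule3} in the proof of Theorem \ref{T3.1.1}.
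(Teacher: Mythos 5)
Your proposal is correct and follows essentially the same route as the paper: both start from the residual estimate (\ref{3.7.4}), absorb the $K_0,K_1$ cross terms via the bounds of Lemmas \ref{L3.1.1} and \ref{L3.7.1}, split the term $\a^{1/p}(\beta\|Te\|^\nu)^{1/(p-1+\nu)}$ with $(a+b)^t\le a^t+b^t$, and then apply the weighted Young inequality to extract exactly $\frac{\tau-1}{2}\d$ from the $\d$-dependent piece, treating $n=0$ separately via (\ref{3.8.3}). No substantive differences.
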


\begin{proof}
We will use (\ref{3.7.4}). In view of the estimates on $\|e_n^\d\|$ given in Lemma \ref{L3.1.1}, we can
obtain from (\ref{3.7.4}) that
\begin{align*}
\|y^\d-y-T e_{n+1}^\d\| \le \d +C\vep \|Te_n^\d\| +C\vep \|T e_{n+1}^\d\|
+ \a_n^{\frac{1}{p}} \left(\beta \|T e_{n+1}^\d\|^\nu\right)^{\frac{1}{p-1+\nu}}.
\end{align*}
Using the estimates on $\|T e_n^\d\|$ in Lemma \ref{L3.7.1}, the fact $\a_n \le \theta \a_{n+1}$ and the
inequality $(a+b)^t\le a^t+b^t$ for $a, b\ge 0$ and $0\le t \le 1$, we have
\begin{align*}
\|y^\d-y-T e_{n+1}^\d\| & \le \d +C \vep \left(\d + \left(\beta \a_{n+1}^{\frac{p-1+\nu}{p}}\right)^{\frac{1}{p-1}}\right) \\
&\quad \, + C \a_{n+1}^{\frac{1}{p}} \beta^{\frac{1}{p-1+\nu}}
\left(\d +\left(\beta \a_{n+1}^{\frac{p-1+\nu}{p}}\right)^{\frac{1}{p-1}}\right)^{\frac{\nu}{p-1+\nu}}\\
& \le (1+ C\vep ) \d +  C \left(\beta \a_{n+1}^{\frac{p-1+\nu}{p}} \right)^{\frac{1}{p-1}}\\
&\quad \, + C \a_{n+1}^{\frac{1}{p}} \beta^{\frac{1}{p-1+\nu}} \d^{\frac{\nu}{p-1+\nu}}.
\end{align*}
By using the Young's inequality we have
$$
C \a_{n+1}^{\frac{1}{p}} \beta^{\frac{1}{p-1+\nu}} \d^{\frac{\nu}{p-1+\nu}}
\le \frac{\tau-1}{2} \d + C' \left(\beta \a_{n+1}^{\frac{p-1+\nu}{p}} \right)^{\frac{1}{p-1}}.
$$
Combining the above two estimates we therefore obtain (\ref{3.8.1}) for $1\le n\le \hat{n}_\d$.
It remains only to check (\ref{3.8.1}) for $n=0$. By using $\|y^\d-y\|\le \d$ and (\ref{3.8.3}),
this is obvious. \hfill $\Box$
\end{proof}

\begin{lemma}\label{L8.3.5}
Under the same conditions in Theorem \ref{T3.1.1}, there exists a positive universal constant $c_1$ such that
$$
\a_n \ge c_1 \left(\frac{\d^{p-1}}{\beta}\right)^{\frac{p}{p-1+\nu}}
$$
for all $0\le n<n_\d$, where $n_\d$ is the integer defined by either Rule \ref{Rule2} or Rule \ref{Rule3} with $\tau>1$.
\end{lemma}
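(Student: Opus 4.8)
The plan is to convert the defining inequality of the stopping rule into a lower bound for the residual $\|F(x_m^\d)-y^\d\|$ at a suitable step $m<n_\d$, and then to play this against an upper bound for the residual in which the regularization parameter $\a_m$ enters with the source exponent $\frac{p-1+\nu}{p}$; the two together pin $\a_m$, and hence all $\a_n$ with $0\le n<n_\d$, from below.

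The first step is to prove that, for all $0\le n\le \hat n_\d$,
\begin{align*}
\|F(x_n^\d)-y^\d\|\le \left(\frac{\tau+1}{2}+C\vep\right)\d + C\left(\beta\,\a_n^{\frac{p-1+\nu}{p}}\right)^{\frac{1}{p-1}}.
\end{align*}
I would obtain this from the splitting
\begin{align*}
\|F(x_n^\d)-y^\d\|\le \|y^\d-y-Te_n^\d\| + \|F(x_n^\d)-y-Te_n^\d\|,
\end{align*}
noting that the first term is estimated directly by Lemma \ref{L3.8.2}, while the second term equals $\|F(x_n^\d)-F(x^\dag)-F'(x^\dag)e_n^\d\|$ (since $y=F(x^\dag)$ and $T=F'(x^\dag)$) and is therefore bounded by $\tfrac12(K_0+K_1)\|e_n^\d\|\,\|Te_n^\d\|$ via the consequences of Assumption \ref{A1}(b),(d) recorded after its statement. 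Inserting the bound on $\|e_n^\d\|$ from Lemma \ref{L3.1.1} turns this into $C\vep\,\|Te_n^\d\|$, and Lemma \ref{L3.7.1} bounds $\|Te_n^\d\|$ by $C\big(\d+(\beta\a_n^{(p-1+\nu)/p})^{1/(p-1)}\big)$; absorbing the resulting $C\vep$ contributions into the two terms already present yields the displayed bound.

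For the second step, recall that $n_\d\le\hat n_\d$ by Lemma \ref{L3.1.1}, so the bound above is available at every step before $n_\d$. I would then exhibit an index $m$ with $\max\{0,n_\d-2\}\le m<n_\d$ at which $\|F(x_m^\d)-y^\d\|>\tau\d$: for Rule \ref{Rule2} take $m=0$ when $n_\d=1$ (the ``otherwise'' clause), and when $n_\d\ge2$ use that, $n_\d$ being the first step at which the averaged test holds, the averaged test fails at step $n_\d-1$, which forces $\max\{\|F(x_{n_\d-1}^\d)-y^\d\|,\|F(x_{n_\d-2}^\d)-y^\d\|\}>\tau\d$; for Rule \ref{Rule3} take $m=0$ when $n_\d=2$, and for $n_\d\ge3$ use the failure of the maximized test at step $n_\d-1$. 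Feeding $\|F(x_m^\d)-y^\d\|>\tau\d$ into the displayed bound and choosing $\vep$ so small that $\tfrac{\tau+1}{2}+C\vep<\tau$, we arrive at $\left(\tfrac{\tau-1}{2}-C\vep\right)\d\le C\left(\beta\a_m^{(p-1+\nu)/p}\right)^{1/(p-1)}$, which rearranges to $\a_m\ge c_1\big(\d^{p-1}/\beta\big)^{p/(p-1+\nu)}$ for a constant $c_1$ depending only on $p,\gamma,\theta,\tau,\nu$.

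Finally, since $\{\a_n\}$ is non-increasing and $\a_n/\a_{n+1}\le\theta$, for any $0\le n<n_\d$ we have $\a_n\ge\a_m$ when $n\le m$, and $\a_n\ge\a_{n_\d-1}\ge\a_m/\theta^{\,n_\d-1-m}\ge\a_m/\theta$ when $n>m$, the last inequality using $m\ge n_\d-2$; in both cases $\a_n\ge(c_1/\theta)\big(\d^{p-1}/\beta\big)^{p/(p-1+\nu)}$, which is the asserted estimate after renaming the constant. The only delicate point is the case analysis in the second step: one must check, for each of Rule \ref{Rule2} and Rule \ref{Rule3} and for the small values of $n_\d$, that a step at which the residual still exceeds $\tau\d$ can be found within two iterations of $n_\d$, so that $\a_m$ and $\a_{n_\d-1}$ are comparable up to the factor $\theta$; the analytic estimates are otherwise immediate consequences of Lemmas \ref{L3.1.1}, \ref{L3.7.1} and \ref{L3.8.2}.
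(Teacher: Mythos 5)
Your proposal is correct and follows essentially the same route as the paper: the key residual upper bound $\|F(x_n^\d)-y^\d\|\le\bigl(\frac{\tau+1}{2}+C\vep\bigr)\d+C\bigl(\beta\a_n^{\frac{p-1+\nu}{p}}\bigr)^{\frac{1}{p-1}}$ built from Lemmas \ref{L3.1.1}, \ref{L3.7.1} and \ref{L3.8.2}, played against the failure of the stopping test before $n_\d$, is exactly the paper's argument. The only (immaterial) difference is bookkeeping: the paper applies the failed test at every $1\le n<n_\d$ (treating $n_\d=1$ separately via (\ref{3.8.3})), whereas you isolate a single index $m\ge n_\d-2$ with residual exceeding $\tau\d$ and propagate the lower bound to all $n<n_\d$ using monotonicity and $\a_n\le\theta\a_{n+1}$.
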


\begin{proof}
If $n_\d=1$ we must have $\|F(x_0)-y^\d\|>\tau \d$. It then follows from Assumption \ref{A1} and (\ref{3.8.3}) that
\begin{align*}
(\tau-1) \d \le \|F(x_0)-y\| \le (1+C\vep) \|T e_0\| \le C\left(\beta \a_0^{\frac{p-1+\nu}{p}}\right)^{\frac{1}{p-1}}.
\end{align*}
This implies the desired estimate on $\a_0$. So we may assume that $n_\d\ge 2$. From the definition of $n_\d$ we
have for $1\le n<n_\d$ that
\begin{equation}\label{3.10.1}
\tau \d \le \max\left\{\|F(x_n^\d)-y^\d\|, \|F(x_{n-1}^\d)-y^\d\|\right\}.
\end{equation}
By using Lemma \ref{L3.8.2}, Assumption \ref{A1}, and the estimates in Lemma \ref{L3.1.1} we have
for all $0\le n\le \hat{n}_\d$ that
\begin{align*}
\|F(x_n^\d)-y^\d\| &\le \|y^\d-y-T e_n^\d\| +\|F(x_n^\d)-y-T e_n^\d\| \\
& \le \left(\frac{\tau+1}{2} +C \vep\right) \d +C \left(\beta \a_n^{\frac{p-1+\nu}{p}} \right)^{\frac{1}{p-1}}
+ C \vep \|T e_n^\d\|.
\end{align*}
In view of the estimate on $\|T e_n^\d\|$ in Lemma \ref{L3.7.1}, it follows for $0\le n\le \hat{n}_\d$ that
$$
\|F(x_n^\d)-y^\d\| \le \left(\frac{\tau+1}{2} +C \vep\right) \d
+C \left(\beta \a_n^{\frac{p-1+\nu}{p}} \right)^{\frac{1}{p-1}}.
$$
Recall that $n_\d\le \hat{n}_\d$ and $\a_n\le \a_{n-1}\le \theta \a_n$, we therefore obtain from (\ref{3.10.1}) that
$$
\tau \d \le \left(\frac{\tau+1}{2} +C \vep\right) \d +C \left(\beta \a_n^{\frac{p-1+\nu}{p}} \right)^{\frac{1}{p-1}},
\quad 0\le n<n_\d.
$$
Thus, if $\vep$ is sufficiently small, then we can derive that
$$
\d \le C \left(\beta \a_n^{\frac{p-1+\nu}{p}} \right)^{\frac{1}{p-1}}, \quad 0\le n<n_\d
$$
which gives the conclusion immediately. \hfill $\Box$
\end{proof}

Finally we prove Theorem \ref{T3.1.1} concerning the convergence rates of the method.

\begin{proof}{\it of Theorem \ref{T3.1.1}}.
We first consider the case $n_\d\ge 1$. Then for $1\le n\le n_\d$ we have from (\ref{3.7.2}) that
\begin{align*}
\a_{n-1}  D_{\xi^\dag} \Theta_F(x_n^\d, x^\dag)
& \le \frac{p}{p-1+\nu} \|y^\d-F(x_{n-1}^\d) -F'(x_{n-1}^\d) (x^\dag-x_{n-1}^\d)\|^p \\
&\quad \, + \a_{n-1} \left(\beta \|T e_n^\d\|^\nu\right)^{\frac{p}{p-1+\nu}}.
\end{align*}
Therefore, by using Assumption \ref{A1}, the estimate on $\|e_n^\d\|$ in Lemma \ref{L3.1.1},
the inequality $(a+b)^t\le 2^{t-1} (a^t+b^t)$ for $a, b\ge 0$ and $t\ge 1$,
we can obtain
\begin{align}\label{3.8.10}
D_{\xi^\dag} \Theta_F(x_n^\d, x^\dag) \le \frac{p2^{p-1}}{(p-1)\a_{n-1}} \left(\d^p + C \vep^p \|T e_{n-1}^\d\|^p\right)
+ \left(\beta \|T e_n^\d\|^\nu\right)^{\frac{p}{p-1+\nu}}.
\end{align}
Observing that Assumption \ref{A1} and the estimate on $\|e_n^\d\|$ in Lemma \ref{L3.1.1} imply
$$
\|T e_n^\d\| \le \|F(x_n^\d)-y\| +C \vep \|T e_n^\d\|.
$$
Thus, if $\vep$ is sufficiently small, then we have $\|T e_n^\d\| \le 2 \|F(x_n^\d)-y\|$. Since $n_\d$ is
determined by Rule \ref{Rule2} or Rule \ref{Rule3}, we have
$$
\|F(x_{n_\d}^\d)-y^\d\|+ \|F(x_{n_\d-1}^\d)-y^\d\|  \le 2\tau \d.
$$
We therefore obtain
$$
\|T e_{n_\d}^\d\|+ \|T e_{n_\d-1}^\d\|  \le 4(1+\tau) \d.
$$
Now we can take $n=n_\d$ in (\ref{3.8.10}) to obtain
$$
D_{\xi^\dag} \Theta_F(x_{n_\d}^\d, x^\dag) \le C\left( \frac{\d^p}{\a_{n_\d-1}} +\left(\beta \d^\nu\right)^{\frac{p}{p-1+\nu}}\right).
$$
An application of Lemma \ref{L8.3.5} then gives the desired rates of convergence.

For the case $n_\d=0$, we have $\|F(x_0)-y^\d\|\le \tau \d$ and thus $\|T e_0\|\le 2(1+\tau) \d$.
We may use (\ref{9.15.5}) to derive that
$$
D_{\xi^\dag}\Theta_F(x_0, x^\dag) \le \left(\beta \|T e_0\|^\nu\right)^{\frac{p}{p-1+\nu}}
\le C \left(\beta \d^\nu\right)^{\frac{p}{p-1+\nu}}.
$$
This completes the proof. \hfill $\Box$
\end{proof}

\begin{remark} The similar argument can be applied to derive the rate of convergence
under the general source condition
$$
\l \xi_0-\xi^\dag, x-x^\dag\r
\le \beta \left[D_{\xi^\dag} \Theta_F(x, x^\dag)\right]^{\frac{1}{p}} f\left(\frac{\|T(x-x^\dag)\|^p}{D_{\xi^\dag} \Theta_F(x, x^\dag)}\right)
$$
for some index function $f$ with suitable properties.
\end{remark}

\section{\bf Convergence}\label{Sect5}
\setcounter{equation}{0}

Although Theorem \ref{T3.1.1} gives the rates of convergence, it does not tell whether the method
is convergent when the source condition is not known to be satisfied.
In this section we will consider the situation that $\X$ is a reflexive Banach space, $\Y$ is a Hilbert space,
and $\Theta$ is a proper, lower semi-continuous,  $2$-convex function satisfying (\ref{pconv}) with $p=2$,
and derive the convergence result without assuming any source condition.
We will use $(\cdot, \cdot)$ to denote the inner product in $\Y$. In this situation, $x_{n+1}^\d$ is
the unique minimizer of the convex minimization problem
\begin{equation}\label{3.14.1}
\min_{x\in \X} \left\{\|y^\d-F(x_n^\d)-F'(x_n^\d) (x-x_n^\d)\|^2 +\a_n D_{\xi_0} \Theta_F(x,x_0)\right\},
\end{equation}
where $\Theta_F$ is the proper, lower semi-continuous, convex function on $\X$ defined by (\ref{theta_F}) satisfying
$$
\|z-x\|\le \gamma \left[D_\xi \Theta_F(z,x)\right]^{\frac{1}{2}},
\qquad \forall z\in \X, \, x\in D(\p \Theta_F) \mbox{ and } \xi\in \p \Theta_F(x).
$$
Let $n_\d$ be the integer determined by either Rule \ref{Rule1}, Rule \ref{Rule2} or Rule \ref{Rule3} with $\tau>1$. We will show that
$x_{n_\d}^\d\rightarrow x^\dag$ as $\d\rightarrow 0$ if
\begin{equation}\label{3.14.10}
\xi_0-\xi^\dag\in {\mathcal N}(T)^\perp,
\end{equation}
where ${\mathcal N}(T):=\{x\in \X: T x=0\}$ denotes the null space of $T$ and
$$
{\mathcal N}(T)^\perp:=\{\xi \in \X^*: \l \xi, x\r =0 \mbox{ for all } x\in {\mathcal N}(T)\}.
$$

We will derive the convergence result in two steps. In the first step, we consider the noise-free
iterative sequence $\{x_n\}$ defined by (\ref{3.14.1}) with $y^\d$ replaced by $y$, i.e. $x_{n+1}$
is the unique minimizer of the problem
\begin{equation}\label{3.14.2}
\min_{x\in \X} \left\{\|y-F(x_n)-F'(x_n) (x-x_n)\|^2 +\a_n D_{\xi_0} \Theta_F(x,x_0)\right\}.
\end{equation}
We will show that $x_n\rightarrow x^\dag$ as $n\rightarrow \infty$. In the second step, we will
consider the relation between $x_n^\d$ and $x_n$ and establish some crucial stability estimates.
The definition of $n_\d$ then enables us to derive the desired convergence result.

In order to achieve these two steps, we need the following simple result which plays a crucial role
in the arguments.

\begin{lemma} \label{L3.11.3}
Assume that $\X$ is a Banach space and $\Y$ is a Hilbert space. Let $A$ and $\hat{A}$ be two bounded
linear operators from $\X$ to $\Y$. For $\a>0$ let $x_\a$ be the minimizer of the problem
\begin{equation}\label{2.15.1}
\min_{x\in \X} \left\{\|y-A x\|^2 +\a D_{\xi_0} \Theta_F(x, x_0)\right\},
\end{equation}
and let $\hat{x}_\a$ be the minimizer of (\ref{2.15.1}) with $A$, $y$, $x_0$ and $\xi_0$ replaced by
$\hat{A}$, $\hat{y}$, $\hat{x}_0$ and $\hat{\xi}_0\in \p \Theta(\hat{x}_0)$ respectively. Then there holds
\begin{align*}
\|\hat{y} -y-\hat{A}(\hat{x}_\a-x_\a)\|^2 +\a D_{\xi_\a} \Theta_F(\hat{x}_\a, x_\a)
&\le \|\hat{y}-y\|^2 +\a \l \hat{\xi}_0-\xi_0, \hat{x}_\a-x_\a\r \\
& \quad \, + 2 ((\hat{A}-A) x_\a, \hat{A} (x_\a-\hat{x}_\a) )\\
& \quad \, + 2 (y-A x_\a, (A-\hat{A})(x_\a-\hat{x}_\a)),
\end{align*}
where $\xi_\a:=\xi_0 +\frac{2}{\a} A^* (y-A x_\a)\in \p \Theta_F(x_\a)$.
\end{lemma}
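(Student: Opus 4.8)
The statement compares two minimizers of a Tikhonov-type functional with linearized fidelity, and the natural tool is the optimality condition together with the three-point identity (\ref{4.3.1}) for Bregman distances. First I would record the first-order optimality conditions. Since $\Y$ is a Hilbert space, the functional in (\ref{2.15.1}) is differentiable in the fidelity part, so $x_\a$ satisfies
\[
2 A^*(A x_\a - y) + \a (\xi_\a - \xi_0) = 0, \qquad \xi_\a \in \p \Theta_F(x_\a),
\]
which is exactly the claim that $\xi_\a := \xi_0 + \frac{2}{\a} A^*(y - A x_\a) \in \p \Theta_F(x_\a)$; similarly $\hat\xi_\a := \hat\xi_0 + \frac{2}{\a}\hat A^*(\hat y - \hat A \hat x_\a) \in \p\Theta_F(\hat x_\a)$.

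Next I would exploit the minimizing property of $\hat x_\a$ directly, comparing its functional value against that at the competitor $x_\a$:
\[
\|\hat y - \hat A \hat x_\a\|^2 + \a D_{\hat\xi_0}\Theta_F(\hat x_\a, \hat x_0)
\le \|\hat y - \hat A x_\a\|^2 + \a D_{\hat\xi_0}\Theta_F(x_\a, \hat x_0).
\]
Using the identity (\ref{4.3.1}) with base point $\hat x_0$, direction $\hat\xi_0$, and the two points $\hat x_\a, x_\a$, together with $\xi_\a \in \p\Theta_F(x_\a)$, I can rewrite
\[
D_{\hat\xi_0}\Theta_F(\hat x_\a,\hat x_0) - D_{\hat\xi_0}\Theta_F(x_\a,\hat x_0)
= D_{\xi_\a}\Theta_F(\hat x_\a, x_\a) + \l \xi_\a - \hat\xi_0, \hat x_\a - x_\a\r.
\]
Substituting the optimality expression $\xi_\a - \xi_0 = \frac{2}{\a}A^*(y - A x_\a)$ turns the pairing $\l \xi_\a - \hat\xi_0, \hat x_\a - x_\a\r$ into $\l \xi_0 - \hat\xi_0, \hat x_\a - x_\a\r + \frac{2}{\a}(y - A x_\a, A(\hat x_\a - x_\a))$. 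Moving everything around, the $\a D_{\xi_\a}\Theta_F(\hat x_\a, x_\a)$ term lands on the left, the $\a \l \hat\xi_0 - \xi_0, \hat x_\a - x_\a\r$ term lands on the right, and I am left with a purely Hilbert-space algebraic identity to handle on the fidelity side.

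The remaining work is to massage $\|\hat y - \hat A x_\a\|^2 - \|\hat y - \hat A\hat x_\a\|^2 + 2(y - A x_\a, A(\hat x_\a - x_\a))$ into the quantity $\|\hat y - y - \hat A(\hat x_\a - x_\a)\|^2$ subtracted, plus the two cross terms $2((\hat A - A)x_\a, \hat A(x_\a - \hat x_\a))$ and $2(y - A x_\a, (A - \hat A)(x_\a - \hat x_\a))$, plus $\|\hat y - y\|^2$. This is the parallelogram-type bookkeeping: write $\hat y - \hat A \hat x_\a = (\hat y - y) + (y - A x_\a) + (A - \hat A)x_\a + \hat A(x_\a - \hat x_\a)$ (after adding and subtracting $y$, $A x_\a$, and $\hat A x_\a$), expand the square $\|\hat y - \hat A\hat x_\a\|^2$, and cancel. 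I expect the main obstacle to be precisely this term-tracking step in the Hilbert-space inner product — nothing deep, but one must choose the splitting of $\hat y - \hat A \hat x_\a$ carefully so that after expansion the unwanted squared norms $\|(A-\hat A)x_\a\|^2$ and $\|\hat A(x_\a - \hat x_\a)\|^2$ either cancel or are absorbed, and so that the leftover mixed terms combine exactly into the stated $2((\hat A - A)x_\a, \hat A(x_\a - \hat x_\a))$ and $2(y - A x_\a, (A - \hat A)(x_\a - \hat x_\a))$. Once the splitting is right, the inequality in the lemma follows immediately from the minimizing inequality above by rearrangement, and the non-negativity of $D_{\xi_\a}\Theta_F(\hat x_\a, x_\a)$ is not even needed here since it sits on the left with a favorable sign.
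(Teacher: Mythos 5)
Your proposal is correct and follows essentially the same route as the paper: the minimizing property of $\hat{x}_\a$ tested against $x_\a$, the three-point identity (\ref{4.3.1}) with $\xi_\a\in\p\Theta_F(x_\a)$, the optimality relation $\a(\xi_0-\xi_\a)+2A^*(y-Ax_\a)=0$, and Hilbert-space expansion of the fidelity terms. The bookkeeping step you flag as the main obstacle does close exactly with your splitting $\hat{y}-\hat{A}\hat{x}_\a=(\hat{y}-y)+(y-Ax_\a)+(A-\hat{A})x_\a+\hat{A}(x_\a-\hat{x}_\a)$ (the $\|\hat{A}(x_\a-\hat{x}_\a)\|^2$ terms cancel and no absorption is needed), which is just a reorganization of the paper's expansion of $\|\hat{y}-\hat{A}\hat{x}_\a\|^2$ about $y-\hat{A}x_\a$.
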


\begin{proof}
Since $x_\a$ is the minimizer of (\ref{2.15.1}), we immediately have $\xi_\a\in \p \Theta_F(x_\a)$.
By using the minimizing property of $\hat{x}_\a$, we have
$$
\|\hat{y}-\hat{A} \hat{x}_\a\|^2 +\a D_{\hat{\xi}_0} \Theta_F(\hat{x}_\a, \hat{x}_0)
\le \|\hat{y}-\hat{A} x_\a\|^2 +\a D_{\hat{\xi}_0} \Theta_F(x_\a, \hat{x}_0).
$$
Recall that
$$
D_{\hat{\xi}_0} \Theta_F(\hat{x}_\a, \hat{x}_0)-D_{\hat{\xi}_0} \Theta_F(x_\a, \hat{x}_0)
=D_{\xi_\a} \Theta_F(\hat{x}_\a, x_\a) +\l \xi_\a-\hat{\xi}_0, \hat{x}_\a-x_\a\r
$$
and
\begin{align*}
\|\hat{y}-\hat{A} \hat{x}_\a\|^2 &=\|y-\hat{A} x_\a\|^2 +2 (y-\hat{A} x_\a, \hat{y}-y -\hat{A}(\hat{x}_\a-x_\a))\\
&\quad \,  +\|\hat{y}-y -\hat{A} (\hat{x}_\a-x_\a)\|^2.
\end{align*}
Combining the above three equations we can derive that
\begin{align*}
 \|  \hat{y} -y  -\hat{A}(\hat{x}_\a-x_\a)\|^2 +\a D_{\xi_\a} \Theta_F(\hat{x}_\a, x_\a)
& \le \a \l \hat{\xi}_0 -\xi_\a, \hat{x}_\a-x_\a\r \\
&-2 (y-\hat{A} x_\a, \hat{y}-y -\hat{A} (\hat{x}_\a-x_\a) )\\
& +\|\hat{y}-\hat{A} x_\a\|^2 -\|y-\hat{A} x_\a\|^2.
\end{align*}
Since
$$
\|\hat{y}-\hat{A} x_\a\|^2 -\|y-\hat{A} x_\a\|^2=\|\hat{y}-y\|^2 +2(\hat{y}-y, y-\hat{A} x_\a),
$$
we can obtain
\begin{align*}
 \|  \hat{y} -y  -\hat{A}(\hat{x}_\a-x_\a)\|^2 +\a D_{\xi_\a} \Theta_F(\hat{x}_\a, x_\a)
 &\le  \a \l \hat{\xi}_0-\xi_\a, \hat{x}_\a-x_\a\r +\|\hat{y}-y\|^2 \\
 & + 2(y-\hat{A} x_\a, \hat{A} (\hat{x}_\a-x_\a)).
\end{align*}
In view of the fact $\a(\xi_0-\xi_\a)+2 A^*(y-A x_\a)=0$, by rearranging the terms we therefore
obtain the desired result. \hfill $\Box$
\end{proof}

\subsection{Convergence of the noise-free iterations}

In this subsection we will show for the noise-free iteration $\{x_n\}$ that $x_n\rightarrow x^\dag$ as
$n\rightarrow \infty$ if $\xi_0-\xi^\dag$ satisfies (\ref{3.14.10}). We first confirm this convergence
result under the stronger condition
$$
\xi_0-\xi^\dag =T^* \omega
$$
for some $\omega\in \Y^*$. This is included in the following result.

\begin{lemma}\label{L3.14.2}
Assume that $\X$ is a Banach space, $\Y$ is a Hilbert space, and $\Theta: \X\to (-\infty, \infty]$
is a proper, lower semi-continuous, $2$-convex function. Let $F$ satisfy Assumption \ref{A1} and let $\{\a_n\}$ satisfy (\ref{1.5}).
If $\xi_0-\xi^\dag=T^* \omega$ for some $\omega\in \Y^*$ and $(K_0+K_1) \|\xi_0-\xi^\dag\|$ is sufficiently small,
then for all $n$ there hold
\begin{align*}
\|x_n- x^\dag\| \le C \|\omega\| \a_n^{1/2} \quad \mbox{and} \quad  \|T(x_n-x^\dag)\| \le C \|\omega\| \a_n.
\end{align*}
\end{lemma}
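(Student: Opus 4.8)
The proof is by induction on $n$, mirroring the structure of Lemma~\ref{L3.1.1} but now exploiting the stronger source representation $\xi_0-\xi^\dag = T^*\omega$ to get the improved rates $\|x_n-x^\dag\|\le C\|\omega\|\a_n^{1/2}$ and $\|Te_n\|\le C\|\omega\|\a_n$. The case $n=0$ is handled directly: from $\|x_0-x^\dag\|\le\gamma^2\|\xi_0-\xi^\dag\|$ (which for $p=2$ comes from (\ref{3.29.1})) together with the scaling $\|T\|\le\a_0^{1/2}/\gamma$ one gets $\|Te_0\|\le\gamma\a_0^{1/2}\|T^*\omega\|$, and the bound $\|\xi_0-\xi^\dag\|=\|T^*\omega\|\le\|T\|\,\|\omega\|$ lets me absorb this into $C\|\omega\|\a_0$; similarly $\|x_0-x^\dag\|\le C\|\omega\|\a_0^{1/2}$.

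**Inductive step.** Assuming the estimates for $x_n$, I would start from the minimizing inequality (\ref{2.20.1}) with $y^\d$ replaced by $y$ (so $\d=0$), i.e.
\begin{align*}
\|y-F(x_n)-F'(x_n)(x_{n+1}-x_n)\|^2 + \a_n D_{\xi^\dag}\Theta_F(x_{n+1},x^\dag)
\le \|y-F(x_n)-F'(x_n)(x^\dag-x_n)\|^2 + \a_n\langle\xi_0-\xi^\dag, x_{n+1}-x^\dag\rangle.
\end{align*}
The crucial new ingredient is the treatment of the cross term: since $\xi_0-\xi^\dag=T^*\omega$,
\[
\langle \xi_0-\xi^\dag, x_{n+1}-x^\dag\rangle = (\omega, Te_{n+1}) \le \|\omega\|\,\|Te_{n+1}\|,
\]
which is linear in $\|Te_{n+1}\|$ rather than involving the full Bregman distance — this is what upgrades the rate from $\d^{\nu/(p-1+\nu)}$-type to the $\nu=1$ endpoint $\a_n$. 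Using Assumption~\ref{A1}(d), the consequences of (b) and (d) quoted after the assumption, and the induction hypotheses on $\|e_n\|$ and $\|Te_n\|$, I would bound $\|y-F(x_n)-F'(x_n)(x^\dag-x_n)\|\le\tfrac32(K_0+K_1)\|e_n\|\,\|Te_n\|\le C\vep\|\omega\|\a_n^{1/2}\cdot(\text{something})$, and combine (\ref{3.8.20})-type splitting with Young's inequality to extract a self-improving recursion of the form $\|Te_{n+1}\|\le C\vep\|Te_n\| + C\|\omega\|\a_n$, then use $\a_n\le\theta\a_{n+1}$ and smallness of $\vep$ to close the induction on $\|Te_{n+1}\|\le C\|\omega\|\a_{n+1}$. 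Feeding this back into the Bregman-distance inequality and the $2$-convexity bound $\|e_{n+1}\|\le\gamma[D_{\xi^\dag}\Theta_F(x_{n+1},x^\dag)]^{1/2}$ yields $\|e_{n+1}\|\le C\|\omega\|\a_{n+1}^{1/2}$.

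**Main obstacle.** The delicate point is the bootstrapping: $\|Te_{n+1}\|$ appears on both sides (once through the linear cross term $(\omega,Te_{n+1})$ after applying Young's inequality, once through the nonlinearity term $K_0\|e_n\|\,\|Te_{n+1}\|$), so I must be careful that the coefficient multiplying $\|Te_{n+1}\|$ on the right is strictly less than $1$ — this is exactly where smallness of $\vep=(K_0+K_1)\|\xi_0-\xi^\dag\|$ is used, and also where a Young's-inequality split of $\a_n^{1/2}\|\omega\|^{1/2}\|Te_{n+1}\|^{1/2}$ (coming from $\a_n(\omega,Te_{n+1})$) must be arranged so that the $\|Te_{n+1}\|$ part has small coefficient and the leftover is $O(\|\omega\|^2\a_n)$, i.e. of order $\|\omega\|\a_n$ in the square-root scale. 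Once the recursion has the right form, the induction with $\a_n\le\theta\a_{n+1}$ is routine, exactly as in Lemma~\ref{L3.1.1}. I expect no difficulty beyond keeping the constants consistent; the conceptual content is entirely in recognizing that $\xi_0-\xi^\dag\in\R(T^*)$ makes the source term linear in $Te$.
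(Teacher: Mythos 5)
Your proposal is correct and is essentially the paper's argument: observing that $\xi_0-\xi^\dag=T^*\omega$ turns the cross term into $\l \xi_0-\xi^\dag, x_{n+1}-x^\dag\r=(\omega, T(x_{n+1}-x^\dag))\le \|\omega\|\,\|T(x_{n+1}-x^\dag)\|$ is precisely the statement that the source condition (\ref{source}) holds with $\nu=1$ and $\beta=\|\omega\|$, and your induction reproduces, for $p=2$ and $\d=0$, the recursion established in Lemma \ref{L3.7.1} together with the feedback estimate (\ref{3.8.10}) and the $2$-convexity step, which is exactly how the paper deduces the lemma (by citing those results rather than redoing the induction). One small point to keep straight when writing it out: in the nonlinearity terms the factors $\|e_n\|$ and $\|e_{n+1}\|$ should be bounded by $C\|\xi_0-\xi^\dag\|$ via the noise-free analogue of Lemma \ref{L3.1.1} (so their coefficients are $O(\vep)$ and can be absorbed), not by the induction bound $C\|\omega\|\a_n^{1/2}$, since smallness is assumed only for $(K_0+K_1)\|\xi_0-\xi^\dag\|$ and not for $(K_0+K_1)\|\omega\|$.
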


\begin{proof}
Since $\xi_0-\xi^\dag=T^* \omega$, the source condition (\ref{source}) holds with $\nu=1$
and $\beta=\|\omega\|$. Thus we can apply Lemma \ref{L3.7.1} to obtain the estimate on $\|T(x_n-x^\dag)\|$ immediately.
In order to derive the estimate on $\|x_n-x^\dag\|$, we use (\ref{3.8.10}) which can be formulated as
$$
D_{\xi^\dag} \Theta_F(x_n, x^\dag) \le \frac{\|T (x_{n-1}-x^\dag)\|^2}{\a_{n-1}} +\|\omega\| \|T (x_n-x^\dag)\|.
$$
By using the estimates on $\|T (x_n-x^\dag)\|$, (\ref{1.5}) and the $2$-convexity of $\Theta_F$, we can obtain the
desired estimate. \hfill $\Box$
\end{proof}

In order to derive convergence under merely the condition (\ref{3.14.10}), we will use the following strategy.
We first find $\hat{x}_0\in D(\p \Theta_F):= D(F)\cap D(\p \Theta)$ and $\hat{\xi}_0\in \p \Theta_F(\hat{x}_0)$ such
that $\hat{\xi}_0$ is sufficiently close to $\xi_0$ and $\hat{\xi}_0-\xi^\dag\in \R(T^*)$, where $\R(T^*)$
denotes the range of $T^*$. We then use $\hat{x}_0$ and $\hat{\xi}_0$ as new initial data and define $\{\hat{x}_n\}$
by letting $\hat{x}_{n+1}$ be the unique minimizer of the problem
$$
\min_{x\in \X} \left\{\|y-F(\hat{x}_n)-F'(\hat{x}_n) (x-\hat{x}_n)\|^2 +\a_n D_{\hat{\xi}_0}\Theta_F(x, \hat{x}_0)\right\}.
$$
According to Lemma \ref{L3.14.2}, we have $\hat{x}_n \rightarrow x^\dag$ as $n\rightarrow \infty$.
In order to pass this convergence result to $\{x_n\}$, we need a perturbation result on
$\{x_n\}$ with respect to $\xi_0$.

\begin{lemma}\label{L3.15.1}
Assume that $\X$ is a Banach space, $\Y$ is a Hilbert space, and $\Theta: \X\to (-\infty, \infty]$
is a proper, lower semi-continuous, $2$-convex function. Let $F$ satisfy Assumption \ref{A1} and
let $\{\a_n\}$ satisfy (\ref{1.5}). If
$$
\vep:=(K_0+K_1) \max\{\|\xi_0-\xi^\dag\|, \|\hat{\xi}_0 - \xi^\dag\|\}
$$
is sufficiently small, then for all $n$ there hold
\begin{align}\label{3.29.2}
\|x_n- \hat{x}_n\|  \le 2 \gamma^2 \|\xi_0-\hat{\xi}_0\| \quad \mbox{and} \quad
\|T(x_n-\hat{x}_n)\| \le 2 \gamma \theta^{1/2} \a_n^{1/2} \|\xi_0-\hat{\xi}_0\|.
\end{align}

\end{lemma}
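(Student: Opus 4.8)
The plan is to prove the perturbation estimates \eqref{3.29.2} by induction on $n$, using Lemma \ref{L3.11.3} as the main engine. For the base case $n=0$, I would note that $x_0 = \nabla\Theta_F^*(\xi_0)$ and $\hat{x}_0 = \nabla\Theta_F^*(\hat{\xi}_0)$, so \eqref{3.29.1} with $p=2$ gives $\|x_0-\hat{x}_0\| \le \gamma^2 \|\xi_0-\hat{\xi}_0\|$, which is even better than the claimed bound, while $\|T(x_0-\hat{x}_0)\| \le \|T\| \|x_0-\hat{x}_0\| \le (\a_0^{1/2}/\gamma)\gamma^2\|\xi_0-\hat\xi_0\| = \gamma\a_0^{1/2}\|\xi_0-\hat{\xi}_0\|$, again within the bound. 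For the inductive step, assuming \eqref{3.29.2} holds for $n$, I would apply Lemma \ref{L3.11.3} with the identifications $A = F'(x_n)$, $\hat{A}=F'(\hat{x}_n)$, $y\rightsquigarrow y-F(x_n)+F'(x_n)x_n$ (and similarly for $\hat y$), $x_0,\hat x_0$ the initial guesses and $\xi_0,\hat\xi_0$ the initial subgradients, so that $x_\a = x_{n+1}$ and $\hat x_\a = \hat x_{n+1}$, giving an inequality controlling $\|F(\hat x_n)-F(x_n)-F'(\hat x_n)(\hat x_{n+1}-x_{n+1}) + (\text{linearization terms})\|^2 + \a_n D_{\xi_n}\Theta_F(\hat x_{n+1}, x_{n+1})$.

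The right-hand side of that inequality has four terms. The $\|\hat y - y\|^2$-type term becomes a quantity of size $\vep^2$ times $\|e_n\|\|Te_n\|$-products, controllable by Lemma \ref{L3.1.1}-type estimates (one needs the analogue of Lemma \ref{L3.1.1} for the noise-free iterations, which follows from the same proof with $\d=0$). The duality-pairing term $\a_n \langle \hat\xi_0 - \xi_0, \hat x_{n+1}-x_{n+1}\rangle$ I would split via Young's inequality against $D_{\xi_n}\Theta_F(\hat x_{n+1},x_{n+1})$ using the $2$-convexity estimate $\|\hat x_{n+1}-x_{n+1}\| \le \gamma [D_{\xi_n}\Theta_F(\hat x_{n+1},x_{n+1})]^{1/2}$, absorbing half the Bregman term to the left and leaving $\frac{1}{2}\a_n\gamma^2\|\xi_0-\hat\xi_0\|^2$ on the right. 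The two mixed terms involving $(\hat A - A)x_\a$ and $(A-\hat A)(x_\a - \hat x_\a)$ I would estimate using Assumption \ref{A1}(d): $\|(F'(\hat x_n)-F'(x_n))w\| \le K_0\|\hat x_n - x_n\|\|F'(x_n)w\| + K_1\|F'(x_n)(\hat x_n - x_n)\|\|w\|$, so these become $\vep$-small multiples of products of the quantities already controlled, including $\|T(x_n-\hat x_n)\|$ from the induction hypothesis and $\|T(x_{n+1}-\hat x_{n+1})\|$, the latter absorbed after a further Young splitting. After collecting terms, for $\vep$ sufficiently small I would conclude $\a_n D_{\xi_n}\Theta_F(\hat x_{n+1}, x_{n+1}) \le C\a_n\|\xi_0-\hat\xi_0\|^2$ with the constant chosen so that the $2$-convexity inequality yields $\|x_{n+1}-\hat x_{n+1}\| \le 2\gamma^2\|\xi_0-\hat\xi_0\|$.

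For the second estimate, I would return to the quantity $\|F(\hat x_n)-F(x_n)-F'(\hat x_n)(\hat x_{n+1}-x_{n+1}) + \cdots\|$ bounded by Lemma \ref{L3.11.3}, and relate it to $\|T(x_{n+1}-\hat x_{n+1})\|$ via a triangle inequality of the form $\|T(x_{n+1}-\hat x_{n+1})\| \le \|F(\hat x_n) - F(x_n) - F'(\hat x_n)(\hat x_{n+1}-x_{n+1})\| + \|F(x_n) + F'(x_n)(x^\dag - x_n) - y\| + \|F(\hat x_n)+F'(\hat x_n)(x^\dag - \hat x_n)-y\| + \|(T - F'(\hat x_n))(x_{n+1}-x^\dag)\| + \|(T-F'(x_n))(x^\dag - x_{n+1})\|$ — mirroring \eqref{3.8.20}. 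Each correction term is $O(\vep)$ times factors like $\|Te_n\|$, $\|Te_{n+1}\|$, $\|T(x_n-\hat x_n)\|$, $\|T(x_{n+1}-\hat x_{n+1})\|$; using the induction hypothesis on $\|T(x_n-\hat x_n)\| \le 2\gamma\theta^{1/2}\a_n^{1/2}\|\xi_0-\hat\xi_0\|$, the fact $\a_n \le \theta\a_{n+1}$, and the already-established $D_{\xi_n}\Theta_F$ bound which gives $\|F'(x_n)(\hat x_{n+1}-x_{n+1})\|$-type control through the square-root of the Bregman distance times $\a_n^{1/2}$, I would absorb the $\|T(x_{n+1}-\hat x_{n+1})\|$ terms on the left for small $\vep$ and arrive at $\|T(x_{n+1}-\hat x_{n+1})\| \le 2\gamma\theta^{1/2}\a_{n+1}^{1/2}\|\xi_0-\hat\xi_0\|$.

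The main obstacle I anticipate is bookkeeping the constants carefully enough that the induction closes with exactly the stated constants $2\gamma^2$ and $2\gamma\theta^{1/2}$ rather than constants that grow with $n$: this requires that every $\vep$-dependent term genuinely carries a factor that can be made small uniformly in $n$, which in turn relies on the uniform bounds $\|e_n\|, \|Te_n\|$ from the noise-free analogue of Lemma \ref{L3.1.1} and on the factor $\theta^{2/p}$-type slack in the geometric decay of $\a_n$. The subtle point is handling the term $\|F'(x_n)(\hat x_{n+1}-x_{n+1})\|$ (the ``$\hat A(x_\a - \hat x_\a)$'' factor appearing inside Lemma \ref{L3.11.3}'s right-hand side), since it is not directly one of the quantities we are inducting on; I expect to control it by $\|F'(x_n)(\hat x_{n+1}-x_{n+1})\| \le \|T(\hat x_{n+1}-x_{n+1})\| + \|(F'(x_n)-T)(\hat x_{n+1}-x_{n+1})\|$ and absorb, or alternatively to keep the full left-hand side of Lemma \ref{L3.11.3}'s inequality intact and exploit that the squared linearized-residual term there already dominates $\|F'(\hat x_n)(\hat x_{n+1}-x_{n+1})\|^2$ up to lower-order corrections.
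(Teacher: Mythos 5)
Your overall strategy (induction on $n$, base case via (\ref{3.29.1}) and the scaling condition, inductive step via Lemma \ref{L3.11.3} combined with Assumption \ref{A1}(d), with the noise-free analogue of Lemma \ref{L3.1.1} supplying the uniform bounds (\ref{3.29.3})) is the same as the paper's, but there is a genuine gap in how you instantiate Lemma \ref{L3.11.3}. You apply it in the unshifted variables, taking $x_\a=x_{n+1}$, $\hat x_\a=\hat x_{n+1}$ and $y\rightsquigarrow y-F(x_n)+F'(x_n)x_n$. Then the perturbation terms you must estimate are $(\hat A-A)x_\a=(F'(\hat x_n)-F'(x_n))x_{n+1}$ and $\hat y-y$, which contains the piece $(F'(\hat x_n)-F'(x_n))x_n$. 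Assumption \ref{A1}(d) bounds these only by expressions like $K_0\|\hat x_n-x_n\|\,\|F'(x_n)x_{n+1}\|+K_1\|F'(x_n)(\hat x_n-x_n)\|\,\|x_{n+1}\|$, and the factors $\|F'(x_n)x_{n+1}\|$ and $\|x_{n+1}\|$ (essentially $\|Tx^\dag\|$ and $\|x^\dag\|$ up to small corrections) are $O(1)$: they are neither $\vep$-small nor of order $\a_n^{1/2}$. So your claim that these terms are ``$\vep$-small multiples of quantities already controlled'' fails; after Young's inequality you are left with a contribution of order $\|\xi_0-\hat\xi_0\|^2$ on the right-hand side carrying no factor $\a_n$. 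Against the term $\a_n\gamma^{-2}\|x_{n+1}-\hat x_{n+1}\|^2$ on the left this only yields $\|x_{n+1}-\hat x_{n+1}\|\lesssim \a_n^{-1/2}\|\xi_0-\hat\xi_0\|$, which blows up as $\a_n\to 0$, and the induction with the fixed constants $2\gamma^2$ and $2\gamma\theta^{1/2}$ does not close.

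The missing idea is the paper's translation by $x^\dag$: set $\tilde\Theta(x):=\Theta_F(x+x^\dag)$ and observe that $e_{n+1}=x_{n+1}-x^\dag$ minimizes $\|g_n-F'(x_n)e\|^2+\a_n D_{\xi_0}\tilde\Theta(e,x_0-x^\dag)$ with $g_n=y-F(x_n)-F'(x_n)(x^\dag-x_n)$, and similarly for $\hat e_{n+1}$. Applying Lemma \ref{L3.11.3} to these shifted problems, every operator difference acts on an error-type vector: writing $T_n=F'(x_n)$ and $\hat T_n=F'(\hat x_n)$, one has $(\hat A-A)x_\a=(\hat T_n-T_n)e_{n+1}$ and $\hat g_n-g_n=\bigl(F(x_n)-F(\hat x_n)-\hat T_n(x_n-\hat x_n)\bigr)+(\hat T_n-T_n)e_n$. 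By Assumption \ref{A1}(d) together with (\ref{3.29.3}) and the induction hypothesis, each of these is bounded by $C\vep\,\a_n^{1/2}\|\xi_0-\hat\xi_0\|$, carrying both the $\vep$ factor and, crucially, the $\a_n^{1/2}$ weight; all perturbations are then dominated by the leading term $\a_n\|\xi_0-\hat\xi_0\|\,\|x_{n+1}-\hat x_{n+1}\|$ and one arrives at $\|T(x_{n+1}-\hat x_{n+1})\|^2+\a_n\gamma^{-2}\|x_{n+1}-\hat x_{n+1}\|^2\le 4\gamma^2\a_n\|\xi_0-\hat\xi_0\|^2$, which closes the induction for both estimates at once (the $T$-bound follows using $\a_n\le\theta\a_{n+1}$, so your separate triangle-inequality detour for the second estimate is unnecessary). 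Your base case and your treatment of the duality-pairing term are fine; the unshifted instantiation of Lemma \ref{L3.11.3} is the step that breaks.
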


\begin{proof}
Using the same argument in the proof of Lemma \ref{L3.1.1}, it follows that if $\vep$ is sufficiently small
then $x_n$ and $\hat{x}_n$ are well-defined for all $n$ and there hold the estimates
\begin{align}\label{3.29.3}
\|e_n\| +\frac{\|T e_n\|}{\sqrt{\a_n}} &\le C \|\xi_0-\xi^\dag\|,\quad
\|\hat{e}_n\| +\frac{\|T \hat{e}_n\|}{\sqrt{\a_n}} \le C \|\hat{\xi}_0-\xi^\dag\|,
\end{align}
where
$$
e_n:=x_n-x^\dag \qquad \mbox{and} \qquad \hat{e}_n:=\hat{x}_n-x^\dag.
$$

In the following we will prove (\ref{3.29.2})
by induction. Since $x_0=\nabla \Theta_F^*(\xi_0)$ and $\hat{x}_0= \nabla \Theta_F^*(\hat{\xi}_0)$, we have
from (\ref{3.29.1}) and the scaling condition $\|T\|\le \a_0^{1/2}/\gamma$ that (\ref{3.29.2}) holds
for $n=0$. Now we assume that (\ref{3.29.2}) holds for some $n$ and show that it also holds
true for $n+1$.

Let $\tilde{\Theta}(x):=\Theta_F(x+x^\dag)$. Then $\xi_0\in \p \tilde{\Theta}(x_0-x^\dag)$ and
$\tilde{\Theta}$ is still a 2-convex function. By using the definition of $x_{n+1}$, it is easy to see
that $e_{n+1}:=x_{n+1}-x^\dag$ is the minimizer of the minimization problem
\begin{equation}\label{2.15.200}
\min_{e\in \X} \left\{ \|g_n-F'(x_n) e\|^2 +\a_n D_{\xi_0} \tilde{\Theta}(e, x_0-x^\dag)\right\},
\end{equation}
where
\begin{equation}\label{2.15.11}
g_n:=y-F(x_n)-F'(x_n) (x^\dag-x_n).
\end{equation}
Similarly, $\hat{e}_{n+1}:=\hat{x}_{n+1}-x^\dag$ is the unique minimizer of the minimization problem
\begin{equation}\label{2.15.2}
\min_{e\in \X} \left\{ \|\hat{g}_n-F'(\hat{x}_n) e\|^2 +\a_n D_{\hat{\xi}_0} \tilde{\Theta}(e, \hat{x}_0-x^\dag)\right\},
\end{equation}
where
$$
\hat{g}_n:=y-F(\hat{x}_n)-F'(\hat{x}_n) (x^\dag-\hat{x}_n).
$$
Let $T_n:=F'(x_n)$ and $\hat{T}_n:=F'(\hat{x}_n)$. It then follows from Lemma \ref{L3.11.3}
and the $2$-convexity of $\tilde{\Theta}$ that
\begin{align*}
&\| \hat{g}_n-g_n  -\hat{T}_n (\hat{x}_{n+1} - x_{n+1})\|^2 +\a_n \left(\gamma^{-1}\|\hat{x}_{n+1}- x_{n+1}\|\right)^2 \nonumber \\
& \le \|\hat{g}_n-g_n\|^2 +\a_n \l \hat{\xi}_0-\xi_0, \hat{x}_{n+1}-x_{n+1}\r
+2 ((\hat{T}_n-T_n) e_{n+1}, \hat{T}_n (x_{n+1}-\hat{x}_{n+1})) \nonumber\\
& +2 (g_n-T_n e_{n+1}, (T_n-\hat{T}_n) (x_{n+1}-\hat{x}_{n+1})).
\end{align*}
In view of the identity $\|a+b\|^2 =\|a\|^2 +2 (a, b) +\|b\|^2$ in Hilbert spaces, we can write
\begin{align*}
\| \hat{g}_n & -g_n  -\hat{T}_n (\hat{x}_{n+1} - x_{n+1})\|^2 \\
& = \|\hat{g}_n-g_n\|^2 -2 (\hat{g}_n-g_n, \hat{T}_n (\hat{x}_{n+1}-x_{n+1}))
+\|T(\hat{x}_{n+1}-x_{n+1})\|^2 \\
& + 2 (T(\hat{x}_{n+1}-x_{n+1}), (\hat{T}_n-T) (\hat{x}_{n+1}-x_{n+1}))\\
& + \|(\hat{T}_n-T) (\hat{x}_{n+1}-x_{n+1})\|^2.
\end{align*}
Therefore we can obtain
\begin{align}\label{3.10.2}
\|T(\hat{x}_{n+1} -x_{n+1})\|^2 +\a_n \left(\gamma^{-1} \|\hat{x}_{n+1}-x_{n+1}\|\right)^2
& \le \a_n \l \hat{\xi}_0-\xi_0, \hat{x}_{n+1}-x_{n+1}\r \nonumber\\
& \quad \,  + I_1 +I_2 +I_3 +I_4,
\end{align}
where
\begin{align*}
I_1 &= 2 ((\hat{T}_n-T_n) e_{n+1}, \hat{T}_n (x_{n+1}-\hat{x}_{n+1})),\\
I_2 &= 2 (g_n-T_n e_{n+1}, (T_n-\hat{T}_n) (x_{n+1}-\hat{x}_{n+1})), \\
I_3 &= 2 (\hat{g}_n-g_n, \hat{T}_n (\hat{x}_{n+1}-x_{n+1})), \\
I_4 &=  2 (T(x_{n+1}-\hat{x}_{n+1}), (\hat{T}_n-T) (\hat{x}_{n+1}-x_{n+1})).
\end{align*}
In the following we will estimate $I_j$ for $j=1, \cdots, 4$. With the help of Assumption \ref{A1},
(\ref{1.5}), (\ref{3.29.3}) and the induction hypotheses, we can derive that
\begin{align*}
& \|T_n e_{n+1}\| +\|g_n-T_n e_{n+1}\| \le C \a_n^{1/2} \|\xi_0-\xi^\dag\|,\\
&\|\hat{T}_n (x_n-\hat{x}_n)\|  \le C \a_n^{1/2} \|\xi_0-\hat{\xi}_0\|, \quad
\|(\hat{T}_n-T_n) e_{n+1} \| \le C \vep\a_n^{1/2} \|\xi_0-\hat{\xi}_0\|,\\
& \|(\hat{T}_n -T) (x_{n+1}-\hat{x}_{n+1})\| \le C\vep \|T(x_{n+1}-\hat{x}_{n+1})\| + C \vep \a_n^{1/2} \|x_{n+1}-\hat{x}_{n+1}\|,\\
& \|\hat{T}_n (x_{n+1}-\hat{x}_{n+1})\| \le (1+C\vep) \|T(x_{n+1}-\hat{x}_{n+1})\| + C \vep \a_n^{1/2} \|x_{n+1}-\hat{x}_{n+1}\|
\end{align*}
and
\begin{align*}
\|&(T_n-\hat{T}_n) (x_{n+1}-\hat{x}_{n+1})\| \\
& \le C(K_0+K_1) \|\xi_0-\hat{\xi}_0\| \left(\|T(x_{n+1}-\hat{x}_{n+1})\|
 +\a_n^{1/2} \|x_{n+1}-\hat{x}_{n+1}\|\right).
\end{align*}
Moreover, by writing
$$
\hat{g}_n-g_n=\left(F(x_n)-F(\hat{x}_n)-\hat{T}_n (x_n-\hat{x}_n)\right) +(\hat{T}_n-T_n) e_n,
$$
we can use Assumption \ref{A1}, (\ref{3.29.3}), and the induction hypotheses to derive that
$$
\|\hat{g}_n-g_n\| \le C\vep \a_n^{1/2} \|\xi_0-\hat{\xi}_0\|.
$$
By making use of the above estimates we therefore obtain
\begin{align*}
|I_1|+|I_2|+|I_3| \le C\vep \|\xi_0-\hat{\xi}_0\|
\left( \a_n^{1/2} \|T(x_{n+1}-\hat{x}_{n+1})\| +\a_n \|x_{n+1}-\hat{x}_{n+1}\|\right)
\end{align*}
and
$$
|I_4|\le C\vep \|T(x_{n+1}-\hat{x}_{n+1})\|^2 +C\vep \a_n \|x_{n+1}-\hat{x}_{n+1}\|^2.
$$
Combining these estimates on $I_j$, $j=1,\cdots, 4$ with (\ref{3.10.2}) gives
\begin{align*}
\|T(x_{n+1} & -\hat{x}_{n+1})\|^2 +\a_n \left(\gamma^{-1} \|x_{n+1} - \hat{x}_{n+1}\|\right)^2 \\
&\le C\vep \|\xi_0-\hat{\xi}_0\| \left( \a_n^{1/2} \|T(x_{n+1}-\hat{x}_{n+1})\| +\a_n \|x_{n+1}-\hat{x}_{n+1}\|\right)\\
& +C\vep \|T(x_{n+1}-\hat{x}_{n+1})\|^2 +C\vep \a_n \|x_{n+1}-\hat{x}_{n+1}\|^2\\
& + \a_n \|\xi_0-\hat{\xi}_0\| \|x_{n+1}-\hat{x}_{n+1}\|.
\end{align*}
Therefore, if $\vep$ is sufficiently small, we can obtain immediately that
\begin{align*}
\|T(x_{n+1} & -\hat{x}_{n+1})\|^2 +\a_n \left(\gamma^{-1} \|x_{n+1} - \hat{x}_{n+1}\|\right)^2
\le 4 \gamma^2 \a_n \|\xi_0-\hat{\xi}_0\|^2.
\end{align*}
In view of the condition $\a_n\le \theta \a_{n+1}$, we therefore obtain the desired estimates. \hfill $\Box$
\end{proof}

Now we are ready to prove the convergence of the noise-free iteration $\{x_n\}$.

\begin{theorem}\label{P3.12.1}
Let $\X$ be a reflexive Banach space and $\Y$ be a Hilbert space, let $\Theta$ be a proper, lower semi-continuous,
$2$-convex function on $\X$. Let $F$ satisfy Assumption \ref{A1} and let $\{\a_n\}$ satisfy (\ref{1.5}).
If $\xi_0-\xi^\dag\in \N(T)^\perp$ and $(K_0+K_1) \|\xi_0-\xi^\dag\|$ is sufficiently small, then  there hold
$$
\lim_{n\rightarrow \infty} \|x_n-x^\dag\| =0 \quad \mbox{and} \quad
\lim_{n\rightarrow \infty} \frac{\|T(x_n-x^\dag)\|}{\sqrt{\a_n}}=0.
$$
\end{theorem}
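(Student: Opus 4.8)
The plan is to prove this by an approximation argument: reduce to the case already handled in Lemma \ref{L3.14.2} and transfer the conclusion back to $\{x_n\}$ through the perturbation estimate of Lemma \ref{L3.15.1}. First I would observe that, since $\X$ is reflexive and $\Y$ is a Hilbert space, the norm closure of the subspace $\R(T^*)\subset\X^*$ coincides with its weak${}^*$ closure (weak and norm closures of a subspace agree by Mazur's theorem in a reflexive space), and the latter is exactly the annihilator $\N(T)^\perp$. Hence, given any $\eta>0$, the hypothesis $\xi_0-\xi^\dag\in\N(T)^\perp$ lets me pick $\omega\in\Y$ with $\|(\xi_0-\xi^\dag)-T^*\omega\|<\eta$. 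I would then set $\hat\xi_0:=\xi^\dag+T^*\omega$ and $\hat x_0:=\nabla\Theta_F^*(\hat\xi_0)$, so that by the properties of $\Theta_F^*$ recalled in Section \ref{Sect2} together with the reflexivity of $\X$ we get $\hat x_0\in D(\p\Theta_F)$ and $\hat\xi_0\in\p\Theta_F(\hat x_0)$, i.e. $(\hat x_0,\hat\xi_0)$ is an admissible pair of initial data. Since $\|\hat\xi_0-\xi^\dag\|=\|T^*\omega\|\le\|\xi_0-\xi^\dag\|+\eta$, if I fix $\eta\le\|\xi_0-\xi^\dag\|$ from the start, then both $(K_0+K_1)\|\hat\xi_0-\xi^\dag\|$ and $(K_0+K_1)\max\{\|\xi_0-\xi^\dag\|,\|\hat\xi_0-\xi^\dag\|\}$ are bounded by $2(K_0+K_1)\|\xi_0-\xi^\dag\|$, hence remain as small as required to invoke Lemmas \ref{L3.14.2} and \ref{L3.15.1}.

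Next I would run the auxiliary noise-free iteration $\{\hat x_n\}$ with initial data $(\hat x_0,\hat\xi_0)$, the one introduced just before Lemma \ref{L3.15.1}. Because $\hat\xi_0-\xi^\dag=T^*\omega$, Lemma \ref{L3.14.2} applies directly and gives $\|\hat x_n-x^\dag\|\le C\|\omega\|\a_n^{1/2}$ and $\|T(\hat x_n-x^\dag)\|\le C\|\omega\|\a_n$ for all $n$, hence also $\|T(\hat x_n-x^\dag)\|/\sqrt{\a_n}\le C\|\omega\|\a_n^{1/2}$. At the same time Lemma \ref{L3.15.1} controls the gap between the two iterations:
\[
\|x_n-\hat x_n\|\le 2\gamma^2\|\xi_0-\hat\xi_0\|<2\gamma^2\eta,\qquad \frac{\|T(x_n-\hat x_n)\|}{\sqrt{\a_n}}\le 2\gamma\theta^{1/2}\|\xi_0-\hat\xi_0\|<2\gamma\theta^{1/2}\eta .
\]
Adding these two groups of estimates via the triangle inequality I would obtain
\[
\|x_n-x^\dag\|\le 2\gamma^2\eta+C\|\omega\|\a_n^{1/2},\qquad \frac{\|T(x_n-x^\dag)\|}{\sqrt{\a_n}}\le 2\gamma\theta^{1/2}\eta+C\|\omega\|\a_n^{1/2},
\]
and since $\a_n\to0$ by (\ref{1.5}), passing to the limit yields $\limsup_{n\to\infty}\|x_n-x^\dag\|\le 2\gamma^2\eta$ and $\limsup_{n\to\infty}\|T(x_n-x^\dag)\|/\sqrt{\a_n}\le 2\gamma\theta^{1/2}\eta$. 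As $\eta>0$ is arbitrary, both $\limsup$'s vanish, which is exactly the assertion.

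I expect the main obstacle to be the ``soft'' first step and the order of the quantifiers rather than any heavy computation. Establishing the density of $\R(T^*)$ in $\N(T)^\perp$ is precisely where reflexivity of $\X$ is needed, and one must be careful that the approximant $\hat\xi_0$ still meets the smallness hypotheses of Lemmas \ref{L3.14.2} and \ref{L3.15.1} (this is the role of the bound $\|\hat\xi_0-\xi^\dag\|\le\|\xi_0-\xi^\dag\|+\eta$). Moreover $\|\omega\|$, and with it $C\|\omega\|$, blows up as $\eta\to0$, so one has to freeze $\eta$ first, exploit $\a_n\to0$ to kill the $C\|\omega\|\a_n^{1/2}$ terms for that fixed $\eta$, and only afterwards let $\eta\to0$. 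The genuinely analytic work — the estimates of the cross terms $I_1,\dots,I_4$ — has already been carried out inside Lemma \ref{L3.15.1}, so what remains beyond the above is essentially this bookkeeping.
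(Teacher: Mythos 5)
Your proposal is correct and follows essentially the same route as the paper: approximate $\xi_0-\xi^\dag\in \N(T)^\perp=\overline{\R(T^*)}$ (using reflexivity) by $\hat\xi_0-\xi^\dag=T^*\omega$, run the auxiliary noise-free iteration from $\hat x_0=\nabla\Theta_F^*(\hat\xi_0)$, combine Lemma \ref{L3.14.2} with the perturbation bound of Lemma \ref{L3.15.1}, and let the approximation parameter tend to zero only after $n\to\infty$. The only cosmetic differences are that you retain the explicit rates $C\|\omega\|\a_n^{1/2}$ and argue via a $\limsup$, whereas the paper fixes $\epsilon$ and a threshold index $n_0$; the quantifier handling and the verification that $\hat\xi_0$ still satisfies the smallness hypotheses match the paper's argument.
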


\begin{proof}
Let $\Theta_F^*$ denote the Fenchel conjugate of $\Theta_F$. It is known that $D(\Theta_F^*)=\X^*$,
$\Theta_F^*$ is Fr\'{e}chet differentiable and its gradient $\nabla \Theta_F^*: \X^*\to \X$ satisfies
$$
\|\nabla \Theta_F^*(\xi)-\nabla \Theta_F^*(\eta) \|\le \gamma^2\|\xi-\eta\|, \quad \forall \xi, \eta\in \X^*.
$$

Let $0<\epsilon<\|\xi_0-\xi^\dag\|$ be sufficiently small. Since $\X$ is reflexive, we have
$\N(T)^\perp =\overline{\R(T^*)}$. Therefore $\xi_0-\xi^\dag\in \overline{\R(T^*)}$. Consequently,
we can choose $\hat{\xi}_0\in \X^*$ such that $\|\xi_0-\hat{\xi}_0\|\le\epsilon$ and
$\hat{\xi}_0-\xi^\dag\in \R(T^*)$. We now define $\hat{x}_0:=\nabla \Theta_F^*(\hat{\xi}_0)$. Then we have
$\hat{x}_0\in D(\p \Theta_F)$ and $\hat{\xi}_0 \in \p \Theta_F(\hat{x}_0)$. Moreover
$$
\|\hat{x}_0-x_0\| =\|\nabla \Theta_F^*(\hat{\xi}_0)-\nabla \Theta_F^*(\xi_0)\| \le \gamma^2 \|\hat{\xi}_0-\xi_0\|
\le \gamma^2 \epsilon.
$$
Since $x_0\in B_\rho(x^\dag)$, by taking $\epsilon>0$ to be small enough, we can guarantee that
$\hat{x}_0\in B_\rho(x^\dag) \cap D(\p \Theta_F)$. We then use this $\hat{x}_0$ as
an initial guess to define $\{\hat{x}_n\}$ as above. Since the smallness of $(K_0+K_1)\|\xi_0-\xi^\dag\|$
implies the smallness of $(K_0+K_1) \|\hat{\xi}_0 -\xi^\dag\|$, we may use Lemma \ref{L3.15.1} to conclude
that there is a constant $C_*$ independent of $n$ such that
$$
\|x_n-\hat{x}_n\| +\frac{\|T(x_n-\hat{x}_n)\|}{\sqrt{\a_n}} \le C_* \|\xi_0-\hat{\xi}_0\|\le C_* \epsilon,
\quad \forall n.
$$
On the other hand, since $\hat{\xi}_0-\xi^\dag\in \R(T^*)$, it follows from Lemma \ref{L3.14.2} and (\ref{1.5})
that there exists an integer $n_0$ such that
$$
\|\hat{x}_n-x^\dag\| +\frac{\|T(\hat{x}_n-x^\dag)\|}{\sqrt{\a_n}} \le \epsilon, \quad \forall n\ge n_0.
$$
Consequently
$$
\|x_n-x^\dag\| +\frac{\|T(x_n-x^\dag)\|}{\sqrt{\a_n}} \le (1+C_*)\epsilon, \quad \forall n\ge n_0.
$$
Since $\epsilon>0$ can be arbitrarily small, we therefore obtain the convergence result. \hfill $\Box$
\end{proof}

\subsection{Main convergence result}

Although we have shown in the previous subsection the convergence of the noise-free iteration $\{x_n\}$
as $n\rightarrow \infty$, our ultimate aim is to show that $x_{n_\d}^\d\rightarrow x^\dag$ as $\d\rightarrow 0$
with the integer $n_\d$ defined by either Rule \ref{Rule1}, \ref{Rule2}, or \ref{Rule3} with $\tau>1$.
We still need some stability estimates contained in the following result.

\begin{lemma}\label{L3.15.10}
Assume that all the conditions with $p=2$ in Lemma \ref{L3.1.1} hold, and assume also that
$\Y$ is a Hilbert space.  If $\vep:=(K_0+K_1) \|\xi_0-\xi^\dag\|$
is sufficiently small, then for all $0\le n\le \hat{n}_\d$ there hold
$$
\|x_n^\d-x_n\| \le 3 (1+\gamma)^2 \frac{\d}{\sqrt{\a_n}} \quad \mbox{and}
\quad \|F(x_n^\d)-F(x_n)-y^\d+y\|\le (1+C\vep ) \d,
$$
where $\hat{n}_\d$ is the integer defined by (\ref{nhat}).
\end{lemma}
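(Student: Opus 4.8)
The plan is to argue by induction on $n$, carrying along the single invariant
$$X_n:=\big(\|T(x_n^\d-x_n)\|^2+\a_n\gamma^{-2}\|x_n^\d-x_n\|^2\big)^{1/2}\le 3\d,$$
which already implies $\|T(x_n^\d-x_n)\|\le3\d$ and $\|x_n^\d-x_n\|\le 3\gamma\,\d/\sqrt{\a_n}\le 3(1+\gamma)^2\,\d/\sqrt{\a_n}$. Since both iterations start from the same pair $(x_0,\xi_0)$ we have $x_0^\d=x_0$, so $X_0=0$ and $\|F(x_0^\d)-F(x_0)-y^\d+y\|=\|y^\d-y\|\le\d$; the case $n=0$ holds. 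Throughout I may use the bounds $\|x_n^\d-x^\dag\|\le C\|\xi_0-\xi^\dag\|$, $\|T(x_n^\d-x^\dag)\|\le C\|\xi_0-\xi^\dag\|\a_n^{1/2}$ (and the analogous ones for $x_n$, cf. (\ref{3.29.3})) coming from Lemma~\ref{L3.1.1}, together with the noise-free residual bound $\|y-F(x_n)-F'(x_n)(x_{n+1}-x_n)\|\le C\|\xi_0-\xi^\dag\|\a_n^{1/2}$.

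For the inductive step, following the proof of Lemma~\ref{L3.15.1} I would rewrite the problems defining $x_{n+1}^\d$ and $x_{n+1}$ in the shifted variable $e=x-x^\dag$ with $\tilde\Theta(x):=\Theta_F(x+x^\dag)$: then $e_{n+1}^\d:=x_{n+1}^\d-x^\dag$ minimizes $\|g_n^\d-F'(x_n^\d)e\|^2+\a_n D_{\xi_0}\tilde\Theta(e,x_0-x^\dag)$ with $g_n^\d:=y^\d-F(x_n^\d)-F'(x_n^\d)(x^\dag-x_n^\d)$, and similarly for $e_{n+1}:=x_{n+1}-x^\dag$ with $g_n:=y-F(x_n)-F'(x_n)(x^\dag-x_n)$. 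Applying Lemma~\ref{L3.11.3} with $A=F'(x_n)$, $\hat A=F'(x_n^\d)$, data $g_n,g_n^\d$, and the \emph{common} initial pair $(x_0,\xi_0)$ (so the term $\langle\hat\xi_0-\xi_0,\cdot\rangle$ drops), using $2$-convexity of $\tilde\Theta$ on the left-hand side, and then expanding $\|g_n^\d-g_n-F'(x_n^\d)(e_{n+1}^\d-e_{n+1})\|^2$ via $\|a+b\|^2=\|a\|^2+2(a,b)+\|b\|^2$ (splitting $F'(x_n^\d)=T+(F'(x_n^\d)-T)$, exactly as in the proof of Lemma~\ref{L3.15.1}) and cancelling $\|g_n^\d-g_n\|^2$, I arrive at
$$\|T(x_{n+1}^\d-x_{n+1})\|^2+\a_n\gamma^{-2}\|x_{n+1}^\d-x_{n+1}\|^2\le 2\|g_n^\d-g_n\|\,\|T(x_{n+1}^\d-x_{n+1})\|+R,$$
where each contribution to $R$ carries a factor $F'(x_n^\d)-T$ or $F'(x_n^\d)-F'(x_n)$. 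By Assumption~\ref{A1}(d), the a priori bounds, and the level-$n$ invariant $X_n\le3\d$, these factors are of size $C\vep$ relative to $\|T(\cdot)\|+\a_n^{1/2}\|\cdot\|$, and — the crucial point — in every product the power of $\a_n$ cancels, so $R\le C\vep\big(\|T(x_{n+1}^\d-x_{n+1})\|^2+\a_n\gamma^{-2}\|x_{n+1}^\d-x_{n+1}\|^2\big)+C\vep\d^2$.

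It remains to see that $\|g_n^\d-g_n\|\le(1+C\vep)\d$, which follows from the identity $g_n^\d-g_n=(y^\d-y)-\big(F(x_n^\d)-F(x_n)-F'(x_n)(x_n^\d-x_n)\big)+\big(F'(x_n^\d)-F'(x_n)\big)e_n^\d$: the second term is $\le\frac{1}{2}(K_0+K_1)\|x_n^\d-x_n\|\,\|F'(x_n)(x_n^\d-x_n)\|\le C\vep\d$ (using $\|x_n^\d-x_n\|\le C\|\xi_0-\xi^\dag\|$ and $\|F'(x_n)(x_n^\d-x_n)\|\le(1+C\vep)\|T(x_n^\d-x_n)\|+C\vep\d\le C\d$ from the level-$n$ invariant), and the third is $\le C\vep\d$ by Assumption~\ref{A1}(d) with the same $\a_n$-cancellation. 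Substituting into the displayed inequality and setting $X:=\big(\|T(x_{n+1}^\d-x_{n+1})\|^2+\a_n\gamma^{-2}\|x_{n+1}^\d-x_{n+1}\|^2\big)^{1/2}$, one gets the scalar inequality $(1-C\vep)X^2\le 2(1+C\vep)\d\,X+C\vep\d^2$, which for $\vep$ small forces $X\le3\d$. Hence $\|T(x_{n+1}^\d-x_{n+1})\|\le3\d$, and, since $\a_n\ge\a_{n+1}$ by (\ref{1.5}), $\|x_{n+1}^\d-x_{n+1}\|\le 3\gamma\,\d/\sqrt{\a_n}\le 3(1+\gamma)^2\,\d/\sqrt{\a_{n+1}}$: the invariant $X_{n+1}\le3\d$ holds and the first estimate is established.

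For the second estimate I would start from the algebraic identity (valid after the same substitutions)
$$F(x_{n+1}^\d)-F(x_{n+1})-(y^\d-y)=-\big(g_n^\d-g_n-F'(x_n^\d)(e_{n+1}^\d-e_{n+1})\big)+\varphi_{n+1}+\big(F'(x_n^\d)-F'(x_n)\big)e_{n+1},$$
where $\varphi_{n+1}:=\phi(x_{n+1}^\d,x_n^\d)-\phi(x_{n+1},x_n)$ with $\phi(u,v):=F(u)-F(v)-F'(v)(u-v)$. The first bracket is at most the square root of the right-hand side in Lemma~\ref{L3.11.3}, which is $\|g_n^\d-g_n\|^2$ plus terms already bounded by $C\vep(X^2+\d^2)\le C\vep\d^2$ (using the first estimate just obtained), hence $\le(1+C\vep)\d$; the last bracket is $\le C\vep\d$ by Assumption~\ref{A1}(d) and the usual $\a_n$-cancellation. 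The term $\varphi_{n+1}$ — a second-order difference of the nonlinearity — is the main obstacle. The plan is to telescope it as $\varphi_{n+1}=[\phi(x_{n+1}^\d,x_n^\d)-\phi(x_{n+1},x_n^\d)]+[\phi(x_{n+1},x_n^\d)-\phi(x_{n+1},x_n)]$, rewrite each difference in the form $\phi(\cdot,\cdot)+(F'(\cdot)-F'(\cdot))(\cdot)$, and bound every resulting term by Assumption~\ref{A1}(d): here one again pairs a factor of size $C\d/\sqrt{\a_n}$ (from $\|x_{n+1}^\d-x_{n+1}\|$ via the first estimate, or from $\|x_n^\d-x_n\|$) with a factor of size $C\|\xi_0-\xi^\dag\|\a_n^{1/2}$, so that the $\a_n$ cancels and each term is $\le C\vep\d$, whence $\|\varphi_{n+1}\|\le C\vep\d$. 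Combining the three contributions gives $\|F(x_{n+1}^\d)-F(x_{n+1})-y^\d+y\|\le(1+C\vep)\d$, completing the induction. The delicate point throughout is precisely this accounting of powers of $\a_n$: each stability estimate must be arranged so that the $1/\sqrt{\a_n}$ coming from $\|x^\d-x\|$ is always absorbed by an $\a_n^{1/2}$ coming from $\|T(\cdot-x^\dag)\|$ or from a linearization residual, leaving a clean $C\vep\d$.
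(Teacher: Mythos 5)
Your proposal is correct and follows essentially the same route as the paper's proof: induction on $n$, applying Lemma \ref{L3.11.3} to the two shifted minimization problems (in the variable $e=x-x^\dag$ with $\tilde\Theta$) with the common initial data $(\xi_0,x_0-x^\dag)$ so the $\l\hat\xi_0-\xi_0,\cdot\r$ term drops, expanding the square, and using Assumption \ref{A1}(d) together with the bound $\d\le \mu\|\xi_0-\xi^\dag\|\,\a_n^{1/2}$ for $n<\hat{n}_\d$ so that every factor $\d/\sqrt{\a_n}$ is absorbed into $C\vep$, which yields the scalar inequality forcing the invariant for small $\vep$. The only minor difference is in extracting the second estimate: the paper first derives $\|y^\d-y-T(x_n^\d-x_n)\|\le(1+C\vep)\d$ from (\ref{3.31.2}), (\ref{9.17.1}), (\ref{9.17.2}) and then applies a three-term triangle inequality, whereas you use an exact identity with the telescoped second-order difference $\varphi_{n+1}$; both rest on the same ingredients and give the same bound.
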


\begin{proof}
We first prove by induction that
\begin{equation}\label{3.31.1}
\|x_n^\d-x_n\| \le 3(1+\gamma)^2 \frac{\d}{\sqrt{\a_n}} \,\,\, \mbox{ and } \,\,\,
\|T(x_n-x_n^\d)\| \le 3(1+\gamma) \d, \quad 0\le n\le \hat{n}_\d.
\end{equation}
Since $x_0^\d=x_0$, the estimates are trivial for $n=0$. We now assume that the estimates
are true for some $n<\hat{n}_\d$ and show that they are also true for $n+1$. We will use the similar argument
in the proof of Lemma \ref{L3.15.1}. By the definition of $x_{n+1}^\d$, it is easy to see that
$e_{n+1}^\d:=x_{n+1}^\d-x^\dag$ is the unique minimizer of the problem
$$
\min_{e\in \X} \left\{\|g_n^\d-F'(x_n^\d) e\|^2 +\a_n D_{\xi_0} \tilde{\Theta}(e, x_0-x^\dag)\right\},
$$
where
$$
g_n^\d:=y^\d-F(x_n^\d) -F'(x_n^\d) (x^\dag-x_n^\d).
$$
Recall that $e_{n+1}:=x_{n+1}-x^\dag$ is the unique minimizer of the problem (\ref{2.15.200}) with
$g_n$ given by (\ref{2.15.11}). In view of Lemma \ref{L3.11.3} and the $2$-convexity of $\tilde{\Theta}$, we can obtain
\begin{align}\label{3.11.10}
\|g_n^\d  & - g_n -F'(x_n^\d) (x_{n+1}^\d-x_{n+1})\|^2 +\a_n \left(\gamma^{-1} \|x_{n+1}^\d- x_{n+1}\|\right)^2 \nonumber\\
& \quad \le \|g_n^\d-g_n\|^2 + 2\left((F'(x_n^\d)-F'(x_n)) e_{n+1}, F'(x_n^\d) (x_{n+1}-x_{n+1}^\d)\right) \nonumber\\
& \quad + 2 \left(g_n-F'(x_n) e_{n+1}, (F'(x_n)-F'(x_n^\d)) (x_{n+1}-x_{n+1}^\d)\right).
\end{align}
We can write
\begin{align*}
\|g_n^\d & -g_n -F'(x_n^\d) (x_{n+1}^\d-x_{n+1})\|^2  \\
& =\|g_n^\d-g_n\|^2 -2 \left(g_n^\d-g_n, F'(x_n^\d) (x_{n+1}^\d-x_{n+1})\right) +\|T(x_{n+1}^\d-x_{n+1})\|^2\\
& + 2 \left(T (x_{n+1}^\d-x_{n+1}), (F'(x_n^\d)-T) (x_{n+1}^\d-x_{n+1}) \right) \\
& + \|(F'(x_n^\d)-T) (x_{n+1}^\d-x_{n+1})\|^2.
\end{align*}
Therefore, it follows from (\ref{3.11.10}) that
\begin{align}\label{4.13.1}
\|T (x_{n+1}^\d-x_{n+1})\|^2 +\a_n \left(\gamma^{-1} \|x_{n+1}^\d- x_{n+1}\|\right)^2
\le J_1 +J_2 +J_3 +J_4,
\end{align}
where
\begin{align*}
J_1 & = 2\left((F'(x_n^\d)-F'(x_n)) e_{n+1}, F'(x_n^\d) (x_{n+1}-x_{n+1}^\d)\right),\\
J_2 & = 2 \left(g_n-F'(x_n) e_{n+1}, (F'(x_n)-F'(x_n^\d)) (x_{n+1}-x_{n+1}^\d)\right),\\
J_3 & = 2 \left(g_n^\d-g_n, F'(x_n^\d) (x_{n+1}^\d-x_{n+1})\right), \\
J_4 & = 2 \left(T (x_{n+1}-x_{n+1}^\d), (F'(x_n^\d)-T) (x_{n+1}^\d-x_{n+1}) \right).
\end{align*}
In the following we will estimate $J_j$ for $j=1, \cdots, 4$. With the help of Assumption \ref{A1},
(\ref{1.5}), (\ref{3.29.3}), the estimates in Lemma \ref{L3.1.1} and the induction hypotheses,
we can derive that
\begin{align*}
& \|F'(x_n) e_n^\d\| +\|F'(x_n) e_{n+1}\| +\|g_n-F'(x_n) e_{n+1}\| \le C \a_n^{1/2} \|\xi_0-\xi^\dag\|,\\
&\|F'(x_n) (x_n^\d-x_n) \| \le C \d, \qquad
\|(F'(x_n^\d) - F'(x_n) ) e_{n+1} \| \le C \vep \d,\\
& \|(F'(x_n^\d) -T) (x_{n+1}^\d -x_{n+1})\| \le C\vep \|T(x_{n+1}^\d- x_{n+1})\| + C \vep \a_n^{1/2} \|x_{n+1}^\d-x_{n+1}\|,\\
& \|F'(x_n) (x_{n+1}^\d-x_{n+1})\| \le (1+C\vep) \|T(x_{n+1}^\d-x_{n+1})\| + C \vep \a_n^{1/2} \|x_{n+1}^\d-x_{n+1}\|,\\
& \|F'(x_n^\d) (x_{n+1}^\d-x_{n+1})\| \le (1+C\vep) \|T(x_{n+1}^\d-x_{n+1})\| + C \vep \a_n^{1/2} \|x_{n+1}^\d-x_{n+1}\|
\end{align*}
and
\begin{align*}
\|(F'(x_n)& -F'(x_n^\d)) (x_{n+1}-x_{n+1}^\d)\| \\
& \le C(K_0+K_1) \d \left(\a_n^{-1/2} \|T(x_{n+1}^\d-x_{n+1})\|
 +\|x_{n+1}^\d-x_{n+1}\|\right).
\end{align*}
In order to estimate $\|g_n^\d-g_n\|$, we use the expressions of $g_n^\d$ and $g_n$ to write
\begin{align*}
g_n^\d-g_n &= y^\d-y -\left[F(x_n^\d)-F(x_n)-F'(x_n) (x_n^\d-x_n)\right] + \left[F'(x_n^\d)-F'(x_n)\right] e_n^\d.
\end{align*}
By using Assumption \ref{A1}, the estimates in Lemma \ref{L3.1.1}, (\ref{3.29.3}) and the induction hypotheses,
we can derive that
\begin{align}\label{3.31.2}
\|g_n^\d-g_n-y^\d+y\| & \le C\vep \d.
\end{align}
Therefore
\begin{equation}\label{3.31.3}
\|g_n^\d-g_n\| \le (1+C\vep ) \d.
\end{equation}
By making use of the above estimates we therefore obtain
\begin{align*}
& |J_1|+|J_2| \le C\vep \d \left( \|T(x_{n+1}^\d-x_{n+1})\| +\a_n^{1/2} \|x_{n+1}^\d-x_{n+1}\|\right) \\
& |J_3| \le (2+C\vep) \d \left( \|T(x_{n+1}^\d-x_{n+1})\| +\a_n^{1/2} \|x_{n+1}^\d-x_{n+1}\|\right)\\
& |J_4| \le C\vep \left(\|T(x_{n+1}^\d-x_{n+1})\|^2 + \a_n \|x_{n+1}^\d-x_{n+1}\|^2\right).
\end{align*}
Combining the above estimates on $J_j$ for $j=1, \cdots, 4$ we therefore obtain from (\ref{4.13.1}) that
\begin{align*}
\|T(x_{n+1}^\d&-x_{n+1})\|^2 +\a_n \left(\gamma^{-1} \|x_{n+1}^\d- x_{n+1}\|\right)^2 \\
& \le (2+C\vep) \d \left( \|T(x_{n+1}^\d-x_{n+1})\| +\a_n^{1/2} \|x_{n+1}^\d-x_{n+1}\|\right)\\
& + C\vep \left(\|T(x_{n+1}^\d-x_{n+1})\|^2 + \a_n \|x_{n+1}^\d-x_{n+1}\|^2\right).
\end{align*}
Thus, if $\vep$ is sufficiently small, we have
\begin{align*}
\|T(x_{n+1}^\d &-x_{n+1})\|^2 +\a_n \left(\gamma^{-1} \|x_{n+1}^\d- x_{n+1}\|\right)^2  \le 9(1+\gamma)^2 \d^2.
\end{align*}
This together with $\a_{n+1}\le \a_n$ completes the proof of (\ref{3.31.1}).

By using the estimate (\ref{3.31.1}) we have $|J_1|+|J_2|\le C\vep \d^2$. Thus, we may use (\ref{3.11.10})
and (\ref{3.31.3}) to obtain
\begin{equation}\label{9.17.1}
\|g_n^\d-g_n-F'(x_n^\d) (x_{n+1}^\d-x_{n+1})\|\le (1+C\vep ) \d.
\end{equation}
Observing that Assumption \ref{A1}, Lemma \ref{L3.1.1}, and (\ref{3.31.1}) imply
\begin{equation}\label{9.17.2}
\|(T-F'(x_n^\d)) (x_{n+1}^\d-x_{n+1})\| \le C \vep \d.
\end{equation}
We may use (\ref{3.31.2}), (\ref{9.17.1}) and (\ref{9.17.2}) to obtain
$$
\|y^\d-y -T(x_n^\d-x_n)\|\le (1+C\vep) \d, \qquad 0\le n\le \hat{n}_\d
$$
since it is trivial for $n=0$ because $x_0^\d=x_0$.

Finally, we can use Assumption \ref{A1}, Lemma \ref{L3.1.1}, (\ref{3.29.3}) and (\ref{3.31.1}) to derive that
\begin{align*}
\|F(x_n^\d)-F(x_n)-y^\d+y\| &\le \|F(x_n^\d)-F(x_n)-F'(x_n) (x_n^\d-x_n)\| \\
& \quad \, + \|[F'(x_n)-T](x_n^\d-x_n)\|\\
&\quad \, +\|y^\d-y-T(x_n^\d-x_n)\|\\
&\le (1+C\vep ) \d.
\end{align*}
The proof is therefore complete. \hfill $\Box$
\end{proof}

Now we are ready to prove the main convergence result.

\begin{theorem}\label{T5.2}
Let $\X$ be a reflexive Banach space and $\Y$ be a Hilbert space, and let
$\Theta: \X \to (-\infty, \infty]$ be a proper, lower semi-continuous, $2$-convex function.
Let $F$ satisfy Assumption \ref{A1} and let $\{\a_n\}$ satisfy (\ref{1.5}). Assume that $x^\dag$ is
the unique solution of (\ref{1.1}) in $B_\rho(x^\dag)\cap D(F)$. If $\xi_0-\xi^\dag\in \N(T)^\perp$ and
$\vep:=(K_0+K_1) \|\xi_0-\xi^\dag\|$ is sufficiently small, then for the method (\ref{IRGN}) terminated
by either Rule \ref{Rule1}, \ref{Rule2}, or \ref{Rule3} with $\tau>1$ there holds $x_{n_\d}^\d \rightarrow
x^\dag$ as $\d\rightarrow 0$.
\end{theorem}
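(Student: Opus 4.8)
The plan is to argue by contradiction using a subsequence argument, splitting according to whether the stopping index stays bounded. Suppose the conclusion fails; then there are $\epsilon>0$ and a sequence $\d_k\to 0$ with $\|y^{\d_k}-y\|\le\d_k$ such that $\|x_{n_{\d_k}}^{\d_k}-x^\dag\|\ge\epsilon$ for all $k$. Write $n_k:=n_{\d_k}$. Passing to a subsequence, we may assume that either (a) $n_k\equiv n$ for a fixed integer $n$, or (b) $n_k\to\infty$. Throughout I use the noise-free iteration $\{x_n\}$ from (\ref{3.14.2}) and the split $\|x_{n_k}^{\d_k}-x^\dag\|\le\|x_{n_k}^{\d_k}-x_{n_k}\|+\|x_{n_k}-x^\dag\|$, together with the fact that, by Lemma \ref{L3.1.1}, $n_k\le\hat{n}_{\d_k}$ so that the stability estimates of Lemma \ref{L3.15.10} apply to all iterates up to $n_k$.

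In case (a) the index $n$ is fixed, so Lemma \ref{L3.15.10} gives $\|x_n^{\d_k}-x_n\|\le 3(1+\gamma)^2\d_k/\sqrt{\a_n}\to 0$, hence $x_n^{\d_k}\to x_n$ strongly; moreover $x_n\in B_\rho(x^\dag)\cap D(F)$ by the noise-free version of Lemma \ref{L3.1.1}. Each of the three rules forces $\|F(x_n^{\d_k})-y^{\d_k}\|\le 2\tau\d_k$, so $F(x_n^{\d_k})\to y$; since Assumption \ref{A1} (via its consequences on $\|F(z)-F(x)\|$) makes $F$ continuous on $B_\rho(x^\dag)\cap D(F)$, we get $F(x_n)=y$, and the assumed uniqueness of $x^\dag$ in $B_\rho(x^\dag)\cap D(F)$ yields $x_n=x^\dag$. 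Thus $x_{n_k}^{\d_k}=x_n^{\d_k}\to x^\dag$, contradicting $\|x_{n_k}^{\d_k}-x^\dag\|\ge\epsilon$.

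In case (b), the second term satisfies $\|x_{n_k}-x^\dag\|\to 0$ by Theorem \ref{P3.12.1} (this is where $\xi_0-\xi^\dag\in\N(T)^\perp$ is used), and by Lemma \ref{L3.15.10} the first term is bounded by $3(1+\gamma)^2\d_k/\sqrt{\a_{n_k}}$, so it suffices to show $\d_k/\sqrt{\a_{n_k}}\to 0$. For $n_k$ large the defining property of $n_\d$ fails at an index $m_k$ with $n_k-2\le m_k\le n_k-1$ ($m_k=n_k-1$ for Rule \ref{Rule1}; one of $n_k-1,n_k-2$ for Rules \ref{Rule2} and \ref{Rule3}), i.e. $\|F(x_{m_k}^{\d_k})-y^{\d_k}\|>\tau\d_k$; note $m_k\to\infty$. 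Using $\|F(x_{m_k}^{\d_k})-F(x_{m_k})-y^{\d_k}+y\|\le(1+C\vep)\d_k$ from Lemma \ref{L3.15.10} gives $(\tau-1-C\vep)\d_k<\|F(x_{m_k})-y\|$, and for $\vep$ small (recall $\tau>1$) this yields $\d_k\le C\|F(x_{m_k})-y\|\le C\|T(x_{m_k}-x^\dag)\|$, the last bound coming from the consequence of Assumption \ref{A1} and the uniform bound on $\|x_{m_k}-x^\dag\|$ in Lemma \ref{L3.1.1}. Since $\a_{m_k}\le\theta^2\a_{n_k}$ by (\ref{1.5}), we obtain
$$\frac{\d_k}{\sqrt{\a_{n_k}}}\le C\theta\,\frac{\|T(x_{m_k}-x^\dag)\|}{\sqrt{\a_{m_k}}}\longrightarrow 0$$
as $k\to\infty$, because $m_k\to\infty$ and $\|T(x_n-x^\dag)\|/\sqrt{\a_n}\to 0$ by Theorem \ref{P3.12.1}. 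Hence $\|x_{n_k}^{\d_k}-x^\dag\|\to 0$, again a contradiction, and the theorem follows.

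The main obstacle is the estimate $\d_k/\sqrt{\a_{n_k}}\to 0$ in case (b): this is precisely where the a posteriori nature of the stopping index must be reconciled with the noise level, and it relies essentially on the \emph{rate} $\|T(x_n-x^\dag)\|/\sqrt{\a_n}\to 0$ for the noise-free iteration (Theorem \ref{P3.12.1}), not merely on $x_n\to x^\dag$. The case split and the continuity/uniqueness argument in case (a) are routine once the stability estimates of Lemmas \ref{L3.1.1} and \ref{L3.15.10} are available.
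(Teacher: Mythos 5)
Your proposal is correct and follows essentially the same route as the paper's proof: the same split into bounded versus divergent stopping indices, the same use of the stability estimates of Lemma \ref{L3.15.10} together with uniqueness in the finite case, and the same key step of extracting a failing index among $n_k-1,n_k-2$ to bound $\d_k$ by $\|T(x_{m_k}-x^\dag)\|$ and conclude $\d_k/\sqrt{\a_{n_k}}\to 0$ via Theorem \ref{P3.12.1}. The contradiction/subsequence framing is only a cosmetic difference from the paper's direct two-case argument.
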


\begin{proof}

We complete the proof by considering two cases. Assume first that there is a sequence
$\{y^{\d_k}\}$ satisfying $\|y^{\d_k}-y\|\le \d_k$ with $\d_k\rightarrow 0$ such that
$n_k:=n_{\d_k}$ converges to a finite integer $n$ as $k\rightarrow \infty$. We may assume that
$n_k=n$ for all $k$. By Lemma \ref{L3.15.10} we have $x_{n_k}^{\d_k}=x_n^{\d_k}\rightarrow x_n$
as $k\rightarrow \infty$. Since the definition of $n_k$ implies
$$
\|F(x_{n_k}^{\d_k})-y^{\d_k}\|\le 2 \tau \d_k,
$$
by taking $k\rightarrow \infty$ we can obtain $F(x_n)=y$. Since $x^\dag$ is the unique solution of
(\ref{1.1}) in $B_\rho(x^\dag)$, we have $x_n=x^\dag$ and hence $x_{n_k}^{\d_k}\rightarrow x^\dag$
as $k\rightarrow \infty$.

Assume next that there is a sequence $\{y^{\d_k}\}$ satisfying $\|y^{\d_k}-y\|\le \d_k$ with $\d_k\rightarrow 0$
such that $n_k:=n_{\d_k}\rightarrow \infty$ as $k\rightarrow \infty$. By the first estimate in Lemma
\ref{L3.15.10} we have
$$
\|x_{n_k}^{\d_k}-x^\dag\| \le 3(1+\gamma)^2 \frac{\d_k}{\sqrt{\a_{n_k}}} +\|x_{n_k}-x^\dag\|.
$$
By using the definition of $n_k$ and the second estimate in Lemma \ref{L3.15.10} we can obtain
\begin{align*}
\tau \d_k &\le \max\left\{\|F(x_{n_k-1}^{\d_k})-y^{\d_k}\|, \|F(x_{n_k-2}^{\d_k})-y^{\d_k}\|\right\}\\
&\le (1+C\vep ) \d_k + \max\left\{ \|F(x_{n_k-1})-y\|, \|F(x_{n_k-2})-y\|\right\}.
\end{align*}
By using Assumption \ref{A1} and (\ref{3.29.3}) we can show that
$\|F(x_n)-y\| \le 2\|T(x_n-x^\dag)\|$ for all $n$ if $\vep$ is sufficiently small, and consequently
$$
\d_k \le \frac{8}{\tau-1} \max\left\{\|T e_{n_k-1}\|, \|T e_{n_k-2}\|\right\}.
$$
Since $n_k\rightarrow \infty$, it follows from Theorem \ref{P3.12.1} and (\ref{1.5}) that $\d_k/\sqrt{\a_{n_k}}
\rightarrow 0$ as $k\rightarrow \infty$. Moreover, Theorem \ref{P3.12.1} also implies that
$\|x_{n_k}-x^\dag\| \rightarrow 0$ as $k\rightarrow \infty$. We therefore obtain again $\|x_{n_k}^{\d_k}-x^\dag\|
\rightarrow 0$ as $k\rightarrow \infty$. \hfill $\Box$
\end{proof}

\section{Applications to parameter identification problems}\label{Sect6}

In this section we consider some examples on parameter identification in partial differential equations
to illustrate that Assumption \ref{A1}(d) can be verified for a wide range of applications. We also report
some numerical experiments to test the efficiency of our method.

\begin{example}\label{e6.1}
We first consider the identification of the parameter $c$ in the boundary value
problem
\begin{equation}\label{8.27.1}
\left\{\begin{array}{ll} -\triangle u+c u=f \qquad& \mbox{in } \Omega\\
u=g \qquad& \mbox{on } \partial \Omega
\end{array}\right.
\end{equation}
from an $L^2(\Omega)$-measurement of the state $u$, where $\Omega\subset
{\mathbb R}^N, N\le 3,$ is a bounded domain with Lipschitz boundary
$\partial \Omega$, $f\in L^2(\Omega)$ and $g\in H^{3/2}(\partial
\Omega)$. We assume $c^\dag\in L^2(\Omega)$ is the sought solution.
This problem reduces to solving an equation of the form
(\ref{1.1}) if we define the nonlinear operator $F$ to be the
parameter-to-solution mapping
$$
F: L^2(\Omega)\to L^2(\Omega), \qquad F(c):=u(c)
$$
with $u(c)\in H^2(\Omega)\subset L^2(\Omega)$ being
the unique solution of (\ref{8.27.1}). Such $F$ is well-defined on
$$
D(F):=\left\{c\in L^2(\Omega): \|c-\hat{c}\|_{L^2}\le \gamma_0
\mbox{ for some } \hat{c}\ge 0 \mbox{ a.e.}\right\}
$$
for some positive constant $\gamma_0>0$. It is well known that $F$
has Fr\'{e}chet derivative
\begin{equation}\label{8.27.2}
F'(c) h=-A(c)^{-1}(hF(c)), \qquad h\in L^2(\Omega),
\end{equation}
where $A(c): V:=H^2\cap H_0^1\to L^2$ is defined
by $A(c)u:=-\triangle u+c u$ which is an isomorphism uniformly in a ball
$B_\rho(c^\dag)\subset D(F)$ around $c^\dag$. Let $V'$ be the dual space of $V:=H^2\cap H_0^1$ with
respect to the bilinear form
\begin{equation}\label{bilinear}
\langle \varphi, \psi\rangle =\int_\Omega \varphi(x) \psi(x) dx.
\end{equation}
Then $A(c)$ extends to an isomorphism from $L^2(\Omega)$ to $V'$. Since (\ref{8.27.2}) implies for any $c,
d\in B_\rho(c^\dag)$ and $h\in L^2(\Omega)$
$$
\left(F'(c)-F'(d)\right) h=-A(c)^{-1}\left((c-d)F'(d) h\right)
-A(c)^{-1}\left(h(F(c)-F(d))\right),
$$
and since $L^1(\Omega)$ embeds into $V'$ due to the restriction $N\le 3$, we have
\begin{align}\label{8.27.3}
\|(F'(c)& -F'(d))h\|_{L^2} \nonumber\\
&\le \|A(c)^{-1}\left((c-d)F'(d)
h\right)\|_{L^2}+\|A(c)^{-1}\left(h(F(c)-F(d))\right)\|_{L^2}\nonumber\\
&\le C \|(c-d)F'(d)h\|_{V'}+C\|h(F(c)-F(d))\|_{V'} \nonumber\\
&\le C \|(c-d)F'(d)h\|_{L^1}+C\|h(F(c)-F(d))\|_{L^1}\nonumber\\
&\le C \|c-d\|_{L^2}\|F'(d)h\|_{L^2}+ C\|F(c)-F(d)\|_{L^2}
\|h\|_{L^2}.
\end{align}
On the other hand, observing that
$$
F(c)-F(d)=-A(d)^{-1}\left((c-d)F(c)\right),
$$
by using (\ref{8.27.2}) we have
$$
F(c)-F(d)-F'(d)(c-d)=-A(d)^{-1}\left((c-d)\left(F(c)-F(d)\right)\right).
$$
Thus, by a similar argument as above,
$$
\|F(c)-F(d)-F'(d)(c-d)\|_{L^2} \le C
\|c-d\|_{L^2}\|F(c)-F(d)\|_{L^2}.
$$
Therefore, if $\rho>0$ is small enough, we have
$\|F(c)-F(d)\|_{L^2}\le 2 \|F'(d)(c-d)\|_{L^2}$ for
$c, d\in B_\rho(c^\dag)$, which together
with (\ref{8.27.3}) verifies Assumption \ref{A1}(d).

In order to reconstruct $c^\dag$, we choose a $2$-convex function $\Theta$, an initial guess
$c_0\in D(F)\cap D(\p \Theta)$ and $\xi_0\in \p \Theta(c_0)$. By adopting Remark \ref{remark3.1},
we then define $c_{n+1}^\d$ as the minimizer of the convex optimization problem
\begin{equation}\label{min6.2}
\min_{c\in D(F)}\left\{\int_\Omega |u^\d-F(c_n^\d)-F'(c_n^\d) (c-c_n^\d)|^2 dx +\a_n D_{\xi_0}\Theta(c,c_0)\right\}.
\end{equation}
Let $n_\d$ be the integer determined by either Rule \ref{Rule1}, \ref{Rule2}, or \ref{Rule3} with $\tau>1$.
Then, by Theorem \ref{T5.2}, we have $\|c_{n_\d}^\d-c^\dag\|_{L^2(\Omega)} \rightarrow 0$ as $\d\rightarrow 0$.

In the following we present two numerical experiments for this example to test our method. In these computation,
we always choose $\Theta$ to be nonnegative with $\Theta(0)=0$ so that we can take $c_0=0$ and $\xi_0=0$ and consequently
$D_{\xi_0} \Theta(c, c_0)=\Theta(c)$.

\begin{figure}[ht!]
     \begin{center}
%
%        \subfigure[]
%        {%            \label{fig:first}
            \includegraphics[width=1\textwidth, height=2.6in]{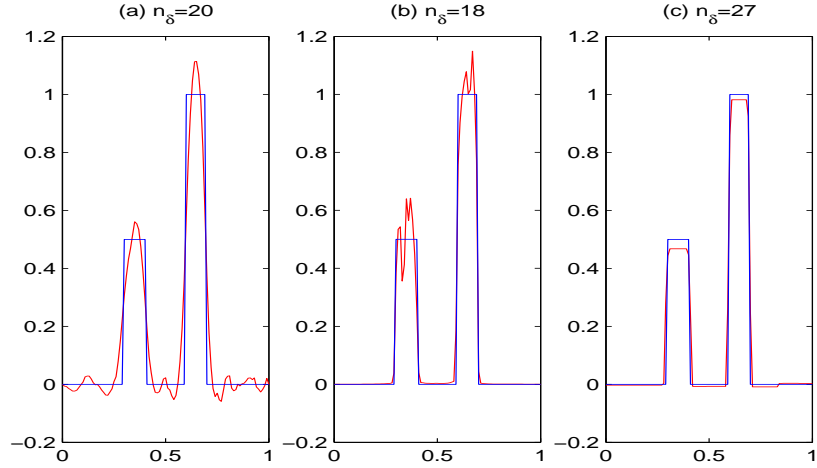}
%        }%
%        \subfigure[]
%        {%
%         %  \label{fig:second}
%           \includegraphics[width=0.32\textwidth, height= 2in]{L1(1d).eps}
%        }%\\ %  ------- End of the first row ----------------------%
%        \subfigure[]
        %{%
        %    \label{fig:third}
        %    \includegraphics[width=0.42\textwidth, height=2.8in]{new.eps}
        %}%
        %\subfigure[Caption of Fourth Figure]
%        {%            \label{fig:fourth}
%            \includegraphics[width=0.32\textwidth, height=2in]{BV(1d-100).eps}
%        }%
%
    \end{center}
    \caption{Numerical results for the one-dimensional problem in Example \ref{e6.1} with
    different choices of $\Theta$: (a) $\Theta(c)=\|c\|_{L^2}^2$; (b) $\Theta(c)=\lambda \|c\|_{L^2}^2 +\|c\|_{L^1}$
    with $\lambda=0.01$; (c) $\Theta(c)=\lambda \|c\|_{L^2}^2 +\int_{[0,1]} |D c|$ with $\lambda=0.01$.%
             }%
    \label{fig6.2}
\end{figure}

In the first numerical experiment we consider the one-dimensional problem over the interval $\Omega=(0,1)$ with the sought
solution given by
$$
c^\dag(t)=\left\{\begin{array}{lll}
0.5, & \quad  \mbox{if } 0.3\le t\le 0.4,\\
1.0, & \quad \mbox{if } 0.6\le t\le 0.7,\\
0, & \quad \mbox{elsewhere}.
\end{array}\right.
$$
We assume that the inhomogeneous term is $f(t)=(1+5t) c^\dag(t)$ and the boundary data are
$u(0)=1$ and $u(1)=6$. Then $u(c^\dag)=1+5t$. In our computation, instead of $u(c^\dag)$ we use random noise
data $u^\d$ satisfying $\|u^\d-u(c^\dag)\|_{L^2[0,1]}=\d$ with noisy level $\d>0$; we take $\d=0.1\times 10^{-3}$
and $\a_n=2^{-n}$. The differential equations involved are solved approximately by a finite difference method by dividing $[0,1]$
into $100$ subintervals of equal length with the resulting tridiagonal system solved by the Thomas algorithm.
The convex optimization problems (\ref{min6.2}) is solved by a restart conjugate gradient method (\cite{SY2006}). The iteration
is terminated by Rule \ref{Rule1}, i.e. the discrepancy principle, with $\tau=1.05$. In Figure \ref{fig6.2} we
report the computational results with different choices of $\Theta$. In (a) we report the result with $\Theta(c)=\|c\|_{L^2}^2$
for which the corresponding method becomes the iteratively regularized Gauss--Newton method in Hilbert spaces.
Although the reconstruction tells something on the sought solution, it does not tell more information such
as sparsity, discontinuities and constancy since the result is too oscillatory. In (b) we report the result corresponding to
$\Theta(c)=\lambda \|c\|_{L^2}^2 +\|c\|_{L^1}$ with $\lambda=0.01$.  Since $\|c\|_{L^1}$ is non-smooth, we replace it by
$\int_0^1 \sqrt{|c|^2+\varepsilon}$ with $\varepsilon=10^{-6}$ in our computation. It is clear that the sparsity
of the sought solution is significantly reconstructed. The reconstruction result, however, is still oscillatory
on the nonzero parts which is typical for this choice of $\Theta$. In (c) we report the result corresponding to $\Theta(c)=
\lambda \|c\|_{L^2}^2 +\int_{[0,1]} |D c|$ with $\lambda=0.01$. Again we replace $\int_{[0,1]} |Dc|$ by
$\int_{[0,1]} \sqrt{|D c|^2 +\varepsilon}$ with $\varepsilon=10^{-6}$. The reconstruction is rather satisfactory and
the notorious oscillatory effect is efficiently removed.

\begin{figure}[ht!]
     \begin{center}
%
%        \subfigure[]
%        {%            \label{fig:first}
            \includegraphics[width=1\textwidth, height= 4in]{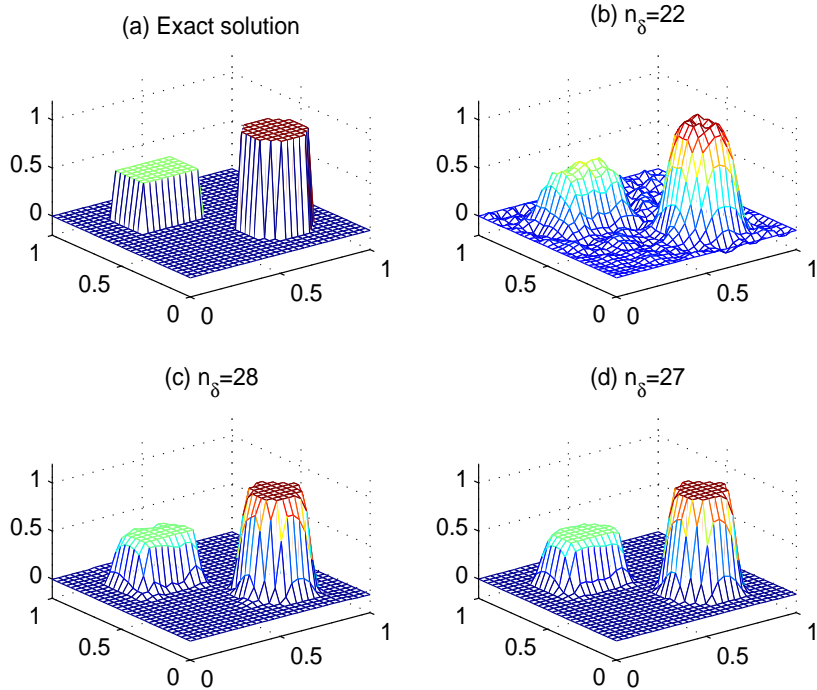} %{exact(60).eps}
%        }%
%        \subfigure[]
%        {%           \label{fig:second}
%           \includegraphics[width=0.44\textwidth, height= 2in]{L2_30.eps} %{L2(60-0.006-24--0.0062-5.5967).eps}
%        }\\ %  ------- End of the first row ----------------------%
%        \subfigure[]
%        {%            \label{fig:fourth}
%            \includegraphics[width=0.44\textwidth]{IRGN30(iter=27).eps} %{BV(60-0.006-30--0.0061-5.3129).eps}
%        }%
%         \subfigure[]
%        {%            \label{fig:fourth}
%            \includegraphics[width=0.44\textwidth, height=2in]{IRGN30(iter=27,mu=1).eps} %{BV(60-0.006-30--0.0061-5.3129).eps}
%        }%
%
    \end{center}
    \caption{Numerical results for the two-dimensional problem in Example \ref{e6.1}: (a) exact solution;
    (b) $\Theta(c)=\|c\|_{L^2}^2$; (c) and (d) $\Theta(c) = \lambda \|c\|_{L^2}^2 +\int_\Omega |Dc|$ with $\lambda=0.01$
    and $\la=1.0$ respectively. %
             }%
     \label{fig6.3}
\end{figure}

In the second numerical experiment we consider the two dimensional problem with $\Omega=[0,1]\times[0,1]$.
The sought solution is
$$
c^\dag(x,y)=\left\{\begin{array}{lll}
1,  & \quad  \mbox{if } (x-0.3)^2+(y-0.7)^2\le 0.15^2,\\
0.5, &\quad  \mbox{if } (x,y)\in [0.6, 0.8]\times [0.2, 0.5],\\
0, & \quad \mbox{elsewhere}.
\end{array}\right.
$$
We assume that $u(c^\dag)=x+y$, $f=(x+y) c^\dag(x,y)$, and the boundary condition $g=(x+y)|_{\partial \Omega}$.
We add noise to $u(c^\dag)$ to produce a noisy data $u^\d$ satisfying $\|u^\d-u(c^\dag)\|_{L^2(\Omega)}=\d$ with
$\d=0.1\times 10^{-3}$. We take $\a_n=2^{-n}$ and use $u^\d$ to reconstruct $c^\dag$ by our method which is terminated
by Rule \ref{Rule1} with $\tau=1.05$. All partial differential equations involved are solved approximately by a finite
difference method by dividing $\Omega$ into $30\times 30$ small squares of equal size with the resulting linear system solved
by the Gauss--Seidel method. All optimization problems are solved by a restart conjugate gradient method.
We report the computational results in Figure \ref{fig6.3}. In (a) we plot the the exact solution $c^\dag$, in (b)
we plot the computational result corresponding to $\Theta(c)=\|c\|_{L^2}^2$, and in (c) and (d) we plot the computational results
corresponding to $\Theta(c)=\lambda\|c\|_{L^2}^2 +\int_\Omega |D c|$ with $\lambda=0.01$ and $\la=1$ respectively.
We replace $\int_\Omega |Dc|$ by $\int_\Omega \sqrt{|D c|^2 +\varepsilon}$ with $\varepsilon=10^{-6}$ during computation.
It is clear that the reconstruction results in (c) and (d) are much better than the one in (b). Moreover,
the results in (c) and (d) indicate that the method is rather robust with respect to $\lambda$ since the change of
$\lambda$ does not affect the reconstruction much.

\end{example}

\begin{example}\label{e3.2}
Let $\Omega\subset {\mathbb R}^N$ be a bounded domain with Lipschitz boundary $\partial \Omega$. Consider the
identification of the diffusion parameter $a$ in
\begin{equation}\label{4.3.10}
\left\{\begin{array}{lll} -\mbox{div} (a\nabla u)=f \qquad
\mbox{in }
\Omega,\\
u=g \qquad \mbox{on } \partial \Omega
\end{array}\right.
\end{equation}
from the $L^2$ measurement of $u$, where $f\in H^{-1}(\Omega)$ and $g \in H^{1/2}(\partial \Omega)$ are given.
It is well-known that for $a\in L^\infty(\Omega)$ bounded below by a positive constant, (\ref{4.3.10}) has a
unique solution $u=u(a)\in H^1(\Omega)$. We assume that the sought solution $a^\dag$ is in $W^{1,p}(\Omega)$
for some $p>N$ with $a^\dag> \nu_0>0$ on $\Omega$ for some positive constant $\nu_0$. Thus this inverse problem
reduces to solving an equation of the form (\ref{1.1}) if we define $F$ as
\begin{align*}
F:  W^{1,p}(\Omega) \to L^2(\Omega), \qquad  F(a):=u(a)
\end{align*}
with
$$
D(F):=\left\{a\in W^{1,p}(\Omega): a\ge \nu_0 \mbox{ on } \Omega \right\}.
$$
Since $W^{1,p}(\Omega)$ embeds into $L^\infty(\Omega)$, the operator $F$ is well-defined.

This is the inverse groundwater filtration problem corresponding
to the steady state case studied in \cite{H97} in which it has
been shown that $F$ is Fr\'{e}chet differentiable and there holds
\begin{equation}\label{3.3}
\|F(\tilde{a})-F(a)-F'(a) (\tilde{a}-a)\|_{L^2}\lesssim
\|\tilde{a}-a\|_{W^{1,p}} \|F(\tilde{a})-F(a)\|_{L^2}
\end{equation}
for all $\tilde{a}, a\in B_\rho(a^\dag)$, where $B_\rho(a^\dag)$ denotes the ball in $W^{1,p}(\Omega)$
of radius $\rho$ around $a^\dag$.

We will follow the technique in \cite{H97} to show Assumption \ref{A1}(d).
For $\tilde{a}, a\in B_\rho(a^\dag)$ and $h\in W^{1,p}(\Omega)$ we set
\begin{equation}\label{3.3.5}
u=u(a), \quad \tilde{u}=u(\tilde{a}), \quad u'=F'(a) h, \quad
\tilde{u}'=F'(\tilde{a}) h.
\end{equation}
Recall that $u'$ is the weak solution of the boundary value
problem
\begin{align*}
\left\{\begin{array}{lll}  -\mbox{div} (a \nabla u')= \mbox{div} (h \nabla u) \quad
\mbox{in } \Omega,\\
u'=0 \qquad \mbox{on } \partial \Omega.
\end{array} \right.
\end{align*}
The same is true for $\tilde{u}'$. Therefore
\begin{align*}
\left\{\begin{array}{lll}
-\mbox{div}(\tilde{a} \nabla (\tilde{u}'-u'))=\mbox{div} (h
\nabla (\tilde{u}-u))+ \mbox{div} ((\tilde{a}-a)\nabla u') \qquad
\mbox{in } \Omega,\\
\tilde{u}'-u'=0 \qquad \mbox{ on } \partial \Omega.
\end{array}\right.
\end{align*}
Since the operator $A(\tilde{a}): V:=H_0^1\cap H^2(\Omega) \to
L^2(\Omega)$ defined by $A(\tilde{a}) w=-\mbox{div} (\tilde{a}
\nabla w)$ can be extended as an isomorphism $A(\tilde{a}):
L^2(\Omega) \to V'$ so that $A(\tilde{a})^{-1}: V'\to L^2(\Omega)$
is uniformly bounded around $a^\dag$, where $V'$ denotes the
anti-dual of $V$ with respect to the bilinear form (\ref{bilinear}),
%$$
%\int_\Omega \varphi \psi dx,
%$$
from the above equation we then have
\begin{equation}\label{3.4}
\|\tilde{u}'-u'\|_{L^2} \lesssim \|\mbox{div} ((\tilde{a}-a)
\nabla u')\|_{V'} + \|\mbox{div} (h \nabla (\tilde{u}-u))\|_{V'}.
\end{equation}

In order to proceed further, note that for $h\in W^{1,p}(\Omega)$,
$\varphi\in H_0^1(\Omega)$ and $\psi\in V$, we have
\begin{align*}
\int_\Omega \mbox{div}( h \nabla \varphi) \psi dx &= \int_\Omega
\varphi \mbox{div} (h \nabla \psi) dx
\le \|\varphi\|_{L^2} \|\mbox{div}( h \nabla \psi)\|_{L^2}.
\end{align*}
Recall the embedding $W^{1,p}(\Omega) \hookrightarrow L^\infty (\Omega)$ for $p>N$
and the embedding $H^1(\Omega) \hookrightarrow L^q(\Omega)$ for all $q\le 2N/(N-2)$.
Since $p>N$ implies $2p/(p-2)<2N/(N-2)$, we have
\begin{align*}
\|\mbox{div}( h \nabla \psi)\|_{L^2} & \le \|h \Delta \psi\|_{L^2}
+\|\nabla h \cdot \nabla \psi\|_{L^2}\\
&\le \|h\|_{L^\infty} \|\Delta \psi\|_{L^2} + \|\nabla h\|_{L^p}
\|\nabla \psi \|_{L^{\frac{2p}{p-2}}}\\
&\lesssim \|h\|_{W^{1,p}} \|\psi\|_V.
\end{align*}
Therefore, for all $\psi\in V$,
$$
\int_\Omega \mbox{div} (h \nabla \varphi) \psi dx \lesssim
\|h\|_{W^{1,p}} \|\varphi\|_{L^2} \|\psi\|_V
$$
which implies that
$$
\|\mbox{div} (h \nabla \varphi)\|_{V'} \lesssim \|h\|_{W^{1,p}} \|\varphi\|_{L^2}.
$$
Applying this inequality to estimate the two terms on the right
hand side of (\ref{3.4}), we obtain
\begin{equation}\label{3.5}
\|(F'(\tilde{a})-F'(a)) h\|_{L^2} \lesssim \|\tilde{a}-a\|_{W^{1,p}}
\|F'(a) h\|_{L^2} + \|h\|_{W^{1,p}} \|F(\tilde{a})-F(a)\|_{L^2}
\end{equation}
for all $h\in H^2(\Omega)$ and $\tilde{a}, a\in B_\rho(a^\dag)$. From (\ref{3.3}) it follows
$\|F(\tilde{a})-F(a)\|_{L^2}\le 2 \|F'(a)(\tilde{a}-a)\|_{L^2}$ for $\tilde{a}, a\in B_\rho(a^\dag)$
by shrinking the ball $B_\rho(a^\dag)$ if necessary. This together with (\ref{3.5})
verifies Assumption \ref{A1}(d).

In order to reconstruct $a^\dag$, we pick an initial guess $a_0\in W^{1,p}(\Omega)$
and take the function
$$
\Theta(a):=\int_\Omega \left(|a-a_0|^p +|\nabla (a-a_0)|^p \right) dx
$$
which is known to be $\max\{p,2\}$-convex in $W^{1,p}(\Omega)$. Observing that $\xi_0:=0\in \p \Theta(a_0)$
and thus $D_{\xi_0} \Theta(a, a_0)=\Theta(a)$. Therefore, for one-dimensional problem, i.e. $N=1$, we may take
$1<p\le 2$ and define $a_{n+1}^\d$ as the minimizer of the convex functional
\begin{equation}\label{min6.1}
\int_\Omega \left|u^\d-F(a_n^\d) -F'(a_n^\d) (a-a_n^\d)\right|^2 dx
+\a_n \int_\Omega \left(|a-a_0|^p +|\nabla (a-a_0)|^p\right) dx
\end{equation}
over $W^{1,p}(\Omega)$. If $n_\d$ denotes the integer determined by either Rule \ref{Rule1}, \ref{Rule2}, or \ref{Rule3}
with $\tau>1$, we have from Theorem \ref{T5.2} that $\|a_{n_\d}^\d-a^\dag\|_{W^{1,p}}\rightarrow 0$ as $\d\rightarrow 0$. For higher
dimensional problem, i.e. $N\ge 2$, we have $2\le N<p<\infty$. We may define $a_{n+1}^\d$ as the minimizer of the
convex functional
$$
\left(\int_\Omega \left|u^\d-F(a_n^\d) -F'(a_n^\d) (a-a_n^\d)\right|^2 dx\right)^{p/2}
+\a_n \int_\Omega \left(|a-a_0|^p +|\nabla (a-a_0)|^p\right) dx
$$
over $W^{1,p}(\Omega)$. It then follows from Theorem \ref{T4.5.1} that $a_{n_\d}^\d$ converges to $a^\dag$ weakly
in $W^{1,p}(\Omega)$. Since $W^{1,p}(\Omega)$ can be compactly embedded into $L^\infty(\Omega)$, we have
$\|a_{n_\d}^\d-a^\dag\|_{L^\infty(\Omega)}\rightarrow 0$ as $\d\rightarrow 0$.

\begin{figure}[ht!]
     \begin{center}
%       \subfigure[]
%        {%            \label{fig:first}
            \includegraphics[width=1\textwidth, height=2.6in]{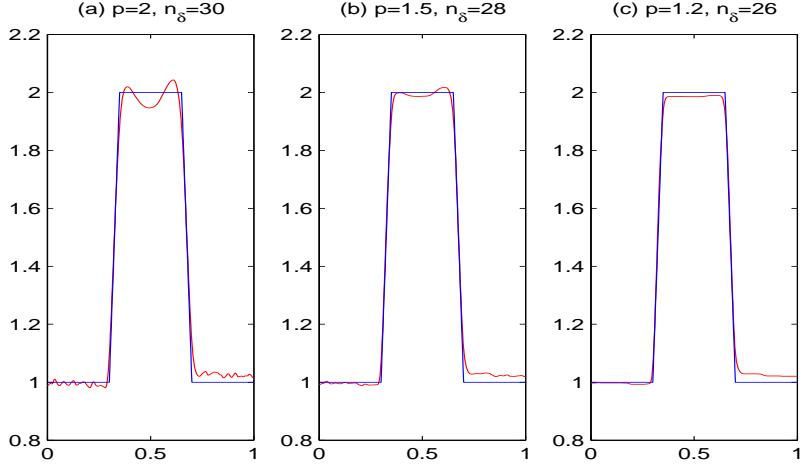}
        %    \includegraphics[width=0.45\textwidth, height=2.2in]{p=2(400-0.001-1.05).eps}
%        }%
%        \subfigure[]
%        {%           \label{fig:second}
%           \includegraphics[width=0.32\textwidth, height=2.1in]{p=1.5(400-0.004.new).eps}
%        %   \includegraphics[width=0.45\textwidth, height= 2.2in]{p=1.2(400-0.001-tau=1.02).eps}
%        }%\\ %  ------- End of the first row ----------------------%
%        \subfigure[]
%        {%            \label{fig:third}
%            \includegraphics[width=0.32\textwidth, height=2.1in]{p=1.2(400-0.004.new).eps}
%        }%
%        %\subfigure[Caption of Fourth Figure]
%        %{%
%        %    \label{fig:fourth}
        %    \includegraphics[width=0.4\textwidth]{p=1.1(200-0.002).eps}
        %}%
%
    \end{center}
    \caption{Numerical results for one-dimensional problem in Example \ref{e3.2} with different
    values of $p$ in (\ref{min6.1}), where $n_\d$ denotes the integer determined by Rule \ref{Rule1}
    with $\tau=1.05$.%
             }%
   \label{f6.1}
\end{figure}

In the following we present a numerical test for the one-dimensional problem over the interval $\Omega=[0,1]$ with
boundary data $u(0)=u(1)=0$ and inhomogeneous term
$$
f(t)=\left\{\begin{array}{lll}
-2, & \quad   0\le t\le 0.3,\\
30-80t, & \quad  0.3< t< 0.35,\\
-4, & \quad 0.35\le t\le 0.65,\\
80t-50, & \quad  0.65<t<0.7,\\
-2, & \quad  0.7\le t\le 1.
\end{array}\right.
$$
The function to be reconstructed is
$$
a^\dag(t)=\left\{\begin{array}{lll}
1, & \quad   0\le t\le 0.3,\\
20t-5, & \quad  0.3< t< 0.35,\\
2, & \quad 0.35\le t\le 0.65,\\
15-20t, & \quad  0.65<t<0.7,\\
1, & \quad  0.7\le t\le 1.
\end{array}\right.
$$
Observing that $u(a^\dag)=t(t-1)$. We add noise to $u(a^\dag)$ to produce a noisy data $u^\d$ satisfying
$\|u^\d-u(a^\dag)\|_{L^2[0,1]}=\d$ with given noise level $\d>0$ and use $u^\d$ to reconstruct $a^\dag$
by our method in which each iterate is defined by the convex optimization problem (\ref{min6.1})
with $1<p\le 2$. We take the noise level $\d=0.1\times 10^{-3}$ and the initial guess $a_0=1$. We also take
the sequence $\{\a_n\}$ to be $\a_n=2^{-n}$. During the computation, all differential equations are solved
approximately by the finite element method on the subspace of piecewise linear splines on a uniform grid
with subinterval length $1/400$, and the optimization problems (\ref{min6.1}) are solved by a restart
conjugate gradient method (\cite{SY2006}). In Figure \ref{f6.1} we report the numerical results of our method for several
different values of $p\in (1,2]$ with the iteration terminated by Rule \ref{Rule1} with $\tau=1.05$. It shows
that the method works well for these selected values of $p$. Moreover, by decreasing $p$ from $2$ to $1.2$,
the reconstruction result becomes better when the sought solution has corners and
constant parts. However, one has to pay the price of more computational time for smaller $p$.

\end{example}

\begin{example}\label{e3.4}
We consider the transient case of the inverse groundwater
filtration problem which identifies the material coefficient $a$
in
\begin{align}\label{3.10}
\left\{ \begin{array}{lll}
\frac{\partial u}{\partial t}- \mbox{div} (a \nabla u) =f \quad
\mbox{ in } \Omega \times (0, T], \\
u=\varphi \qquad \mbox{ on } \partial \Omega \times (0, T], \\
u=u_0 \qquad \mbox{ on } \Omega \times \{t=0\}
\end{array} \right.
\end{align}
from the $L^2(0, T; L^2(\Omega))$-measurement of $u$, where $\Omega \subset {\mathbb R}^N$ is a bounded
domain with Lipschitz boundary,  $f\in L^2(0, T; H^{-1}(\Omega))$, $\varphi\in L^2(0, T; H^{1/2}(\partial \Omega))$
and $u_0\in H^1(\Omega)$. It is well-known that (\ref{3.10}) has a unique solution $u:=u(a)\in L^2(0, T; H^1(\Omega))$
for each $a\in L^\infty(\Omega)$ bounded from below by a positive constant.
We assume that the sought solution $a^\dag$ is in $W^{1,p}(\Omega)$ with $p>N$ satisfying
$a^\dag>\nu_0>0$ on $\Omega$. This inverse problem reduces to solving (\ref{1.1}) if we define the nonlinear operator
$F: W^{1,p}(\Omega) \to L^2(0, T; L^2(\Omega))$ by $F(a):=u(a)$ with the same domain $D(F)$ as in Example \ref{e3.2}.
It is known that $F$ is Fr\'{e}chet differentiable, and, for $a\in D(F)$ and $h\in W^{1,p}(\Omega)$, $u':=F'(a) h$ satisfies
\begin{align*}
\left\{ \begin{array}{lll}
\frac{\partial u'}{\partial t} -\mbox{div} (a \nabla
u')=\mbox{div} (h \nabla u) \qquad \mbox{in } \Omega\times (0, T],\\
u'=0 \qquad \mbox{on } \partial \Omega \times (0, T],\\
u'=0 \qquad \mbox{on } \Omega \times \{t=0\}.
\end{array} \right.
\end{align*}
Using the same notations as in (\ref{3.3.5}) we have for
$w:=\tilde{u}'-u'$ that
\begin{align*}
\left\{\begin{array}{lll}
\frac{\partial w}{\partial t}- \mbox{div}(\tilde{a} \nabla w)
=\mbox{div} ((\tilde{a}-a) \nabla u') +\mbox{div}( h\nabla
(\tilde{u}-u)) \quad \mbox{in } \Omega\times (0, T],\\
w=0 \qquad \mbox{on } (\partial \Omega \times (0, T]) \times
(\Omega\times \{t=0\}).
\end{array} \right.
\end{align*}
From the well-known facts on parabolic equations (see \cite{W}) it
follows that
$$
\|w\|_{L^2(0, T; L^2(\Omega))} \lesssim \|\mbox{div}
((\tilde{a}-a) \nabla u')\|_{L^2(0, T; V')} + \|\mbox{div} (h
\nabla (\tilde{u}-u) )\|_{L^2(0, T; V')}
$$
for all $\tilde{a}$ and $a$ in a neighborhood around $a^\dag$. By
employing the corresponding estimates derived in Example
\ref{e3.2} we obtain
\begin{align}\label{3.11}
\|w\|_{L^2(0, T; L^2(\Omega))} & \lesssim
\|\tilde{a}-a\|_{W^{1,p}(\Omega)} \|u'\|_{L^2(0, T; L^2(\Omega))} \nonumber\\
& \quad \, + \|h\|_{W^{1,p}(\Omega)} \|\tilde{u}- u\|_{L^2(0, T; L^2(\Omega))}.
\end{align}
From \cite[Theorem 3.2]{H97} we know that
$$
\|\tilde{u}-u\|_{L^2(0, T; L^2(\Omega))} \lesssim \|F'(a)
(\tilde{a} -a)\|_{L^2(0, T; L^2(\Omega))}.
$$
This together with (\ref{3.11}) implies Assumption \ref{A1}(c). Therefore, our method is applicable to
this example, and we can formulate the procedure to reconstruct $a^\dag$ similarly as is done in Example \ref{e3.2}.
\end{example}

\vskip 0.3cm
\noindent
{\bf Acknowledgements}  Q Jin is partly supported by the grant DE120101707 of Australian Research Council,
and M Zhong is partly supported by the National Natural Science Foundation of China (No.11101093).

\end{document}